\newtheorem{theorem}{Theorem}[section] 
\newtheorem{lemma}[theorem]{Lemma}
\newtheorem{proposition}[theorem]{Proposition}
\newtheorem{definition}[theorem]{Definition}
\newtheorem{remark}[theorem]{Remark}
\numberwithin{equation}{section}
\newtheorem{hypothesis}[theorem]{Hypotheses}
\renewcommand{\d}{{\mathrm d}}
\def\R{\mathbb{R}}
\def\N{\mathbb{N}}
\def\M{\mathbb{M}}
\def\P{\mathscr{P}}
\def\H{\mathbb{H}}
\def\V{\mathbb{V}}
\def\D{\mathbb{D}}
\def\Mdue{\mathscr{M}_2^M(\mathbb{M}^n)}
\def\cW{\mathcal{W}_2}
\begin{document}
\title[Wasserstein stability of porous medium-type equations]{Wasserstein stability of porous medium-type equations \\ on manifolds with Ricci curvature bounded below}
\author{Nicol\`o De Ponti, Matteo Muratori, Carlo Orrieri}

\address{Nicol\`o De Ponti: Scuola Internazionale Superiore di Studi Avanzati (SISSA), Via Bonomea 265, 34136 Trieste (Italy)}

\email{ndeponti@sissa.it}

\address{Matteo Muratori: Dipartimento di Matematica, Politecnico di Milano, Piazza Leonardo Da Vinci 32, 20133 Milano (Italy)}

\email{matteo.muratori@polimi.it}

\address{Carlo Orrieri: Dipartimento di Matematica, Universit\`a degli Studi di Pavia, Via Ferrata 5, 27100 Pavia (Italy)}

\email{carlo.orrieri@unipv.it}

\begin{abstract}
Given a complete, connected Riemannian manifold $ \mathbb{M}^n $ with Ricci curvature bounded from below, we discuss the stability of the solutions of a porous medium-type equation with respect to the 2-Wasserstein distance.
We produce (sharp) stability estimates under negative curvature bounds, which to some extent generalize well-known results by Sturm \cite{Stu} and Otto-Westdickenberg \cite{OW}.
The strategy of the proof mainly relies on a quantitative $L^1$--$L^\infty$ smoothing property of the equation considered, combined with the {Hamiltonian} approach developed by Ambrosio, Mondino and Savar\'e in a {metric-measure} setting \cite{AMS}.  
\end{abstract}

\maketitle

\tableofcontents

\section{Introduction}

In this paper we investigate the Cauchy problem for the following \textit{porous medium-type} equation:
\begin{equation}\label{pme_intro}
\begin{cases}
 \partial_t\rho = \Delta P(\rho) & \text{in } \mathbb{M}^n \times \mathbb{R}^+ \, , \\
\rho(\cdot,0) = \mu_0 \ge 0 & \text{in } \mathbb{M}^n \times \{ 0 \} \, ,
\end{cases}
\end{equation} 
where $ \mu_0 $ {is a suitable finite, nonnegative Borel measure} and $ P $ is a nonlinearity whose model case corresponds to $ P(\rho) = \rho^m $ with $ m>1 $, namely the \emph{porous medium equation} (PME for short). 
Here $\mathbb{M}^n$ is a smooth, complete, connected, $n$-dimensional ($ n \ge 2 $) Riemannian manifold endowed with the standard Riemannian distance $ \mathsf{d} $ and the Riemannian volume measure $\mathcal{V}$. 
We denote by $ \Delta $ the Laplace-Beltrami operator on $ \M^n $, which hereafter for simplicity will mostly be referred to as the ``Laplacian''. {The initial datum $ \mu_0 $ is assumed to belong to $\mathscr{M}_2^M(\mathbb{M}^n) $}, namely the space of finite, nonnegative Borel measures on $ \mathbb{M}^n $ having mass $ M $ and finite second moment, that is
$$
\mu_0(\mathbb{M}^n) = M \qquad \text{and} \qquad  \int_{\mathbb{M}^n} \mathsf{d}(x,o)^2 \, \d\mu_0(x) < \infty  
$$
for some (hence all) $ o \in \mathbb{M}^n $. 
As is well known, one can make $ \mathscr{M}_2^M(\mathbb{M}^n) $ a complete metric space by endowing it with the $2$-Wasserstein distance, 
which we will denote by  $\mathcal{W}_2$ (see Subsection \ref{W-S} for more details).

\smallskip

Our main focus is on a stability property of the evolution \eqref{pme_intro} with respect to $\mathcal{W}_2$, when $\mathbb{M}^n$ is possibly noncompact (with infinite volume) and its Ricci curvature is merely bounded from below.
This is strongly motivated by the results obtained by Sturm \cite{Stu} and Otto-Westdickenberg \cite{OW} under the nonnegativity assumption of the Ricci curvature, which we recall below. {We point out that by ``stability'' we mean the possibility to control the $ \mathcal{W}_2 $-distance between two solutions of \eqref{pme_intro}, along the flow, in terms of the $ \mathcal{W}_2 $-distance of the corresponding initial data. We will refer to this property as ``contraction'' when the $ \mathcal{W}_2 $-distance of the initial data cannot be increased by the flow.}
  
To attack the problem we have at our disposal at least two different points of view.
On the one hand, one can profit from the recent developments in the theory of nonlinear diffusion equations in non-Euclidean setting, where the connection with the geometry of the underlying structure is taken into account. 
On the other hand, the theory of optimal transportation can be employed to lift the problem to the space of measures endowed with the Wasserstein distance. The results obtained herein actually take advantage of the combination of techniques borrowed from both the two approaches.

\smallskip

For what concerns the analysis of nonlinear diffusion equations on Riemannian manifolds, we mention the following recent contributions.  
In \cite{BGV} the authors consider well-posedness and finite-time extinction phenomena for the fast-diffusion equation (i.e.\ \eqref{pme_intro} with $ P(\rho)=\rho^m $ for $ m \in (0,1)  $) on Cartan-Hadamard manifolds, namely simply connected, complete Riemannian manifolds with nonpositive sectional curvature, for sufficiently integrable initial data.
In the same geometric setting, in \cite{GMPrm} the porous medium equation is investigated when initial data are finite Borel measures, by means of potential techniques. 
Still in the Cartan-Hadamard setting and for porous medium equation, in \cite{GMPpures} the authors study well-posedness and blow-up phenomena for initial data possibly growing at infinity.  
The asymptotic behaviour for large times is addressed in \cite{GMV1, GMV2}, complementing some results previously obtained in \cite{VazHyp} in the hyperbolic space $ \H^n $. {Surprisingly, not much is known on the asymptotics of the \emph{heat equation} in $ \H^n $: we refer to \cite{VazHeat} for an account of the state of the art along with some further progress}. 

\smallskip

With regards to the theory of optimal transport, after the seminal work of Otto et al.~\cite{JKO,O} a lot of interest has been drawn in the description of certain PDEs as gradient flows in the space of probability measures endowed with the quadratic Wasserstein distance.
In fact, when associated with a convex structure, such a formulation turns out to be extremely useful to obtain existence and stability results for a large class of PDEs.
To that purpose, a very general theory of gradient flows of geodesically-convex functionals in metric spaces has rigorously been developed by Ambrosio, Gigli and Savar\'e: we refer to the monograph \cite{AGS} for a comprehensive treatment of this topic. 

Let us first briefly comment on the analysis of the heat equation (at first in $ \R^n $), for which the picture is by now quite clear.
By setting 
$$
\mathsf{E}(\mu) : =
\begin{cases} 
\int_{\R^n} \rho \log \rho \, \d x & \text{if } \d \mu = \rho(x) \d x \, , \\
+ \infty & \text{elsewhere} \, ,
\end{cases}
$$ 
that is the so-called \emph{relative entropy}, and denoting by $\mathscr{P}_2(\R^n)$ the space of probability measures with finite second moment, the following holds: for every initial datum $\mu_0 \in \mathscr{P}_2(\R^n)$ there exists a unique gradient flow of $ \mathsf{E}$ in $ \left( \mathscr{P}_2(\R^n) , \mathcal{W}_2 \right) $ in the sense of Evolution Variational Inequalities (EVI), whose trajectories coincide with the corresponding solution of the heat equation. 
The Wasserstein contraction property of the solutions is then a consequence of the \emph{displacement convexity} of $\mathsf{E}$ in $ \left( \mathscr{P}_2(\R^n) , \mathcal{W}_2 \right) $.
This result was further extended to the Riemannian setting in \cite{VRS}, see also \cite{Vil, E}, upon taking into account the Ricci curvature of the manifold $\mathbb{M}^n$: it is shown that the bound $\mathrm{Ric} \ge \lambda$ ($ \lambda \in \R $) is equivalent to both the $\lambda$-convexity of the relative entropy and the following stability property of the generated gradient flow:
\begin{equation*}
\cW(\rho(t), \hat \rho(t)) \leq e^{-\lambda t} \, \cW(\mu_0, \hat \mu_0) \qquad \forall t \geq 0 \, , 
\end{equation*}  
where the densities $ \rho $ and $ \hat{\rho} $ represent the solutions of the heat equation on $ \M^n $ starting from $ \mu_0 $ and $ \hat{\mu}_0 $, respectively. An equivalence of this form is still missing in the context of nonlinear diffusion, where only partial results can be found in the literature.

As concerns the classical porous medium equation, a gradient flow interpretation was firstly treated in \cite{O}.
Then, numerous results have subsequently been obtained in the Euclidean setting even for more general PDEs. For instance, in \cite{CMV}  the authors quantify the Wasserstein contraction for diffusion equations that may also exhibit a nonlocal structure.  
In the one-dimensional case, contraction estimates for granular-media models are obtained in \cite{LT}, by exploiting the explicit formulation of the Wasserstein distance.  
Regularizing effects and decay estimates for porous medium evolutions (with a nonlocal pressure) can be obtained by means of the minimizing movement approximation scheme in $ \left( \mathscr{P}_2(\R^n) , \mathcal{W}_2 \right) $, as is shown in \cite{LiMS}.
Finally, we refer to \cite{BC} for a simple proof of the equivalence between the contraction of the flow and the convexity condition, in which the gradient-flow structure of the problem is in fact not exploited. 
A related argument (coming from the probabilistic coupling method) can also be found in the recent manuscript \cite{FP}.

As already mentioned above, for nonlinear diffusions the passage from the Euclidean to the Riemannian setting is not straightforward.
The first contribution in this direction was given by Sturm in \cite{Stu}, where the equivalence between the geodesic convexity of the free energy and the curvature-dimension conditions is shown.
In this setting, stability estimates for the PME on \emph{nonnegatively} curved manifolds are still a consequence of the geodesic convexity of the free energy, thus complementing, when $\mathrm{Ric} \geq 0$, the results of \cite{VRS} in the linear case.
More precisely, the gradient-flow structure of the PME on $\mathscr{P}_2(\M^n)$ can be derived by introducing the {free energy}
\begin{equation}\label{free}
\mathsf{\tilde E}(\mu) : =
\begin{cases} 
\int_{\M^n} U(\rho) \, \d \mathcal{V} & \text{if } \d \mu = \rho \, \d \mathcal{V} \, , \\
+ \infty & \text{elsewhere} \, ,
\end{cases}
\end{equation}
where $U$ is linked to the nonlinearity of the equation through the relation $P(\rho) = \rho U'(\rho)  - U(\rho)$.
When $\M^n$ satisfies $\mathrm{Ric}_x \geq 0$ for every $x \in \M^n$, it is shown that under the additional assumption $ \rho U'(\rho) \geq \left(1 - 1/n\right) U(\rho) $, the following contraction property holds along the flow:
\begin{equation*}
\frac{\d}{\d t} \cW(\rho(t), \hat \rho(t)) \le 0 \qquad \forall t \ge 0 \, .
\end{equation*}  
Furthermore, the conditions on $U$ and Ricci turn out to be also necessary for the contraction to hold, and they are equivalent to the displacement convexity of the functional $\mathsf{\tilde E}$:
\begin{equation*}
\mathsf{\tilde E}(\mu^s) \leq (1-s)\mathsf{\tilde E}\!\left(\mu^0\right) + s \mathsf{\tilde E}\!\left(\mu^1\right)
\end{equation*} 
for every $2$-Wasserstein geodesic $\lbrace \mu^s \rbrace_{0 \leq s \le 1} \subset \left( \mathscr{P}_2(\M^n) , \cW \right) $.

Let us recall that the above argument was subsequently revisited in the compact setting by Otto and Westdickenberg in \cite{OW} through the so-called \emph{Eulerian} calculus. Recent developments have also been obtained in \cite{OT} in the context of weighted Riemannian and Finsler manifolds.

\medskip

Our main goal is to obtain stability estimates for the porous medium-type evolution \eqref{pme_intro} without imposing the nonnegativity of the Ricci curvature. To that purpose, we need to introduce some key hypotheses both on the manifold and on the form of the nonlinearity we consider.

First of all, we assume that $\mathbb{M}^{n}$ ($ n \ge 3 $) supports the following \emph{Sobolev-type} inequality:
\begin{equation}\label{Sob-intro}
\left\| f \right\|_{L^{2^\star}\!(\mathbb{M}^n)} \le C_S \left( \left\| \nabla f \right\|_{L^{2}(\mathbb{M}^n)} + \left\| f \right\|_{L^{2}(\mathbb{M}^n)} \right) \quad \forall f \in W^{1,2}(\mathbb{M}^n) \, , \qquad 2^\star := \frac{2n}{n-2} \, ,
\end{equation}
and has {Ricci curvature} bounded from below, that is
\begin{equation}\label{K-Heb}
\mathrm{Ric}_x \ge -K \qquad \forall x \in \mathbb{M}^n 
\end{equation}
for some constant $K \ge 0 $, in the sense of quadratic forms. 
Note that 
\eqref{Sob-intro} is guaranteed on any complete, $ n $-dimensional ($ n \ge 3 $) Riemannian manifold satisfying \eqref{K-Heb} along with the \emph{noncollapse} condition, see Section \ref{sol-gen}. The $2$-dimensional case can also be dealt with by means of minor modifications: we refer to Remark \ref{2d}. For what concerns the nonlinearity, we assume $ P $ to be a $C^1([0,+\infty))$, strictly increasing function satisfying $P(0) = 0$ and the two-sided bound
\begin{equation}\label{power-assumption}
c_0 \, m \, \rho^{m-1} \le P^\prime (\rho) \le c_1 \, m \, \rho^{m-1}  \qquad \forall \rho \ge 0 \,  ,
\end{equation}
for some $ c_1 \ge c_0 >0 $ and $ m>1 $. In fact the requirement $m >1$ corresponds to the so-called \emph{slow diffusion} regime.
Furthermore, it will also be crucial to ask that $ P $ complies with the \emph{McCann} condition
\begin{equation}\label{fund-ineq-intro}  
\rho \, P^\prime\!(\rho) - \left(1 - \tfrac{1}{n}\right)P(\rho) \ge 0  \qquad \forall \rho \ge 0 \, .
\end{equation} 
Let us observe that the \textit{pure} porous medium nonlinearity, namely $ P(\rho)=\rho^m $, obviously complies with \eqref{power-assumption} and \eqref{fund-ineq-intro}.

In our main result, that is Theorem \ref{main-result}, we show that under the above conditions problem \eqref{pme_intro} admits a unique solution in an appropriate weak sense (see again Section \ref{sol-gen} for more details). Moreover, for any pair of initial data $\mu_0,\hat{\mu}_0 \in \mathscr{M}_2^M(\M^n) $, the corresponding solutions $\mu(t)=\rho(t)\mathcal{V} $ and $ \hat{\mu}(t)=\hat{\rho}(t)\mathcal{V}$ have a (bounded) density for every $t>0$ and satisfy the following stability estimate with respect to the 2-Wasserstein distance:
\begin{equation}\label{contr_intro}
\mathcal{W}_2 \!\left(\rho(t),\hat{\rho}(t) \right) \leq \exp\!\left\{ K \, c_1 \, \mathfrak{C}_{m} \left[ \left(tM^{m-1}\right)^{\frac{2}{2+n(m-1)}}  \vee \left(tM^{m-1}\right) \right] \right\} \mathcal{W}_2\! \left( \mu_0 ,\hat{\mu}_0 \right) \qquad \forall t > 0 \, ,
\end{equation}
where a semi-explicit form of the constant $\mathfrak{C}_{m}>0$ is also given. Estimate \eqref{contr_intro} seems to be new in the context of diffusion equations on manifolds, due to the presence of a nonlinear time power in the exponent. Moreover, in Theorem \ref{optimal} we exhibit a nontrivial example that shows that our estimate is indeed optimal (for small times).
Precisely, in the $n$-dimensional hyperbolic space $\mathbb{H}^n_K$ of constant sectional curvature $-K$ (thus of Ricci curvature $ -(n-1)K $), given two close enough points $x,y \in \mathbb{H}^n_K$  and the associated Dirac measures $\mu_0= M \delta_x $, $ \hat{\mu}_0=M\delta_y$, there holds
\begin{equation}\label{opt_intro}
\mathcal{W}_2 \!\left(\rho(t),\hat{\rho}(t) \right) \ge \left[ 1 + K \, \kappa \left( t M^{m-1} \right)^{\frac{2}{2+n(m-1)}}  \right] \mathcal{W}_2\! \left( \mu_0 ,\hat{\mu}_0 \right)  \qquad \forall t \in (0,\overline{t}) \, ,
\end{equation}
for a suitable constant $\kappa = \kappa(n,m)>0$ and a sufficiently small time $\overline{t} >0$. As a consequence, we can deduce that the PME \emph{is not a gradient flow} with respect to $\cW$ on $\H_K^n$, or more generally on negatively-curved manifolds, {in the sense of \emph{Evolution Variational Inequalities}}.
We refer to Remark \ref{nonlin-long} for further details.

\subsection{Strategy}
The strategy we adopt has its roots in the so called \textit{Eulerian} approach employed in \cite{OW, DS} and subsequently in \cite{AMS}. 
Instead of relying on existence and smoothness of the optimal transport map, the main insight of the Eulerian approach is to directly work in the subspace of smooth densities and to take advantage of the Benamou-Brenier formulation of the Wasserstein distance.   
The basic idea is to link the contraction property of the Wasserstein distance to the monotonicity of the associated Lagrangian. Moreover, as is discussed in greater detail in \cite{AMS}, the contraction of the distance under the action of the flow is also equivalent to the monotonicity of the associated Hamiltonian functional (in the sense of Fenchel duality). 
Such equivalence turns out to be more convenient in the context of porous medium flows; we give here a flavor of the strategy, referring to Section \ref{sec:str} for a more complete discussion. Let us start by writing the $2$-Wasserstein distance as an action functional of the following form:
\begin{equation*}
\frac{1}{2}\cW(\rho_0, \hat\rho_0)  = \inf \left\lbrace \int_0^1 \mathcal{L}\!\left(\rho^s, \tfrac{\d}{\d s} \rho^s\right)  \d s: \ s \mapsto \rho^s \, \text{ with } \rho^0 = \rho_0 \, , \ \rho^1 = \hat \rho_0  \right\rbrace ,
\end{equation*} 
where 
\begin{equation*}
\mathcal{L}(\rho, w) = \frac{1}{2} \int_{\M^n} |\nabla \phi|^2 \rho\, \d \mathcal{V} \, , \qquad  -\mathrm{div}(\rho \nabla \phi) = w \quad \text{in } \mathbb{M}^n \, . 
\end{equation*}  
Rather than looking directly at the Lagrangian $\mathcal{L}$, we consider the Hamiltonian functional
\begin{equation*}
\mathcal{E}_\rho[\phi] := \frac{1}{2} \int_{\M^n} |\nabla \phi	|^2 \rho \, \d \mathcal{V} \, .
\end{equation*}
If $ \rho \equiv \rho(t) $ is a solution of \eqref{pme_intro} and $\phi \equiv \phi(t) $ is the solution of the corresponding \emph{linearized} backward flow given by $\frac{\d }{\d t} \phi = - P'(\rho) \Delta \phi$,
it is not difficult to check that, at least formally, there holds (see \cite[Example~2.4]{AMS}) 
\begin{equation*}
\frac{\d }{\d t} \mathcal{E}_{\rho(t)}[\phi(t)] =  \int_{\M^n} P(\rho(t)) \, \Gamma_2(\phi(t)) \, \d \mathcal{V}  + \int_{\M^n} \left[ \rho(t) P'(\rho(t)) - P(\rho(t)) \right] \left( \Delta \phi(t) \right)^2 \d \mathcal{V} \, , 
\end{equation*}
where $\Gamma_2$ is the iterated \emph{carr\'e du champ} operator, whose definition is provided in Subsection \ref{B-E}. 
By exploiting \eqref{fund-ineq-intro} and the Bakry-\'Emery formulation $\mathrm{BE}(0,n)$ of the curvature bound $\mathrm{Ric} \geq 0$ (we refer again to Subsection \ref{B-E}), one can deduce the monotonicity of the Hamiltonian along the flow, namely $\frac{\d }{\d t} \mathcal{E}_{\rho(t)}[\phi(t)] \geq 0$, which is a key step in order to prove the $2$-Wasserstein contraction property (see \cite[Proposition 2.1]{AMS} in a simplified framework).

\smallskip

However, in the present setting we are dealing with the more general case in which the Ricci curvature is merely bounded from below. 
As a consequence, by employing the Bakry-\'Emery formulation $\mathrm{BE}(-K,n)$, a priori we only have
\begin{equation*}
\frac12 \frac{\d}{\d t} \mathcal{E}_{\rho(t)} \! \left[ \phi(t) \right] \ge -K \, \int_{\mathbb{M}^n} |\nabla\phi(t)|^2 \, P\!\left( \rho(t) \right) \d\mathcal{V} \, .
\end{equation*}
In order to compare $\rho(t)$ with $P(\rho(t))$, and therefore to close the above differential inequality, the crucial idea is now to exploit a \emph{quantitative} $L^1(\M^n)$--$L^\infty(\M^n)$ smoothing estimate for weak energy solutions of \eqref{pme_intro}, see Proposition \ref{smooth-approx}. To that purpose, it is necessary to first understand problem \eqref{pme_intro} for more regular initial data, namely
\begin{equation}\label{pme-reg-intro}
\begin{cases}
 \partial_t\rho = \Delta P(\rho) & \text{in } \mathbb{M}^n \times \mathbb{R}^+ \, , \\
\rho(\cdot,0) = \rho_0 \ge 0 & \text{on } \mathbb{M}^n \times \{ 0 \} \, ,
\end{cases}
\end{equation}
where $ \rho_0 \in L^1(\mathbb{M}^n) \cap L^\infty(\mathbb{M}^n) $; in fact, it will also be essential to deal with a nondegenerate regularization of the equation, which will be addressed in detail in Sections \ref{basics-pme} and \ref{sec:str}. We point out that smoothing effects are a very important and well-established tool in the theory of a large class of nonlinear diffusion equations: we refer the reader e.g.~to the monograph \cite{V06}.
This way we are able to integrate the differential inequality to get the estimate
\begin{equation}\label{ham:exp_intro}
\mathcal{E}_{\rho(t)}[\phi(t)] \geq \exp\{-K\,C(t,m,n)\} \, \mathcal{E}_{\rho_0}[\phi(0)] \, ,
\end{equation}
where an explicit computation of $C(t,m,n)>0$ is available (see Lemma \ref{deriv-ham-chiusa}).
The final step consists of exploiting the dual formulation of the Wasserstein distance for suitable regular curves, and we refer to Subsection \ref{subsec: out} for a precise description of the strategy that allows one to pass from \eqref{ham:exp_intro} to the stability estimate \eqref{contr_intro}.

As for the optimality, we choose $\M^n$ as the hyperbolic space $\H_K^n$ of constant sectional curvature $-K$. 
The key ingredient to derive \eqref{opt_intro} is a delicate estimate on the Wasserstein distance between suitable radially-symmetric densities centered about two different (sufficiently close) points.
To that purpose, we take advantage of a result originally proved by Ollivier \cite{Ol} in the simpler case of uniform densities, combined with the behaviour for small times of Barenblatt solutions of the PME in $\H_K^n$, obtained in \cite{VazHyp}.
All the rigorous computations are carried out in Subsection \ref{opt-small}. 

\smallskip

Let us point out that the extension of the present results to a metric-measure setting appears not to be straightforward, mainly due to the PDE techniques we employ in Section \ref{basics-pme}. Indeed, the proof of the $ L^1$--$L^\infty$ smoothing estimate, which is a crucial ingredient of our strategy, is not directly applicable. The point is that we take advantage of a uniformly \emph{parabolic} regularization of problem \eqref{pme_intro} in smooth domains, whose analogue in the metric-measure framework is in principle not available (see Remark \ref{R}). Another key tool we use, in order to show that solutions starting from data in $ \mathscr{M}_2^M(\M^n) $ stay in $ \mathscr{M}_2^M(\M^n) $, is the so-called \emph{compact-support} property, that we establish again by pure PDE methods (see Proposition \ref{compact-support}). The counterpart of this result in metric-measure spaces should be investigated by a different approach.

\subsection{Notations}\label{sect:not}

Throughout, we will deal with a complete and connected Riemannian manifold $(\mathbb{M}^n, \mathfrak{g})$. In the sequel, for simplicity, we will omit the explicit dependence of the geometric quantities on the metric $\mathfrak{g}$. We denote by $\mathsf{d}$ the associated Riemannian distance and by $ \mathcal{V} $ the Riemannian volume measure. The former, with some abuse of notation, will also be used to denote distance between sets. The symbol $T_x\mathbb{M}^n$ will stand for the tangent space at $ x \in \mathbb{M}^n $, endowed with a scalar product $ \langle \cdot , \cdot \rangle $ that induces the norm $ |\cdot| $.

We define $\mathscr{M}(\mathbb{M}^n)$ as the space of finite, nonnegative  Borel measures over $(\mathbb{M}^n,\mathsf{d})$ and $\mathscr{M}^M(\mathbb{M}^n)$ as the space of measures $\mu\in \mathscr{M}(\mathbb{M}^n)$ such that $\mu(\mathbb{M}^n)=M>0$. If $\mu$ also has a finite second moment we write $\mu \in \mathscr{M}^M_2(\mathbb{M}^n)$, and we denote by $ \cW(\mu,\nu) $ the $ 2 $-Wasserstein distance between any two elements $ \mu,\nu \in  \mathscr{M}^M_2(\mathbb{M}^n)  $. If the measures have densities w.r.t.~$ \mathcal{V} $, say $ \rho_\mu $ and $ \rho_\nu $, we will often write $ \cW(\rho_\mu,\rho_\nu) $ in place of $ \cW(\mu,\nu) $. 

For simplicity's sake, in the following we use the notations $\H$, $\V$ and $\D$ for the Hilbert spaces 
\begin{equation}\label{spazi}
\H:= L^2(\mathbb{M}^n) \, , \qquad  \V:= W^{1,2}(\mathbb{M}^n) \, , \qquad  \mathbb{D}:=\{f\in \mathbb{V}: \, \Delta f \in \mathbb{H}\} \, , 
\end{equation}
with associated norms $ \| f \|_{\mathbb{V}}^2 := \| f \|_{\mathbb{H}}^2 + \| \nabla f \|_{\mathbb{H}}^2$ and $\|f\|^2_{\mathbb{D}}:=\|f\|^2_{\mathbb{V}}+\|\Delta f\|^2_{\mathbb{H}}$. It is useful to recall that, by an elementary cut-off argument (in case $ \mathbb{M}^n $ is noncompact), it can be shown that $\mathbb{V} $ coincides with $ W^{1,2}_0(\mathbb{M}^n) $, where the latter symbol denotes the closure of $ C_c^\infty(\mathbb{M}^n) $ with respect to $ \| \cdot \|_{\mathbb{V}} $. 

Given $T> 0$ and two Hilbert spaces $X$ and $Y$ continuously embedded in a Banach space $U$, we introduce the space of time-dependent functions
\begin{equation*}\label{w12}
W^{1,2}((0,T); X,Y):= \left\lbrace u \in W^{1,2}((0,T); U): \, u \in L^2((0,T);X) \, , \ \tfrac{ \d u}{\d t}  \in L^2((0,T);Y) \right\rbrace ,
\end{equation*}
with associated norm 
\[
\| u\|^2_{W^{1,2}((0,T); X,Y)} := \| u\|^2_{L^2((0,T); X)} + \left\| \tfrac{ \d u}{\d t} \right\|^2_{L^2((0,T);Y)} .  
\]
Let $T>0$. For any function $F \in C^1(\R)$ with $F(0) = 0$, such that $0 < \lambda \le F'(r) \leq \lambda^{-1}$ for every $r \in \R$, for some $ \lambda>0 $, in agreement with \cite[Section 3.3]{AMS} we introduce the set
\begin{equation*}\label{ND}
\mathcal{ND}_F(0,T):= \left\lbrace u \in W^{1,2}((0,T);\H) \cap C^1([0,T];\V'): \, F(u) \in L^2((0,T);\D)  \right\rbrace .
\end{equation*}

As a general rule, we will use superscripts to denote the parameter of curves that are related to geodesics in the Wasserstein space over $(\mathbb{M}^n,\mathsf{d})$ and subscripts to denote the index or parameter of an approximation. Since subscripts are also typically used to refer to initial data of a Cauchy problem as in \eqref{pme_intro} or \eqref{pme-reg-intro}, we will try to avoid ambiguity as much as possible.

Finally, when referring to a function $ \rho: D \subseteq \mathbb{M}^n \times \mathbb{R}^+ \to \mathbb{R} $ (or to a measure) evaluated at some time $t$ \emph{as a whole}, we will adopt the notation $ \rho(t) $ (or $ \mu(t) $). As for its time derivative, we will write $ \tfrac{\partial \rho}{\partial t} $ whenever it can be understood as a classical partial derivative; we will write $ \tfrac{\d \rho}{\d t} $ instead if it must be interpreted as the time derivative of $ \rho $ as a curve in a suitable Banach space. The notation $ \dot{\rho} $ will mostly be used for \emph{metric derivatives}.

\bigskip
\bigskip
\noindent \textbf{List of main notations}
\medskip

\halign{$#$\hfil\ &#\hfil\cr
\M^n & complete, connected, $n$-dimensional Riemannian manifold\cr
\H_K^n & $n$-dimensional hyperbolic space with sectional curvature $-K$ \cr
\mathcal{V} & Riemannian volume measure on $ \M^n $  \cr
\mathrm{Ric}_x& Ricci curvature at $x \in \M^n$ \cr 
T_x \, \M^n & tangent space at $ x \in \M^n $ \cr
\exp_x v & exponential map at $x\in \M^n$ along $ v \in  T_x \, \M^n$ \cr
\P(\M^n)&Borel probability measures on $\M^n$\cr
\P_2(\M^n)&Borel probability measures with finite quadratic moment\cr
\mathscr{M}_2^M(\mathbb{M}^n)& nonnegative Borel measures with mass $M$ and finite quadratic moment\cr
\cW(\mu_0,\mu_1)& Kantorovich-Rubinstein-Wasserstein distance\cr
C([0,T];X)&continuous curves from $ [0,T] $ with values in the metric space $X$ \cr
\mathrm{Lip}\!\left([0,1];X\right)& Lipschitz curves from $[0,1]$ with values in the metric space $X$ \cr
W^{k,p}(\mathbb{M}^n)&standard Sobolev spaces in $\M^n$ \cr
\H:= L^2(\mathbb{M}^n) & space of square integrable functions \cr
\V:= W^{1,2}(\mathbb{M}^n) & standard Sobolev space of order $1$ \cr 
 \mathbb{D}:= \left\{f\in \mathbb{V}: \, \Delta f \in \mathbb{H} \right\}  & space of Sobolev functions with square integrable Laplacian \cr 
{L}_{c}^\infty(\mathbb{M}^n)&bounded real functions with compact support in $\M^n$ \cr
C_b(\mathbb{M}^n) & bounded and continuous real functions in $\M^n$\cr
\mathrm{Lip}_c(\mathbb{M}^n) & Lipschitz real functions with compact support in $\M^n$ \cr
W^{1,2}_c((0,T);L^2(\mathbb{M}^n)) & Sobolev space of $L^2(\mathbb{M}^n)$-valued functions with compact support in time \cr
\Gamma, \Gamma_2, \boldsymbol{\Gamma}_2 & classical, iterated  and nonlocal \emph{carr\'e du champ} operator, respectively, see Section \ref{B-E} \cr
\mathcal{E}_\rho[f] & weighted Dirichlet energy (Hamiltonian functional), see \eqref{eq:weighted Dirichlet} \cr
\mathcal{E}^*_\rho[\ell] & dual of the Hamiltonian functional, see \eqref{eq:dual weighted Dirichlet} \cr 
Q_s\varphi & Hopf-Lax semigroup starting from $\varphi$, see \eqref{def:hopf-lax} \cr
}
\medskip

\section{Statement of the main results} \label{sol-gen}
We consider the following nonlinear diffusion equation:
\begin{equation}\label{pme}
\begin{cases}
 \partial_t\rho = \Delta P(\rho) & \text{in } \mathbb{M}^n \times \mathbb{R}^+ \, , \\
\rho(\cdot,0) = \mu_0 \ge 0 & \text{in } \mathbb{M}^n \times \{ 0 \} \, ,
\end{cases}
\end{equation}
where $ \mu_0 \in \mathscr{M}_2^M(\mathbb{M}^n)$ and $ \rho \mapsto P(\rho) $ is a suitable $ C^1([0,+\infty)) $ function of \emph{porous medium type}. We require that $ \M^n $ and $ P $ satisfy a precise set of hypotheses.

\begin{hypothesis}[Manifold]\label{h_geometric}
We assume throughout that $ \mathbb{M}^n $ ($n\geq 3$) is a smooth, complete and connected Riemannian manifold. Moreover, it will comply with either one or more of the following conditions:
\begin{itemize}
\item The Ricci curvature is uniformly bounded from below, i.e.~there exists $K \ge 0$ such that 
\begin{equation}\label{ricci-K}\tag{H1}
\mathrm{Ric}_x(v,v) \ge -K|v|^2 \qquad \forall x \in \mathbb{M}^n \text{ and } v \in T_x\mathbb{M}^n \, ;
\end{equation}
\item For some $C_S>0$ there holds the Sobolev-type inequality
\begin{equation}\label{Sob}\tag{H2}
\left\| f \right\|_{L^{2^\star}\!(\mathbb{M}^n)} \le C_S \left( \left\| \nabla f \right\|_{L^{2}(\mathbb{M}^n)} + \left\| f \right\|_{L^{2}(\mathbb{M}^n)} \right) \quad \forall f \in W^{1,2}(\mathbb{M}^n) \, , \qquad \text{with } \, 2^\star := \frac{2n}{n-2} \, .
\end{equation} 
\end{itemize}
\end{hypothesis}

A result originally due to Varopoulos \cite{Var} asserts that \eqref{Sob} does hold on any complete, $ n $-dimensional ($ n \ge 3 $) Riemannian manifold satisfying \eqref{ricci-K} along with the \emph{noncollapse} condition 
\begin{equation}\label{non-coll}
\inf_{x \in \mathbb{M}^n} \mathcal{V}\!\left(B_1(x)\right) > 0 \, ,
\end{equation}
where $ B_1(x) := \{ y \in \mathbb{M}^n : \, \mathsf{d}(x,y)<1 \} $. We refer in particular to \cite[Theorem 3.2]{Heb} (in fact $ B_1 $ could be replaced by $ B_r $ for any $ r>0 $). Condition \eqref{non-coll} is also necessary for \eqref{Sob} to hold, see \cite[Lemma 2.2]{Heb}. Note that \eqref{ricci-K} and \eqref{non-coll} are for free on any \emph{compact} Riemannian manifold, a simple subcase of the frameworks we will work within. On the other hand, if $ \mathbb{M}^n $ is noncompact and has finite volume, or more in general has an end with finite volume, then \eqref{non-coll} (and therefore \eqref{Sob}) necessarily fails. 

\smallskip

As concerns the nonlinearity $ P $ appearing in \eqref{pme}, we introduce the following set of hypotheses.  We write them separately in order to be able to single out the specific assumption(s) needed for each result we will prove.

\begin{hypothesis}[Nonlinearity]\label{h_nonlin}
We assume throughout that $ P \in C^1([0,+\infty)) $. Moreover, it will comply with either one or more of the following conditions:
\begin{equation}\label{increas}\tag{H3}
\text{$P(0)=0$ and the map $\rho \mapsto P(\rho)$ is strictly increasing} \, ;
\end{equation}
there exist $ c_1 \ge c_0 >0$ and $m >1$ such that 
\begin{equation}\label{below-above-prime}\tag{H4}
c_0 \, m \, \rho^{m-1}  \le P^\prime (\rho) \le c_1 \, m \, \rho^{m-1} \qquad \forall \rho \ge 0 \,  ,
\end{equation}
\begin{equation}\label{fund-ineq}\tag{H5}
\rho \, P^\prime\!(\rho) - \left(1 - \tfrac{1}{n}\right)P(\rho) \ge 0  \qquad  \forall \rho \ge 0 \, .
\end{equation}
\end{hypothesis}
It is straightforward to check that \eqref{fund-ineq} is implied by \eqref{below-above-prime} provided $ c_1 \le c_0 \, m \, \frac{n}{n-1}$. 

\medskip

Let us firstly notice that the choice $ P(\rho)=\rho^m $ for some $m>1$ (corresponding to the PME) obviously implies \eqref{increas}, \eqref{below-above-prime} and \eqref{fund-ineq}. We point out that condition \eqref{below-above-prime} is essential to establish the smoothing effect (see \eqref{smoothing}) and the compact-support property (see Proposition \ref{compact-support}), while \eqref{fund-ineq} is a key tool to develop the Hamiltonian approach in its abstract formulation (we refer to Lemma \ref{deriv-ham-2}). 

\medskip

We start by providing a good notion of weak solution of \eqref{pme} for initial data in $ \mathscr{M}_2^M(\mathbb{M}^n) $ and for a general nonlinearity $ P $, which is inspired by the (wide) existing literature, see Section \ref{basics-pme}.

\begin{definition}[Weak Wasserstein solutions]\label{defsol-w}
Let $ P $ comply with assumption \eqref{increas}. Given $ \mu_0 \in \mathscr{M}_2^M(\mathbb{M}^n) $, we say that a nonnegative measurable function $ \rho $ is a \emph{Wasserstein solution} of \eqref{pme} if, for every $ T > \tau>0 $, there hold
\begin{equation}\label{sol-w1}
\rho , P(\rho) \in L^2(\mathbb{M}^n \times (\tau,T)) \, , \quad \nabla P(\rho) \in L^2(\mathbb{M}^n \times (\tau,T)) \, ,
\end{equation}
\begin{equation}\label{sol-w2}
\int_0^T \int_{\mathbb{M}^n} \rho \,  \partial_t\eta \, \d\mathcal{V} \d t = \int_0^T \int_{\mathbb{M}^n} \left\langle \nabla P(\rho) \, , \nabla\eta \right\rangle \d\mathcal{V} \d t
\end{equation}
for every $ \eta \in W^{1,2}_c((0,T);L^2(\mathbb{M}^n)) $ with $ \nabla{\eta} \in L^2((0,T);L^2(\mathbb{M}^n)) $, and
\begin{equation*}\label{w-cont}
\mu \in C\!\left([0,T);\left(\mathscr{M}_2^M(\mathbb{M}^n) , \cW \right)\right) \ \text{with } \mu(0)=\mu_0 \, ,
\end{equation*}
where $ \mu(t) = \rho(t) \mathcal{V} $ for $ t>0 $.
\end{definition}

We are now in position to state our main results, which will be proved in Section \ref{sec:str}.
\begin{theorem}[Wasserstein stability]\label{main-result}
Let $\mathbb{M}^n$ ($ n \ge 3 $) comply with assumptions \eqref{ricci-K} and \eqref{Sob}.
Let moreover $ P $ comply with assumptions \eqref{increas}, \eqref{below-above-prime} and \eqref{fund-ineq}. Let $ \mu_0 \in \mathscr{M}_2^M(\mathbb{M}^n) $. Then there exists a \emph{unique} Wasserstein solution $ \rho $ of \eqref{pme}, which satisfies the smoothing estimate 
\begin{equation}\label{smoothing}
\left\| \rho(t) \right\|_{L^\infty\left(\mathbb{M}^n\right)} \le C \left( t^{-\frac{n}{2+n(m-1)}} M^{\frac{2}{2+n(m-1)}} +M \right) \qquad \forall t>0 \, ,
\end{equation}
where $ C\geq 1 $ is a constant depending only on $C_S$, $ n $, $ c_0 $ and independent of $ m $ ranging in a bounded subset of $ (1,+\infty) $. Furthermore, if $ \hat\rho $ is the Wasserstein solution of \eqref{pme} corresponding to another initial datum $ \hat{\mu}_0 \in \mathscr{M}_2^M(\mathbb{M}^n) $, the \emph{stability estimate}
\begin{equation}\label{wass-contr}
\mathcal{W}_2 \!\left(\rho(t),\hat{\rho}(t) \right) \leq \exp\!\left\{ K \, c_1 \, \mathfrak{C}_{m} \left[ \left(tM^{m-1}\right)^{\frac{2}{2+n(m-1)}}  \vee \left(tM^{m-1}\right) \right] \right\} \mathcal{W}_2\! \left( \mu_0 ,\hat{\mu}_0 \right) \qquad \forall t > 0
\end{equation}
holds, where $ \mathfrak{C}_m := C^{m-1} \, 2^{m-2} \left[ 2+n(m-1) \right] $.
\end{theorem}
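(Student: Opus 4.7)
The strategy follows the Eulerian/Hamiltonian roadmap sketched in the introduction, and proceeds in four main stages: approximation, Hamiltonian monotonicity with quantitative slack, integration via the smoothing effect, and passage to the limit together with uniqueness.

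\emph{Stage 1: Reduction to regular data.} I would first establish the statement for initial data $\rho_0 \in L^1(\M^n)\cap L^\infty(\M^n)$ with compact support, appealing to the well-posedness theory for problem \eqref{pme-reg-intro} that will be developed in Section \ref{basics-pme}. To legitimately run the Hamiltonian computation, one must work with a uniformly parabolic regularization, replacing $P$ by $P_\varepsilon$ with $\lambda_\varepsilon\le P'_\varepsilon\le\lambda_\varepsilon^{-1}$ so that the solutions $\rho_\varepsilon$ lie in a class like $\mathcal{ND}_{P_\varepsilon}(0,T)$, are strictly positive and smooth in $(0,T]$, and admit a smooth solution $\phi_\varepsilon$ of the linearized backward flow $\tfrac{\d}{\d t}\phi_\varepsilon=-P_\varepsilon'(\rho_\varepsilon)\Delta\phi_\varepsilon$ starting from an arbitrary smooth $\phi_T$ at time $T$. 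Crucially, the Sobolev inequality \eqref{Sob} and \eqref{below-above-prime} yield the quantitative smoothing \eqref{smoothing} uniformly in $\varepsilon$ via a Moser-type iteration (this is Proposition \ref{smooth-approx}).

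\emph{Stage 2: Hamiltonian differential inequality.} With $\mathcal{E}_{\rho_\varepsilon(t)}[\phi_\varepsilon(t)]=\tfrac12\int_{\M^n}|\nabla\phi_\varepsilon|^2\rho_\varepsilon\,\d\mathcal{V}$, the chain rule together with the identity recalled from \cite[Example 2.4]{AMS} gives
\begin{equation*}
\frac{\d}{\d t}\mathcal{E}_{\rho_\varepsilon(t)}[\phi_\varepsilon(t)]
=\int_{\M^n}P_\varepsilon(\rho_\varepsilon)\,\Gamma_2(\phi_\varepsilon)\,\d\mathcal{V}
+\int_{\M^n}\bigl[\rho_\varepsilon P_\varepsilon'(\rho_\varepsilon)-P_\varepsilon(\rho_\varepsilon)\bigr](\Delta\phi_\varepsilon)^2\,\d\mathcal{V}.
\end{equation*}
The McCann condition \eqref{fund-ineq} (transferred to $P_\varepsilon$ up to vanishing corrections) combined with Bochner's inequality in the form $\mathrm{BE}(-K,\infty)$, namely $\Gamma_2(\phi)\ge -K|\nabla\phi|^2$, bounds the right-hand side from below by $-2K\int |\nabla\phi_\varepsilon|^2 P_\varepsilon(\rho_\varepsilon)\,\d\mathcal{V}$. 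This is where \eqref{fund-ineq} is used: without it, the $(\Delta\phi)^2$ term cannot be discarded and the argument collapses.

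\emph{Stage 3: Closing the inequality via smoothing and integration.} Using \eqref{below-above-prime}, $P_\varepsilon(\rho_\varepsilon)\le c_1\rho_\varepsilon^m\le c_1\|\rho_\varepsilon(t)\|_\infty^{m-1}\rho_\varepsilon$, so the differential inequality becomes
\begin{equation*}
\frac{\d}{\d t}\mathcal{E}_{\rho_\varepsilon(t)}[\phi_\varepsilon(t)]\;\ge\;-4K c_1\|\rho_\varepsilon(t)\|_\infty^{m-1}\,\mathcal{E}_{\rho_\varepsilon(t)}[\phi_\varepsilon(t)].
\end{equation*}
Inserting the uniform smoothing \eqref{smoothing} and integrating from $0$ to $t$ produces a factor of the form $\exp\{-K c_1 C\int_0^t(s^{-\frac{n(m-1)}{2+n(m-1)}}M^{\frac{2(m-1)}{2+n(m-1)}}+M^{m-1})\d s\}$, which after explicit time integration gives exactly the exponent appearing in \eqref{wass-contr}, with the constant $\mathfrak{C}_m$ arising from the elementary inequality $(a+b)^{m-1}\le 2^{m-2}(a^{m-1}+b^{m-1})$. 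This yields
\begin{equation*}
\mathcal{E}_{\rho_\varepsilon(t)}[\phi_\varepsilon(t)]\;\ge\;\exp\bigl\{-2 K c_1\mathfrak{C}_m[(tM^{m-1})^{\frac{2}{2+n(m-1)}}\!\vee(tM^{m-1})]\bigr\}\,\mathcal{E}_{\rho_{0,\varepsilon}}[\phi_\varepsilon(0)].
\end{equation*}

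\emph{Stage 4: From Hamiltonian monotonicity to Wasserstein stability, and passage to the limit.} Following the dual formulation developed in \cite{AMS}, one applies the previous bound along Hopf--Lax subsolutions $Q_s\varphi$ of the Hamilton--Jacobi equation and optimizes over smooth Kantorovich potentials $\varphi$ between $\rho_\varepsilon(t)$ and $\hat\rho_\varepsilon(t)$; this is the point where $\mathcal{E}^*_\rho[\ell]$ enters as the Fenchel dual of $\mathcal{E}_\rho[\phi]$, and the inequality for $\mathcal{E}$ transfers to the desired estimate on $\cW$, giving \eqref{wass-contr} for the approximate problem. Finally, I would let $\varepsilon\to 0$ using stability of weak Wasserstein solutions (so that both $\rho_\varepsilon(t)$ and $\hat\rho_\varepsilon(t)$ converge in $\cW$), and then approximate general $\mu_0,\hat\mu_0\in\Mdue$ by compactly supported bounded densities, using the compact-support property (Proposition \ref{compact-support}) to ensure the approximants remain in $\Mdue$. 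Uniqueness follows from \eqref{wass-contr} applied to two solutions with the same initial datum.

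\emph{Main obstacle.} The delicate step is Stage 2--3, namely justifying the Hamiltonian identity and the monotonicity computation at the regularized level (where $\Gamma_2$ must be understood in the nonlocal sense $\boldsymbol{\Gamma}_2$ as in \cite{AMS} to avoid unwarranted $C^2$ regularity of $\phi_\varepsilon$), and then keeping all constants in the integration explicit and \emph{uniform in} $\varepsilon$ and in the approximation of the initial datum. In particular, one needs the smoothing \eqref{smoothing} to hold with constants independent of the nondegenerate regularization parameter; this requires that the Moser iteration in Section \ref{basics-pme} be performed on $P_\varepsilon$ in a way compatible with \eqref{below-above-prime} for $P$. Once this uniformity is in hand, the passage to the limit in the exponential is straightforward by continuity.
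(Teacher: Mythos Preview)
Your plan is essentially the paper's own strategy, and the four stages match the outline in Subsection~\ref{subsec: out} closely. There is, however, one genuine slip in Stage~2 that is worth correcting.

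You invoke Bochner in the dimensionless form $\mathrm{BE}(-K,\infty)$, i.e.\ $\Gamma_2(\phi)\ge -K\,\Gamma(\phi)$, and then say that the McCann condition~\eqref{fund-ineq} allows you to discard the $(\Delta\phi)^2$ term. But the Hamiltonian identity produces the coefficient $\rho P'_\varepsilon(\rho)-P_\varepsilon(\rho)$ in front of $(\Delta\phi)^2$, and McCann only guarantees $\rho P'_\varepsilon(\rho)-(1-\tfrac1n)P_\varepsilon(\rho)\ge 0$, which in general leaves $\rho P'_\varepsilon(\rho)-P_\varepsilon(\rho)\ge -\tfrac1n P_\varepsilon(\rho)$, a term of the wrong sign. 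The paper (Lemma~\ref{deriv-ham-2}) instead uses the \emph{dimensional} condition $\mathrm{BE}(-K,n)$ in its integrated form~\eqref{eq:BE-c}: this contributes an extra $\tfrac1n\int(\Delta\phi)^2 P_\varepsilon(\rho)\,\d\mathcal V$ that combines with the remainder to give precisely $\int\bigl[\rho P'_\varepsilon(\rho)-(1-\tfrac1n)P_\varepsilon(\rho)\bigr](\Delta\phi)^2\,\d\mathcal V\ge 0$ by~\eqref{fund-ineq}. With this correction the differential inequality of Stage~3 follows as you wrote, and the remaining stages (regular-curve approximation via Lemma~\ref{lemma12-2ams}, Kantorovich duality, passage $\varepsilon\downarrow 0$, then $\mu_0\in\Mdue$ by density) coincide with the paper's proof.
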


In fact \eqref{wass-contr} is sharp, as $ t \downarrow 0 $, in the hyperbolic space $ \mathbb{H}^n_K $ of sectional curvature $-K$, i.e.~of Ricci curvature $ -(n-1) K $. 

\begin{theorem}[Optimality]\label{optimal}
Estimate \eqref{wass-contr} is \emph{optimal} in $ \mathbb{M}^n = \mathbb{H}^n_K $, for $ P(\rho)=\rho^m $, with the choices $ \mu_0 = M \delta_x $ and $ \hat{\mu}_0 = M \delta_y $, provided the points $ x , y \in \mathbb{H}^n_K $ are close enough. More precisely, upon setting $ \delta :=\mathsf{d}(x,y)>0 $, there exist constants $ \kappa=\kappa(n,m)>0 $, $ \overline{\delta}=\overline{\delta}(n,K,m)>0 $ and $ \overline{t}=\overline{t}(\delta,n,K,m,M)>0 $ such that if $ \delta \in (0,\overline{\delta}) $ then 
\begin{equation}\label{optimal-delta}
\mathcal{W}_2 \!\left(\rho(t),\hat{\rho}(t) \right) \ge \left[ 1 +K \, \kappa \left( t M^{m-1} \right)^{\frac{2}{2+n(m-1)}} \right] \mathcal{W}_2\! \left( \mu_0 ,\hat{\mu}_0 \right)  \qquad \forall t \in (0,\overline{t}) \, .
\end{equation}
\end{theorem}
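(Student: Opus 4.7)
The plan is to exploit the radial symmetry of the solutions from Dirac initial data together with the Barenblatt-type short-time asymptotics in $\mathbb{H}^n_K$ (from \cite{VazHyp}), and then to combine this with a sharp lower bound of Ollivier \cite{Ol} for the Wasserstein distance between radially-symmetric measures centered at nearby points in a negatively-curved space. The point is that the support of $\rho(t)$ spreads to a scale $r(t)$ in a fully nonlinear way, and negative curvature rigidly forces the $\cW_2$-distance between the two spreading profiles to exceed $\cW_2(\mu_0,\hat\mu_0)$ by an amount of order $K\,r(t)^2$, which is exactly $K(tM^{m-1})^{2/(2+n(m-1))}$.

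\textbf{Step 1: Barenblatt solution.} By the uniqueness part of Theorem \ref{main-result} and the isometric invariance of the PME in $\mathbb{H}^n_K$, the Wasserstein solution starting from $M\delta_x$ is rotationally invariant about $x$. The results of \cite{VazHyp} on source-type solutions in $\mathbb{H}^n_K$ for $P(\rho)=\rho^m$ provide, at small times, a precise description: the support of $\rho(t)$ is contained in a geodesic ball $B_{r(t)}(x)$ with
\[
r(t) \asymp (tM^{m-1})^{1/(2+n(m-1))},
\]
and after the natural rescaling the profile converges to the Euclidean Barenblatt profile. Analogous statements hold for $\hat\rho(t)$ centered at $y$.

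\textbf{Step 2: Ollivier-type lower bound.} Ollivier's computation \cite{Ol} uses the hyperbolic law of cosines to show that, for two radially-symmetric probability measures $\mu^x_r$, $\mu^y_r$ of support size $r$ about points $x,y$ with $\delta=\mathsf{d}(x,y)$ small compared to $r^{-1}$, one has $\cW_2(\mu^x_r,\mu^y_r)\ge\delta(1+c_n K\,r^2)+o(\delta r^2)$ for a strictly positive dimensional constant $c_n$. The heuristic reason is that any transport plan must pair points $\exp_x v$ with points $\exp_y w$, and the hyperbolic law of cosines gives
\[
\mathsf{d}(\exp_x v,\exp_y w)^2 \ge \delta^2 + |v-P_\gamma w|^2 + c\,K\,\delta\,\bigl(|v|^2+|w|^2\bigr) + \text{higher order},
\]
where $P_\gamma$ denotes parallel transport along the geodesic from $x$ to $y$. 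Minimizing over plans of marginals $\mu^x_r,\mu^y_r$ and using the rotational symmetry then leaves a positive term proportional to $K\delta\,r^2$ in the squared Wasserstein distance.

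\textbf{Step 3: Adaptation to Barenblatt profiles and conclusion.} The main obstacle is that the Ollivier estimate is stated for uniform measures while our $\rho(t)$ is a Barenblatt profile. However, what is really used in Step 2 is only that the profile carries a nontrivial second moment about its center, namely $\int \mathsf{d}(\cdot,x)^2\,\rho(t)\,\mathrm{d}\mathcal{V}\gtrsim M\,r(t)^2$, which follows from the Barenblatt scaling of Step 1. Running the same pointwise lower bound on $\mathsf{d}(\exp_x v,\exp_y w)^2$ against the radial profiles of $\rho(t)$ and $\hat\rho(t)$ yields
\[
\cW_2(\rho(t),\hat\rho(t))^2 \ge \delta^2\bigl(1 + c K\,r(t)^2\bigr) + o(\delta^2 r(t)^2) = \cW_2(\mu_0,\hat\mu_0)^2 \bigl(1+cK\,r(t)^2\bigr)+o(\delta^2 r(t)^2),
\]
so that taking square roots and absorbing remainders gives \eqref{optimal-delta} with $\kappa=\kappa(n,m)>0$, valid for $\delta<\overline\delta(n,K,m)$ and $t<\overline t(\delta,n,K,m,M)$, the latter chosen small enough that the short-time Barenblatt description from \cite{VazHyp} applies and that $r(t)$ remains small compared to the scale at which the second-order expansion is accurate. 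The hardest technical point is making rigorous the passage from Ollivier's uniform-ball estimate to the Barenblatt profile, which requires a careful use of Jacobi-field comparison along the geodesic from $x$ to $y$ to justify the pointwise lower bound on $\mathsf{d}(\exp_x v,\exp_y w)^2$ uniformly for $v,w$ in the support of the rescaled profiles.
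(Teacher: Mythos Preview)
Your overall architecture---Barenblatt short-time asymptotics from \cite{VazHyp} feeding into an Ollivier-type curvature estimate---matches the paper. Step~1 is correct, and the two-sided bound on the radial profile (a Barenblatt-type sandwich) is exactly what the paper extracts from \cite{VazHyp} and mass conservation.

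The gap is in Step~2. The pointwise inequality you write,
\[
\mathsf{d}(\exp_x v,\exp_y w)^2 \ge \delta^2 + |v-P_\gamma w|^2 + c\,K\,\delta\,(|v|^2+|w|^2) + \ldots,
\]
is not true. Already in the flat case the correct identity is $\mathsf{d}^2=|v-P_\gamma w-\delta e|^2=\delta^2+|v-P_\gamma w|^2-2\delta\langle v-P_\gamma w,e\rangle$, so there is a cross term of order $\delta|v|$ with no sign; and along the geodesic direction ($v=\alpha e$, $w=0$) one has $\mathsf{d}^2=(\delta-\alpha)^2$, which is strictly smaller than $\delta^2+\alpha^2$. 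In hyperbolic space the curvature correction enters only through the \emph{perpendicular} components (e.g.\ via $\cosh(\sqrt{K}\mathsf{d})=\cosh(\sqrt{K}\delta)\cosh(\sqrt{K}|w_\perp|)$ in the model case), giving a term of order $K\delta^2|w_\perp|^2$, not $K\delta|w|^2$. Because the cross term has no sign, you cannot simply integrate a pointwise lower bound against an arbitrary coupling and take the infimum; the bad coupling that tilts mass along $e$ destroys the inequality.

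The paper sidesteps this by not working with $\cW_2$ directly. It uses $\cW_2\ge\cW_1$ and then the Kantorovich--Rubinstein duality \eqref{dualita W1} with a single well-chosen $1$-Lipschitz test function: the \emph{signed distance} $g$ to the totally geodesic hypersurface $\mathrm{E}=\exp_x v^\perp$. One then only needs the expansion of $\mathsf{d}(\exp_y ru',\mathrm{E})$ (Lemma~\ref{lem:asintotico Ollivier}), which cleanly isolates the curvature term $\tfrac{K}{2}r^2\sin^2\alpha$ without any sign-indefinite cross term; integrating $g$ against the pushed-forward radial profile and exploiting the symmetry that kills the first-order term yields the lower bound (Lemma~\ref{th:optimality}). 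This is precisely the mechanism you are missing: the duality with a fixed Lipschitz function replaces the uncontrolled infimum over couplings. Your Step~3 cannot be repaired without this change of viewpoint.
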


The proof of Theorem \ref{optimal} will be provided in Subsection \ref{opt-small}. Some comments regarding both Theorem \ref{main-result} and Theorem \ref{optimal} are now in order. 

\begin{remark}[The PME, the heat equation and gradient flows] \label{nonlin-long}\rm
As mentioned above, the explicit choice $ P(\rho)=\rho^m $ corresponds to the well-known \emph{porous medium equation} (PME). In this case estimate \eqref{wass-contr} holds with $ c_1 = 1 $. In particular, if we let $ m \downarrow 1 $, thanks to the fact that $ \mathfrak{C}_m \to 1 $ we recover exactly the following stability estimate for the heat equation:
\begin{equation}\label{contr-heat}
\mathcal{W}_2 \!\left(\rho(t),\hat{\rho}(t) \right) \leq e^{K \, t}  \,\mathcal{W}_2 \!\left(\mu_0,\hat \mu_0 \right) \qquad \forall t > 0 \, .
\end{equation}
We recall that the Ricci bound \eqref{ricci-K} is \emph{equivalent} to the $(-K)$-gradient flow formulation of the heat equation with respect to the relative entropy in $ (\mathscr{P}_2(\mathbb{M}^n),\mathcal{W}_2) $, from which \eqref{contr-heat} follows: we refer to \cite[Theorem 1.1 and Corollary 1.4]{VRS} for more details. We stress that, as a byproduct of Theorem \ref{optimal}, we can deduce that in general on negatively-curved manifolds the porous medium equation \emph{cannot} be seen as the gradient flow of some $ \lambda$-convex functional with respect to the 2-Wasserstein distance, at least in the sense of Evolution Variational Inequalities (see \cite{AGS}). 
Indeed, if it was, then the estimate 
\begin{equation*}\label{contr-flow}
\mathcal{W}_2 \!\left(\rho(t),\hat{\rho}(t) \right) \leq e^{\lambda\, t}  \,\mathcal{W}_2 \!\left(\mu_0,\hat \mu_0 \right) \qquad \forall t > 0 
\end{equation*}
would hold for some $\lambda  \in \R$, thus contradicting \eqref{optimal-delta}. On the other hand, it is known that the PME \emph{can} indeed be seen as the gradient flow of the free energy \eqref{free} in the case where the Ricci curvature is nonnegative (we refer to \cite{Stu} and \cite{O,OW}), so that \eqref{wass-contr} holds with $ K=0 $.  
\end{remark}

\begin{remark}[The Cartan-Hadamard case]\rm
If, in place of \eqref{Sob}, the manifold $ \mathbb{M}^n $ supports a \emph{Euclidean} Sobolev inequality, namely 
\begin{equation}\label{Sob-euc}
\left\| f \right\|_{L^{2^\star}\!(\mathbb{M}^n)} \le C_S \left\| \nabla f \right\|_{L^{2}(\mathbb{M}^n)} \qquad \forall f \in C^1_c(\mathbb{M}^n) \, ,
\end{equation} 
then it is not difficult to deduce that \eqref{wass-contr} turns into the following estimate:
\begin{equation}\label{wass-contr-ch}
\mathcal{W}_2 \!\left(\rho(t),\hat{\rho}(t) \right) \leq \exp\!\left\{ K \, c_1 \, \mathfrak{C}_{m}  \left(tM^{m-1}\right)^{\frac{2}{2+n(m-1)}}  \right\} \mathcal{W}_2\! \left( \mu_0 ,\hat{\mu}_0 \right) \qquad \forall t > 0 \, .
\end{equation}
This is a simple consequence of our method of proof, since in that case the smoothing effect \eqref{smoothing} holds with no additional $M$ term on the right-hand side, which causes the linear term to appear at the exponent of \eqref{wass-contr}. 
Note that when $K>0$ estimate \eqref{wass-contr-ch} improves \eqref{wass-contr} for long times (the just mentioned leading linear term disappears) and remains qualitatively the same for small times.
We recall that \eqref{Sob-euc} does hold, for instance, on any \emph{Cartan-Hadamard} manifold, that is a complete, simply connected Riemannian manifold with everywhere nonpositive sectional curvature (see \cite{GMPrm} and references therein).
\end{remark}

\begin{remark}[The $2$-dimensional case]\label{2d}\rm
The results of Theorem \ref{main-result} can also be extended to the dimension $n = 2$. In that case, the Sobolev inequality should be replaced by the Gagliardo-Nirenberg inequality
\begin{equation}\label{gn-2}
\left\| f \right\|_{L^r(\mathbb{M}^2)} \le {C}_{GN} \left( \left\| \nabla{f} \right\|_{L^2(\mathbb{M}^2)} + \left\| f \right\|_{L^2(\mathbb{M}^2)}  \right)^{\frac{r-s}{r}} \left\| f \right\|_{L^s(\mathbb{M}^2)} ^{\frac{s}{r}} \qquad \forall f \in W^{1,2}(\mathbb{M}^2) \cap L^s(\mathbb{M}^2) \, ,
\end{equation}
for some $ r>s>0 $ and $ C_{GN}>0 $. We recall that, by \cite[Theorem 3.3]{BCLS}, the validity of \eqref{gn-2} for \emph{some} $r>s>0$ yields the validity of the same inequality for \emph{every} $r>s>0$. In particular, this allows us to reproduce the proof of Proposition \ref{smooth-approx} also for $n=2$, starting from \eqref{gn-2} in place of \eqref{NGN}. The rest of the results we need in order to establish Theorem \ref{main-result} also hold for $ n=2 $. Note that, again, inequality \eqref{gn-2} is satisfied (e.g.~with $ r>2 $ and $s=r-2$) on any $2$-dimensional Riemannian manifold complying with \eqref{ricci-K} and \eqref{non-coll}: this is a simple consequence of \cite[Lemma 2.1 and Theorem 3.2]{Heb}. As concerns the optimality result contained in Theorem \ref{optimal}, we just observe that its proof follows with no modifications in the case $n=2$ as well (see Subsection \ref{opt-small}). 
\end{remark}

\section{Geometric and functional preliminaries} \label{aux-tools}

In this section we recall some basic results concerning the $\Gamma$-calculus, curvature conditions, the Wasserstein distance(s) and the Hopf-Lax semigroup. We also resume a crucial density result for Wasserstein geodesics, which will be needed in the sequel. 

\subsection{The Bakry-\'Emery curvature condition}\label{B-E}

Let $(\M^n,\mathcal{B},\mathcal{V})$ be the measure space given by the 
Riemannian manifold $\mathbb{M}^n$, the $\sigma$-algebra of Borel sets $\mathcal{B}$ and the volume measure $\mathcal{V}$ associated with the metric. Given a diffusion operator $L$ on $(\M^n,\mathcal{B},\mathcal{V})$ and a suitable algebra of functions $\mathcal{A}$, it is by now standard to define the \emph{carr\'e du champ} operator
\begin{equation*}
\Gamma(f,g):= \frac{1}{2}\left(L(fg) - fLg - gLf \right), \qquad f,g \in \mathcal{A} \, ,
\end{equation*}
along with the \emph{iterated carr\'e du champ} operator
\begin{equation}\label{gamma-2-def}
\Gamma_2(f,g):= \frac{1}{2}\left(L\!\left(\Gamma(f,g)\right) - \Gamma(f,Lg) - \Gamma(g,Lf) \right), \qquad f,g \in \mathcal{A} \, .
\end{equation}
The introduction of these tools is motivated by the fact that they carry the geometric information on the measure space $(\M^n,\mathcal{B},\mathcal{V})$, being at the same time very suitable for computations. For more details, we refer the reader to the monograph \cite{BGL} and to the original paper by Bakry and \'Emery \cite{BE}.

In the present setting we fix once for all $L$ as the unique self-adjoint extension in $L^2(\M^n)$ of the Laplace-Beltrami operator $L= \Delta$. In this case, it is apparent that  
\begin{equation}\label{gamma}
\Gamma(f,g) = \left\langle \nabla f , \nabla g \right\rangle , \quad  \Gamma(f):=\Gamma(f,f)=|\nabla f|^2 \, , \quad\Gamma_2(f) := \Gamma_2(f,f) = \frac{1}{2} \, \Delta\! \left|\nabla f \right|^2 - \left\langle \nabla f , \nabla \Delta f \right\rangle .  
\end{equation} 
We recall that, thanks to \cite[Theorem 2.4]{Str}, the operator $ (-\Delta) $ defined in $ C_c^\infty(\mathbb{M}^n) $ is essentially self-adjoint on \emph{any} complete Riemannian manifold, i.e.~$\mathbb{D}$ coincides with the closure of $ C_c^\infty(\mathbb{M}^n) $ with respect to the norm $ \| \cdot \|_{\mathbb{D}} $. 

When the Ricci curvature of $\M^n$ is uniformly bounded from below by a constant $\lambda \in \R$, by applying the Bochner-Lichnerowicz formula it follows that (for all sufficiently regular function $f$)
\begin{equation}\label{BEkn}
\Gamma_2(f) \geq \lambda \Gamma(f) + \frac{1}{n}(\Delta f)^2 \, , 
\end{equation} 
which goes under the name of \emph{Bakry-\'Emery curvature-dimension condition} $\mathrm{BE}(\lambda,n)$. It is possible to show that in fact the converse implication is also true: if a Riemannian manifold $\mathbb{M}^n$ satisfies the condition $\mathrm{BE}(\lambda,N)$, then $n\leq N$ and $\mathrm{Ric} \ge \lambda$, see \cite[Subsection~1.16 and Sections~C.5,~C.6]{BGL} for further details.
\smallskip

Let us now introduce the (local) Dirichlet form $\mathcal{E}: \mathbb{H} \rightarrow [0,+\infty]$ by setting
\begin{equation}\label{standard-form}
\mathcal{E}(f):=\int_{\mathbb{M}^n} \Gamma(f) \, \mathrm{d}\mathcal{V} = \int_{\M^n} |\nabla f|^2 \, \d \mathcal{V} \, ,
\end{equation}
with proper domain $\mathbb{V}$. In addition, according to \cite{AGS2}, it is convenient to define a suitable ``integral'' version of the $\Gamma_2$ operator, in the following form:
\begin{equation}\label{gamma2}
\begin{aligned}
\boldsymbol{\Gamma}_2[f;\rho] &:= \int_{\mathbb{M}^n} \left[ \frac{1}{2} \, \Gamma(f) \, \Delta \rho - \Gamma(f, \Delta f)\,  \rho \right] \mathrm{d}\mathcal{V} \\
&= \int_{\mathbb{M}^n} \left[ \frac{1}{2} \, \Gamma(f) \, \Delta \rho + \Gamma(f,\rho) \, \Delta f + \left( \Delta f \right)^2 \rho  \right] \mathrm{d}\mathcal{V} \qquad \forall (f,\rho)\in \mathbb{D}_{\infty} \, ,
\end{aligned}
\end{equation}
where $\mathbb{D}_{\infty}$ stands for the algebra of functions defined as $
\mathbb{D}_{\infty}:=\mathbb{D}\cap L^{\infty}(\mathbb{M}^n)$. Note that, formally, \eqref{gamma2} is obtained upon choosing $ g=f $ in \eqref{gamma-2-def}, multiplying by $ \rho $ and integrating by parts.
The introduction of the multilinear form $\boldsymbol{\Gamma}_2$ provides a weak version of the Bakry-\'Emery condition: for every $(f,\rho)\in \mathbb{D}_{\infty}$ with $\rho\geq 0$ there holds
\begin{equation}\label{eq:BE-c}
\boldsymbol{\Gamma}_2[f;\rho] \ge \lambda \, \int_{\mathbb{M}^n} \Gamma(f) \, \rho \, \mathrm{d}\mathcal{V} + \frac{1}{n} \int_{\M^n} \left(\Delta f\right)^2 \rho \, \d \mathcal{V} \, . 
\end{equation} 
On a Riemannian manifold, the two formulations \eqref{BEkn} and $\eqref{eq:BE-c}$ turn out to be equivalent, and we  will refer to both of them as $\mathrm{BE}(\lambda,n)$. For a proof of such equivalence see e.g.~\cite[Subsection 2.2]{AGS2}.
\smallskip

In order to deal with ``variational'' solutions of \eqref{pme}, we will also consider a weighted version of the Dirichlet energy \eqref{standard-form}. More precisely, given $\rho\in L^{\infty}(\mathbb{M}^n)$ with $ \rho \ge 0 $, we set $\mathcal{E}_{\rho}:\mathbb{V}\rightarrow [0,+\infty)$ as
\begin{equation}\label{eq:weighted Dirichlet}
\mathcal{E}_{\rho}[f]:=\int_{\mathbb{M}^n}\Gamma(f) \, \rho \, \mathrm{d}\mathcal{V} \, .
\end{equation}
The associated dual weighted Dirichlet energy $\mathcal{E}^*_{\rho}:\mathbb{V}'\rightarrow [0,+\infty]$ is defined as
\begin{equation}\label{eq:dual weighted Dirichlet}
\frac 12 \mathcal{E}^\ast_{\rho}[\ell]:= \sup_{f\in\mathbb{V}}  {}_{\mathbb{V}'} \langle \ell,f \rangle_{\mathbb{V}} - \frac 12 \mathcal{E}_{\rho}[f] \, ,
\end{equation}
where we denoted by $\mathbb{V}'$ the dual space of $\mathbb{V}$.

\subsection{The Wasserstein space}\label{W-S} 

Let $(X,\mathsf{d})$ be a complete metric space. We say that a curve  $\gamma:[0,1]\rightarrow (X,\mathsf{d})$ belongs to $\mathrm{AC}^2([0,1];(X,\mathsf{d}))$ if there exists a function $w\in L^2((0,1))$ such that
\begin{equation}\label{def: curva AC2}
\mathsf{d}(\gamma(s),\gamma(t))\leq \int_s^t w(r)\,\d r \qquad \text{for every} \ 0\leq s\leq t\leq 1 \, . 
\end{equation}
When $\gamma \in \mathrm{AC}^2([0,1];(X,\mathsf{d}))$ its \emph{metric velocity}, defined as
\begin{equation*}
|\dot{\gamma}|(r):=\lim_{h\to 0}\frac{\mathsf{d}(\gamma(r+h),\gamma(r))}{|h|} \, ,
\end{equation*}
exists for a.e.~$r\in (0,1)$. Moreover, $|\dot\gamma| $ belongs to $ L^2((0,1))$ and provides the \emph{minimal} function $w$, up to negligible sets, such that \eqref{def: curva AC2} holds (see \cite[Theorem 1.1.2]{AGS}).

A (constant-speed) \emph{geodesic} is a curve $\gamma$ satisfying
\begin{equation*}
\mathsf{d}(\gamma(0),\gamma(1))=\int_0^1 |\dot{\gamma}|(r) \, \d r \, ,
\end{equation*}
or equivalently 
\begin{equation*}
\mathsf{d}(\gamma(s),\gamma(t))=\mathsf{d}(\gamma(0),\gamma(1))(t-s) \qquad\text{for every } 0\leq s\leq t\leq 1 \, ;
\end{equation*}
in particular, a geodesic is a Lipschitz curve. 

We say that a measure $\mu\in\mathscr{M}^M(\mathbb{M}^n)$ has finite $p$-moment, $p\geq 1$, and we write $\mu\in\mathscr{M}_p^M(\mathbb{M}^n)$, if there exists a point $o\in \mathbb{M}^n$ such that
\begin{equation*}\label{def: second moment}
\int_{\mathbb{M}^n}\mathsf{d}(x,o)^p \, \d\mu(x)<\infty \, .
\end{equation*}
We define the \emph{$p$-Wasserstein} cost between two measures $\mu^0,\mu^1\in \mathscr{M}(\mathbb{M}^n)$ as 
\begin{equation*}\label{def: p-Wasserstein}
\mathcal{W}^p_p(\mu^0,\mu^1) := \inf_\pi \int_{\mathbb{M}^n\times \mathbb{M}^n} \mathsf{d}(x,y)^p \, \d\pi(x,y) \, ,
\end{equation*}
where the infimum is taken among all the transport plans $ \pi $ between $\mu^0$ and $\mu^1$. The latter are measures $\pi\in \mathscr{M}(\mathbb{M}^n\times \mathbb{M}^n)$ such that $\pi(A\times \mathbb{M}^n)=\mu^0(A)$ and $\pi(\mathbb{M}^n\times B)=\mu^1(B)$ for every Borel sets $A,B\subset \mathbb{M}^n$. We observe that $\mathcal{W}_p(\mu^0,\mu^1)=\infty$ whenever $\mu^0(\mathbb{M}^n)\neq \mu^1(\mathbb{M}^n)$.
Another elementary fact is that 
\begin{equation}\label{elem}
\mu^0\in \mathscr{M}_p^M(\mathbb{M}^n) \quad \text{and} \quad \mathcal{W}_p(\mu^0,\mu^1)<\infty \quad \text{implies} \quad \mu^1\in \mathscr{M}_p^M(\mathbb{M}^n) \, .
\end{equation}
We are mainly interested in the cases $p=2$ and $p=1$. As regards the $1$-Wasserstein distance, we will only use these two well-known properties (see ~\cite[Chapter 6]{Vil}):
\begin{equation}\label{order-wass}
\mathcal{W}_1(\mu^0,\mu^1)\leq \mathcal{W}_2(\mu^0,\mu^1) \qquad \text{for every} \ \mu_1,\mu_2 \in \P_2(\mathbb{M}^n) \, ,
\end{equation}
\begin{equation}\label{dualita W1}
\mathcal{W}_1(\mu^0,\mu^1)=\sup\left\{\int_{\mathbb{M}^n}f \, \d \mu^1-\int_{\mathbb{M}^n} f \, \d \mu^0: \quad f:\mathbb{M}^n\rightarrow \mathbb{R} \,  , \ f \ \text{is }  \text{$1$-Lipschitz}\right\}.
\end{equation}

When $p=2$, it can be shown that for every $M\in (0,+\infty)$ the space $(\mathscr{M}_2^M(\mathbb{M}^n),\mathcal{W}_2)$ is a metric space, called the \emph{2-Wasserstein} (or simply Wasserstein) \emph{space} of mass $M$ over $\mathbb{M}^n$, which inherits many geometric properties of the ambient space $\mathbb{M}^n$. In particular, it is complete, separable and geodesic (for a proof of these facts we refer again to \cite[Chapter 6]{Vil}).

Here we will mostly work with the \emph{dual characterization} of the Wasserstein distance due to Kantorovich, which asserts that (see e.g.~\cite[Theorem 5.10(i)]{Vil}) for any $ \mu^0,\mu^1 \in \mathscr{M}^M(\mathbb{M}^n) $ there holds
\begin{equation}\label{dual-kantorovich}
\frac{1}{2} \, \mathcal{W}^2_2(\mu^0,\mu^1) = \sup_{ \substack{ \varphi,\psi \in C_b(\mathbb{M}^n): \\ \psi(x) \le \varphi(y) + \frac12 \mathrm{d}(x,y)^2 \ \forall x,y \in \mathbb{M}^n }  } \left\{ \int_{\mathbb{M}^n} \psi \, \d\mu^1 - \int_{\mathbb{M}^n} \varphi \, \d\mu^0 \right\} .
\end{equation}
From \eqref{dual-kantorovich} it is clear that, for any fixed $ \varphi $, the best possible choice of $ \psi $ is provided by 
\begin{equation}\label{Q1}
\psi(x) = Q_1\varphi(x) := \inf_{y\in \mathbb{M}^n}\varphi(y)+\frac12\,{\mathsf{d}(x,y)^2} \qquad \forall x \in \mathbb{M}^n \, .
\end{equation}
Thanks to \eqref{Q1}, by means of a cut-off argument it is not difficult to show that the supremum in \eqref{dual-kantorovich} can actually be taken over the space $ C_c(\M^n) $ of continuous and {compactly-supported} functions. A local regularization procedure then ensures that one can replace $ C_c(\M^n) $ with the space of \emph{Lipschitz} and \emph{compactly-supported} functions $ \mathrm{Lip}_c(\mathbb{M}^n) $.

In our framework it is convenient to see $ Q_1 \varphi $ as an endpoint of the \emph{Hopf-Lax} evolution semigroup starting from $ \varphi $. We recall that the latter is given by the family of maps $Q_s: \mathrm{Lip}_c(\mathbb{M}^n) \rightarrow \mathrm{Lip}_c(\mathbb{M}^n)$, $s\geq 0$, defined as
\begin{equation}\label{def:hopf-lax}
Q_s\varphi(x) := \inf_{y\in \mathbb{M}^n} \varphi(y)+\frac{\mathsf{d}(x,y)^2}{2s} \quad \forall s>0 \, ,  \qquad Q_0\varphi(x):=\varphi(x) \qquad \forall x \in \mathbb{M}^n \, .
\end{equation} 
It is readily seen that $Q_s\varphi$ satisfies
\begin{equation*}\label{eq:hopf-lax L^inf bound}
\inf_{\mathbb{M}^n}\varphi\leq Q_s\varphi(x)\leq \varphi(x) \qquad \forall s \ge 0 \, , \ \forall x \in \mathbb{M}^n \, .
\end{equation*}
More importantly, since $\M^n$ is a geodesic space, it can be shown (see \cite[Theorem 3.6]{AGSheat}) that $ (s,x) \mapsto Q_s\varphi(x)$ is the Lipschitz solution of the Hopf-Lax (or Hamilton-Jacobi) problem
\begin{equation}\label{eq:hopf-lax}
\begin{cases}
\frac{\partial}{\partial s} Q_s \varphi(x) = - \frac{1}{2} \, |\nabla Q_s \varphi |^2(x) \quad \text{for a.e. } (x,s) \in \mathbb{M}^n \times \R^+ \, , \\
Q_0 \varphi = \varphi \, .
\end{cases}
\end{equation} 

We can subsume the above discussion in the following.
\begin{proposition}\label{prop: hopf-lax duality}
Let $\mu^0,\mu^1\in\mathscr{M}^M(\mathbb{M}^n)$. Then 
\begin{equation*}\label{ew:wass-kant-pre}
\frac{1}{2} \, \mathcal{W}_2^2 \!\left(\mu^0,\mu^1 \right) = \sup_{\varphi \in \mathrm{Lip}_c(\mathbb{M}^n)} \left\{ \int_{\mathbb{M}^n} Q_1 \varphi \, \d\mu^1 - \int_{\mathbb{M}^n} \varphi \, \d\mu^0 \right\} .
\end{equation*}
\end{proposition}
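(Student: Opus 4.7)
My plan is to deduce the proposition from the Kantorovich duality formula \eqref{dual-kantorovich} in three stages: first a substitution that forces the optimal dual variable, then a cut-off that confines the test functions to $C_c(\M^n)$, and finally a regularization bringing them into $\mathrm{Lip}_c(\M^n)$. The inequality $\ge$ in the proposition will be immediate: for every $\varphi\in \mathrm{Lip}_c(\M^n)\subset C_b(\M^n)$, the function $Q_1\varphi$ is bounded (since $\inf_{\M^n}\varphi\le Q_1\varphi\le \sup_{\M^n}\varphi$) and continuous (minimizers in \eqref{Q1} exist thanks to Hopf--Rinow, and an equi-continuity argument yields continuity), so the pair $(\varphi,Q_1\varphi)$ complies with the admissibility constraint in \eqref{dual-kantorovich} by definition of $Q_1$. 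For the converse, I would note that any admissible $(\varphi,\psi)\in C_b\times C_b$ satisfies $\psi\le Q_1\varphi$ pointwise (just pass to the infimum in $y$ in the constraint), so replacing $\psi$ by $Q_1\varphi$ does not decrease the $\mu^1$-integral; hence the Kantorovich supremum is unchanged upon restricting to pairs of the form $(\varphi,Q_1\varphi)$ with $\varphi$ ranging in $C_b(\M^n)$.

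The genuinely delicate step is the cut-off from $C_b$ to $C_c$. Since the identity $Q_1(\varphi+c)=Q_1\varphi+c$ and the equality of the masses of $\mu^0$ and $\mu^1$ make the functional invariant under the addition of a constant, I may assume $\varphi\ge 0$. Fixing a reference point $o\in \M^n$, I would introduce continuous cut-offs $\chi_R$ with $\chi_R=1$ on $B_R(o)$, $\chi_R=0$ off $B_{R+1}(o)$, $0\le\chi_R\le 1$, and set $\varphi_R:=\chi_R\varphi\in C_c(\M^n)$, so that $0\le \varphi_R\le \varphi$ and $\varphi_R\to \varphi$ pointwise; dominated convergence then handles the integral against $\mu^0$. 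For the first integral I would establish the pointwise limit $Q_1\varphi_R(x)\to Q_1\varphi(x)$: any minimizer $y_R$ of \eqref{Q1} for $\varphi_R$ satisfies $\tfrac12\mathsf{d}(x,y_R)^2\le Q_1\varphi_R(x)+\|\varphi\|_\infty\le Q_1\varphi(x)+\|\varphi\|_\infty$, so $\{y_R\}$ remains in a fixed closed ball, which is compact by Hopf--Rinow; up to a subsequence $y_R\to y^\ast$, and combining the continuity of $\varphi$ with $\chi_R(y_R)\to 1$ I would conclude $\varphi_R(y_R)\to \varphi(y^\ast)$, with the inequality $Q_1\varphi_R\le Q_1\varphi$ forcing $y^\ast$ to be optimal for $\varphi$ itself. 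A second application of dominated convergence, based on the uniform bound $|Q_1\varphi_R|\le \|\varphi\|_\infty$ and the finiteness of $\mu^1$, closes this stage.

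For the last step, given $\varphi\in C_c(\M^n)$ supported in a compact set $K$, I would perform a Moreau--Yosida inf-convolution $\varphi_\epsilon(x):=\inf_{y\in\M^n}\{\varphi(y)+\tfrac{1}{\epsilon}\mathsf{d}(x,y)\}$, which is $(1/\epsilon)$-Lipschitz, uniformly bounded, and converges to $\varphi$ uniformly on $\M^n$ by uniform continuity of $\varphi$; multiplying by a fixed Lipschitz cut-off equal to $1$ on a neighbourhood of $K$ produces $\tilde\varphi_\epsilon\in\mathrm{Lip}_c(\M^n)$ with $\tilde\varphi_\epsilon\to\varphi$ uniformly. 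The elementary fact that $Q_1$ is non-expansive in the sup-norm (immediate from \eqref{Q1}) transfers uniform convergence to $Q_1\tilde\varphi_\epsilon\to Q_1\varphi$, and both integrals pass to the limit because $\mu^0,\mu^1$ are finite. I expect the cut-off rather than the regularization to be the main obstacle: the double infimum defining $Q_1$ does not commute with dominated convergence on $\varphi_R$ by naive manipulation, and what ultimately rescues the argument is the compactness of closed metric balls supplied by Hopf--Rinow, which localizes the minimizers and allows the infimum and the limit to be exchanged.
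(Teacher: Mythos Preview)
Your proposal is correct and follows precisely the route sketched in the paper just before the proposition: optimize $\psi$ to $Q_1\varphi$, then reduce $C_b$ to $C_c$ by a cut-off, then reduce $C_c$ to $\mathrm{Lip}_c$ by regularization. The paper merely alludes to ``a cut-off argument'' and ``a local regularization procedure'' without details, whereas you supply them (notably the Hopf--Rinow compactness to localize the minimizers of $Q_1\varphi_R$, and the Moreau--Yosida approximation together with the sup-norm nonexpansivity of $Q_1$); the strategies are the same.
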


We now recall a useful characterization of the convergence in the $2$-Wasserstein space, whose proof can be found in \cite[Proposition 7.1.5]{AGS}.

\begin{proposition}\label{wass-conv}
Let $ \mu \in \mathscr{M}^M_2(\mathbb{M}^n) $ and $ \{ \mu_j \}_{j \in \mathbb{N}} \subset \mathscr{M}^M_2(\mathbb{M}^n) $. Then 
$$
\lim_{j \to \infty} \mathcal{W}_2(\mu_j,\mu) = 0
$$
if and only if $ \mu_j \rightharpoonup \mu $ narrowly (i.e.~tested against any function of $ C_b(\mathbb{M}^n) $) and $ \{ \mu_j \}_{j \in \mathbb{N}} $ has equi-integrable second moments, namely there exists a point $o\in \M^n$ such that 
$$\lim_{k\to \infty} \limsup_{j\to \infty} \int_{\M^n\setminus B_k(o)}\mathsf{d}(x,o)^2 \, \d \mu_j(x)=0 \, .$$
\end{proposition}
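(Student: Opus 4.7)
The plan is to prove the two implications separately, with the easy direction resting on the Wasserstein triangle inequality and the harder direction on a truncation reducing to the compact case.

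For the direct implication, assume $\mathcal{W}_2(\mu_j, \mu) \to 0$. From $\mathcal{W}_1 \le \mathcal{W}_2$, see \eqref{order-wass}, together with the Kantorovich--Rubinstein duality \eqref{dualita W1}, I would first deduce $\int f \, \d\mu_j \to \int f \, \d\mu$ for every bounded $1$-Lipschitz function $f$, which combined with a routine tightness check yields the narrow convergence $\mu_j \rightharpoonup \mu$. Next, I would exploit the identity $\mathcal{W}_2^2(\nu, M\delta_o) = \int_{\M^n} \mathsf{d}(x,o)^2 \, \d\nu(x)$, valid for every $\nu \in \mathscr{M}_2^M(\M^n)$, so that the Wasserstein triangle inequality immediately gives $\int \mathsf{d}(x,o)^2 \, \d\mu_j \to \int \mathsf{d}(x,o)^2 \, \d\mu$. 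Narrow convergence paired with the convergence of this specific unbounded integral is the classical characterization of equi-integrability of the weight $\mathsf{d}(\cdot,o)^2$ along $\{\mu_j\}$, which is precisely the stated tail condition (a continuous cut-off argument isolates the tail integrand and lets one pass to the limit).

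For the converse I would reduce to the compact case by truncation. Fix $o \in \M^n$ and pick $R>0$ such that $\mu(\partial B_R(o))=0$, which excludes only countably many values, and set
\begin{equation*}
\tilde\mu_j^R := \mu_j|_{B_R(o)} + \mu_j\!\left(B_R(o)^c\right) \delta_o \, , \qquad \tilde\mu^R := \mu|_{B_R(o)} + \mu\!\left(B_R(o)^c\right) \delta_o \, .
\end{equation*}
Coupling the outside mass with the Dirac at $o$ and the inside mass with itself furnishes an admissible transport plan whose cost controls
\begin{equation*}
\mathcal{W}_2^2 \!\left(\mu_j, \tilde\mu_j^R\right) \leq \int_{B_R(o)^c} \mathsf{d}(x,o)^2 \, \d\mu_j(x) \, ,
\end{equation*}
and analogously for $\mu$. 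By Portmanteau, the choice of $R$ guarantees both $\mu_j|_{B_R(o)} \rightharpoonup \mu|_{B_R(o)}$ and $\mu_j(B_R(o)^c) \to \mu(B_R(o)^c)$, hence $\tilde\mu_j^R \rightharpoonup \tilde\mu^R$ narrowly; since both approximants are supported on the compact ball $\overline{B_R(o)}$ and share the same total mass $M$, on this compact arena $\mathcal{W}_2$ metrizes narrow convergence, giving $\mathcal{W}_2(\tilde\mu_j^R, \tilde\mu^R) \to 0$ for every admissible $R$.

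Combining the three estimates through
\begin{equation*}
\mathcal{W}_2(\mu_j, \mu) \leq \mathcal{W}_2\!\left(\mu_j, \tilde\mu_j^R\right) + \mathcal{W}_2\!\left(\tilde\mu_j^R, \tilde\mu^R\right) + \mathcal{W}_2\!\left(\tilde\mu^R, \mu\right) ,
\end{equation*}
I would first pass $j \to \infty$, which kills the middle term for every admissible $R$, and then $R \to \infty$ along admissible values, at which stage the outer two tend to zero thanks to the hypothesis of equi-integrable second moments along $\{\mu_j\}$ and to $\mu \in \mathscr{M}_2^M(\M^n)$, respectively. The only delicate point I anticipate lies in this converse direction: equi-integrability is indispensable to make the tail bounds \emph{uniform} in $j$, and Portmanteau has to be invoked at radii of $\mu$-null boundary in order to avoid spurious mass defects on the truncation spheres. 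The direct implication is, by contrast, essentially immediate once one notices that $M\delta_o$ is itself a legitimate reference point in the Wasserstein arena.
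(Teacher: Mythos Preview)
Your argument is correct in both directions. The paper does not actually supply a proof of this proposition: it simply records the statement and refers the reader to \cite[Proposition 7.1.5]{AGS}. Your truncation-and-triangle approach for the converse and your moment-convergence argument for the direct implication are standard and sound; in particular, the care you take to choose radii $R$ with $\mu(\partial B_R(o))=0$ so that Portmanteau applies cleanly to the restricted measures is exactly the point that needs attention, and you handle it correctly.
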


In the proof of Theorem \ref{main-result} it will be crucial to connect any two given measures $\mu^0,\mu^1 \in \mathscr{M}_2^M(\mathbb{M}^n) $ through a curve in the $2$-Wasserstein space that satisfies some additional regularity properties, according to the following definition.

\begin{definition}\label{regcurve}
Let $\mu \equiv \{\mu^s\}_{s\in [0,1]} $ be a curve with values in $\mathscr{M}_2^M(\mathbb{M}^n)$. We say that $\mu$ is a \emph{regular curve} if $ \mu^s = \rho^s \mathcal{V} $ and the following hold:
\begin{itemize}
\item[(i)] There exists a constant $ R>0 $ such that $ \left\| \rho^s \right\|_{L^\infty(\M^n)} \leq R $ for every $s\in[0,1]$;
\item[(ii)] $\mu\in \mathrm{Lip}\!\left([0,1]; \left( \mathscr{M}_2^M(\mathbb{M}^n) , \cW \right) \right)$;
\item[(iii)] $\sqrt{\rho^s}\in \mathbb{V}$ and there exists a constant $E$ such that 
$$
\int_{\M^n} \Gamma\!\left(\sqrt{\rho^s}\right) \d \mathcal{V} \leq E \qquad \forall s\in[0,1] \, .
$$
\end{itemize}
\end{definition}

\begin{remark}\label{rr}\rm
If $\mu = \rho \mathcal{V} $ is a regular curve, in particular $\rho^s\in \mathbb{V}$ for every $ s \in [0,1] $. Moreover, thanks to \cite[Lemma 8.1]{AMS}, condition (ii) ensures that $\rho \in \mathrm{Lip}([0,1];\mathbb{V}')$.
\end{remark}

The following density result, whose proof is contained in \cite[Lemma 12.2]{AMS}, allows one to approximate Wasserstein geodesics by means of regular curves.

\begin{lemma}\label{lemma12-2ams} 
Let $\mathbb{M}^n$ satisfy \eqref{ricci-K} and $ \mu^0,\mu^1\in \mathscr{M}_2^M(\mathbb{M}^n)$. Then there exist a geodesic $ \{ \mu^s \}_{s \in [0,1]}  $ connecting $\mu^0$ and $\mu^1$ and a sequence of regular curves $ \{ \mu^s_j \}_{j \in \mathbb{N},s \in [0,1]} \subset \mathscr{M}_2^M(\mathbb{M}^n)$ such that
\begin{equation}\label{eq:est-lemma-app-A}
\lim_{j \to \infty} \mathcal{W}_2\left( \mu^s_j , \mu^s \right) = 0 \qquad \forall s \in [0,1] 
\end{equation}
and
\begin{equation}\label{eq:est-lemma-app-B}
\limsup_{j\to\infty}  \int_0^1 \left| \dot{\mu}^s_j \right|^2 \d s  \le  \mathcal{W}_2^2 \! \left( \mu^0 , \mu^1 \right) .
\end{equation}
Furthermore, if $ \mu^0 = \rho^0 \mathcal{V} $ and $ \mu^1 = \rho^1 \mathcal{V} $ with $ \rho^0 , \rho^1 \in {L}_{c}^\infty(\mathbb{M}^n) $, then $ \mu^s = \rho^s \mathcal{V} $ with $ \rho^s $ uniformly (w.r.t.~$s$) bounded and compactly supported, and in addition to \eqref{eq:est-lemma-app-A}--\eqref{eq:est-lemma-app-B} also the following hold:  
\begin{equation}\label{eq:improve1}
\lim_{j \to \infty} \left\| \rho^s_j - \rho^s \right\|_{{L}^p(\mathbb{M}^n)} = 0 \qquad \forall p \in [1,\infty) \, , \quad \forall s \in [0,1] \, ,
\end{equation}
\begin{equation}\label{eq:improve2}
\limsup_{j \to \infty} \sup_{s \in [0,1]} \left\| \rho^s_j \right\|_{{L}^\infty(\mathbb{M}^n)} < \infty \, .
\end{equation}
\end{lemma}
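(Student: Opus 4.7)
The plan is to smooth the geodesic by the heat semigroup. First, I would take a $\mathcal{W}_2$-geodesic $\{\mu^s\}_{s \in [0,1]}$ connecting $\mu^0$ and $\mu^1$, whose existence is guaranteed because $(\mathscr{M}_2^M(\mathbb{M}^n), \mathcal{W}_2)$ is a complete geodesic space. Under the Ricci lower bound \eqref{ricci-K} (equivalently, the Bakry-\'Emery condition $\mathrm{BE}(-K,\infty)$), the heat semigroup $(H_t)_{t \geq 0}$ extends to $\mathscr{M}_2^M(\mathbb{M}^n)$ and is $e^{Kt}$-Lipschitz in the $2$-Wasserstein distance:
\begin{equation*}
\mathcal{W}_2(H_t \mu, H_t \nu) \leq e^{Kt}\, \mathcal{W}_2(\mu, \nu) \qquad \forall \mu, \nu \in \mathscr{M}_2^M(\mathbb{M}^n) \, .
\end{equation*}
Setting $\mu^s_j := H_{1/j} \mu^s$, I would verify the three conditions in Definition \ref{regcurve}. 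For fixed $j$, the boundedness of the heat kernel $p_{1/j}(\cdot,\cdot)$ (which itself follows from \eqref{ricci-K}) gives $\|\rho^s_j\|_{L^\infty} \leq M \|p_{1/j}\|_{L^\infty} =: R_j$, proving (i). Condition (ii) follows directly from the displayed Lipschitz estimate applied along $\mu^s$. Condition (iii) is the uniform-in-$s$ Fisher information bound at fixed $j$, and would come from the standard regularizing effect of the heat flow: a Bakry-\'Emery computation together with the entropy dissipation identity $\frac{\d}{\d t}\mathrm{Ent}(H_t\mu) = -4 \int \Gamma(\sqrt{H_t\mu})\, \d \mathcal{V}$ bounds $\int \Gamma(\sqrt{H_{1/j}\mu^s})\, \d \mathcal{V}$ in terms of $j$, $K$, $M$ and the second moment of $\mu^s$.

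Next I would establish the convergence statements. As $j \to \infty$, $\mu^s_j \rightharpoonup \mu^s$ narrowly for each $s$, and the heat flow propagates second moments with at most linear-in-time growth controlled by $K$ and $M$, so the family $\{\mu^s_j\}$ has equi-integrable second moments. Proposition \ref{wass-conv} then yields \eqref{eq:est-lemma-app-A}. For the action estimate \eqref{eq:est-lemma-app-B}, the Wasserstein-Lipschitz property gives $|\dot{\mu}^s_j| \leq e^{K/j}\, |\dot{\mu}^s|$ for a.e.\ $s \in (0,1)$; since $\{\mu^s\}$ is a $\mathcal{W}_2$-geodesic, $\int_0^1 |\dot{\mu}^s|^2 \, \d s = \mathcal{W}_2^2(\mu^0, \mu^1)$, and passing to the limsup with $e^{2K/j} \to 1$ produces the desired inequality.

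When additionally $\rho^0, \rho^1 \in L^\infty_c(\mathbb{M}^n)$, the uniform-in-$s$ $L^\infty$ bound and compactness of support of $\rho^s$ along the geodesic follow from the optimal transport regularity theory on Riemannian manifolds under a lower Ricci bound: the displacement interpolant is supported in the compact set swept out by minimizing geodesics between the two supports, and the Cordero-Erausquin-McCann-Schmuckenschl\"ager Jacobian inequalities control $\|\rho^s\|_\infty$ uniformly in $s$ in terms of $\|\rho^0\|_\infty$, $\|\rho^1\|_\infty$ and $K$. The uniform-in-$(s,j)$ $L^\infty$ bound \eqref{eq:improve2} is then immediate from the heat-equation maximum principle $\|H_{1/j}\rho^s\|_\infty \leq \|\rho^s\|_\infty$. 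Finally, \eqref{eq:improve1} follows by interpolation between this uniform $L^\infty$ bound and the $L^1$ convergence, which in turn is a consequence of narrow convergence combined with uniform concentration of mass on a fixed compact neighbourhood of $\mathrm{supp}(\rho^s)$ via Gaussian-type tail bounds on the heat kernel. The main technical obstacle I anticipate is establishing that the geodesic $\{\mu^s\}$ itself — not just the approximants — inherits uniformly bounded and compactly supported densities, which genuinely requires the full McCann-Brenier theory on manifolds and a careful selection of the geodesic (uniqueness is not automatic without additional regularity of the endpoints).
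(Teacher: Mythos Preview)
The paper does not supply a proof here but defers entirely to \cite[Lemma~12.2]{AMS}, and your heat-flow regularisation $\mu^s_j:=H_{1/j}\mu^s$ is indeed the central mechanism used there. Your treatment of the $L^\infty_c$ case --- propagation of $L^\infty$ and compact support along the geodesic via the Cordero-Erausquin--McCann--Schmuckenschl\"ager Jacobian estimates, followed by the maximum principle for $H_{1/j}$ --- is correct and matches the intended argument.

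There is, however, a genuine gap in your verification of condition~(i) in the general case. You write that the uniform bound $\|\rho^s_j\|_{L^\infty}\le M\|p_{1/j}\|_{L^\infty}$ holds because ``the boundedness of the heat kernel $p_{1/j}(\cdot,\cdot)$ follows from \eqref{ricci-K}''. This is false: a Ricci lower bound controls volume growth from above (Bishop--Gromov) but gives no lower bound on $\mathcal{V}(B_r(x))$, so the Li--Yau upper estimate $p_t(x,y)\lesssim \mathcal{V}(B_{\sqrt t}(x))^{-1}\exp(-c\,\mathsf{d}(x,y)^2/t)$ does not yield a uniform bound on $p_t$. Without a non-collapse hypothesis such as \eqref{non-coll} --- which the lemma does \emph{not} assume --- $H_{1/j}\mu^s$ need not have bounded density when $\mu^s$ is singular (e.g.\ when $\mu^0,\mu^1$ are Diracs).

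The repair, and this is how the argument in \cite{AMS} is actually organised, is to insert a preliminary approximation of the \emph{endpoints}: replace $\mu^0,\mu^1$ by nearby measures with bounded (in fact $L^\infty_c$) densities, take the $\mathcal{W}_2$-geodesic between those, and only then apply a short-time heat flow. Once the endpoints are in $L^\infty$, the Jacobian inequalities you already invoke guarantee $\sup_s\|\rho^s\|_{L^\infty}<\infty$ along the interpolant, and the maximum principle --- not a pointwise heat-kernel bound --- then gives (i) for the heat-regularised curve. In other words, the ingredients you assembled for the $L^\infty_c$ part are precisely what is needed to close the general case as well; you just have to use them one step earlier.
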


To conclude, given a regular curve $ \mu^s = \rho^s \d \mathcal V $ in the sense of Definition \ref{regcurve} (not necessarily a geodesic),  by combining \cite[Theorem 6.6, formula (6.11)]{AMS} and \cite[Theorem 8.2, formula (8.7)]{AMS} we can deduce that the following key identity holds:  
\begin{equation}\label{eq:key-id-E}
\int_0^1 \left| \dot{\mu}^s \right|^2 \d s = \int_0^1 \mathcal{E}^\ast_{\rho^s} \! \left[ \tfrac{\d}{\d s} \rho^s \right] \d s \, ,
\end{equation} 
where $\mathcal{E}^\ast_\rho$ is the dual weighted Dirichlet energy introduced in \eqref{eq:dual weighted Dirichlet}. Note that the r.h.s.~of \eqref{eq:key-id-E} does make sense, in view of Remark \ref{rr}.

\section{Fundamental properties of porous medium-type equations on manifolds}\label{basics-pme}

This section is devoted to the study of \eqref{pme} for more regular initial data, that is the problem
\begin{equation}\label{pme-reg}
\begin{cases}
 \partial_t\rho = \Delta P(\rho) & \text{in } \mathbb{M}^n \times \mathbb{R}^+ \, , \\
\rho(\cdot,0) = \rho_0 \ge 0 & \text{on } \mathbb{M}^n \times \{ 0 \} \, ,
\end{cases}
\end{equation}
with $ \rho_0 \in L^1(\mathbb{M}^n) \cap L^\infty(\mathbb{M}^n) $. To begin with, we will introduce the notion of \emph{weak energy} solution and then discuss some crucial related properties.
In particular, we will focus on the smoothing effect and on a bound on the support of such solutions (when the initial data are compactly supported).  
Inspired by \cite{AMS}, for a restricted class of nonlinearities we will also give an alternative (variational) notion of solution and consequently prove the equivalence with the weak-energy one.
Finally, with regards to the Hamiltonian strategy mentioned in the Introduction, we will discuss well-posedness results for the \emph{forward linearized} equation associated with \eqref{pme-reg} and for the related \emph{backward adjoint} equation.      

For convenience, in the following we make the additional (implicit) assumption that $\mathbb{M}^n$ is \emph{noncompact} and with \emph{infinite volume}, as well as in Subsection \ref{noncomp}. Note that for our purposes there is no point in considering noncompact manifolds with \emph{finite} volume, since most of our results require the validity of the Sobolev inequality \eqref{Sob}, which does not hold on such manifolds.

{The simple modifications required to deal with \emph{compact} manifolds will be shortly addressed in Subsection \ref{compact}}.
\subsection{Weak energy solutions}\label{weak-sol}

The concept of \emph{weak energy} solution of \eqref{pme-reg} has been proved to be well suited for porous medium-type equations: see e.g.~\cite[Subsections 5.3.2 and 11.2.1]{V07}, \cite[Section 3]{FM}, \cite[Subsections 3.1 and 3.2]{GMP13} or \cite[Section 2]{GMPrm}. Here we mostly take inspiration from \cite[Section 3]{FM}: there the framework is purely Euclidean, but the basic definitions and properties are straightforwardly adaptable to the Riemannian setting.

Even if in Subsection \ref{sect:not} we introduced the more synthetic notations \eqref{spazi}, here we keep the standard notations typically used in the PDE framework.

\begin{definition}[Weak energy solutions]\label{defsol}
Let $ P $ comply with assumption \eqref{increas}. Given a nonnegative $ \rho_0 \in L^1(\mathbb{M}^n) \cap L^\infty(\mathbb{M}^n) $, we say that a nonnegative measurable function $ \rho $ is a \emph{weak energy solution} of \eqref{pme-reg} if, for every $ T>0 $, there hold
\begin{equation*}\label{sol-p1}
\rho , P(\rho) \in L^2(\mathbb{M}^n \times (0,T))  \, , \qquad \nabla P(\rho) \in L^2(\mathbb{M}^n \times (0,T)) 
\end{equation*}
and 
\begin{equation}\label{sol-p2}
\int_0^T \int_{\mathbb{M}^n} \rho \,  \partial_t\eta \, \d\mathcal{V} \d t = -\int_{\mathbb{M}^n} \rho_0(x) \, \eta(x,0) \, \d\mathcal{V}(x) + \int_0^T \int_{\mathbb{M}^n} \left\langle \nabla P(\rho) \, , \nabla\eta \right\rangle \d\mathcal{V} \d t
\end{equation}
for every $ \eta \in W^{1,2}((0,T);L^2(\mathbb{M}^n)) $ with $ \nabla{\eta} \in L^2((0,T);L^2(\mathbb{M}^n)) $ such that $ \eta(T) = 0 $.
\end{definition}

Existence and uniqueness of weak energy solutions, at least for the class of data $ L^1(\mathbb{M}^n) \cap L^\infty(\mathbb{M}^n) $, is by now a well-established issue (see e.g.~the references quoted above). Nevertheless, since it will be very useful to our later purposes, we recall here the approximation procedure that allows one to construct such solutions: the essential idea is to approximate the possibly degenerate nonlinearity $ P \in C^1([0,+\infty))$ by means of suitable \emph{nondegenerate} nonlinearities. More precisely, for every $ \varepsilon>0 $ we define a function $ P_\varepsilon $ by
\begin{equation}\label{P-app}
\left( P_\varepsilon \right)^\prime\!(\rho) := 
\begin{cases}
P^\prime(\rho)+\varepsilon & \text{if} \ \rho \in \left[ 0 , \frac{1}{\varepsilon} \right] , \\
 \left[ P^\prime\!\left( \frac{1}{\varepsilon} \right) \wedge P^\prime\!\left(\rho\right) \right] + \varepsilon & \text{if} \ \rho > \frac{1}{\varepsilon} \, ,
\end{cases}
\qquad P_\varepsilon(0)=0 \, .
\end{equation}
In the following simple lemma, we collect some crucial properties enjoyed by $P_{\varepsilon}$. 
\begin{lemma}
Let $ P $ comply with \eqref{increas}. We have that $ P_\varepsilon\in C^1([0,+\infty))$ with the estimates
\begin{equation}\label{est-peps-1} 
P_\varepsilon(\rho) \le P(\rho) + \varepsilon \rho \qquad \forall \rho \ge 0 \,  ,
\end{equation}
\begin{equation}\label{est-peps-3} 
\left(P_\varepsilon \right)^\prime\!(\rho) \ge P^\prime(\rho)  \qquad \forall \rho \in \left[ 0 , \tfrac{1}{\varepsilon} \right] , 
\end{equation}
and
\begin{equation}\label{est-peps-2} 
\varepsilon \le \left(P_\varepsilon \right)^\prime\!(\rho) \le \max_{\rho\in \left[ 0 , 1 / \varepsilon \right]} P^\prime\!\left( \rho \right) + \varepsilon \qquad \forall \rho \ge 0 \, .
\end{equation}
In particular, $ P_\varepsilon $ is also strictly increasing. Moreover, if \eqref{fund-ineq} holds then 
\begin{equation}\label{est-peps-4}  
\rho \left(P_\varepsilon \right)^\prime\!(\rho) -\left(1 - \tfrac{1}{n}\right) P_\varepsilon(\rho) \ge 0  \qquad \forall \rho \ge 0 \, . 
\end{equation}
\end{lemma}
\begin{proof}
The fact that $ P_\varepsilon $ is  $ C^1([0,+\infty)) $ easily follows from \eqref{P-app} and the continuity of the minimum operator, since $ P'  $ is continuous. By integration, we obtain 
\begin{equation}\label{proof est-peps}
 P_\varepsilon(\rho)=\begin{cases}
P(\rho)+\varepsilon\rho & \text{if} \ \rho \in \left[ 0 , \frac{1}{\varepsilon} \right] , \\
P\!\left( \frac{1}{\varepsilon} \right) + \varepsilon\rho + \displaystyle \int_{\frac{1}{\varepsilon}}^{\rho} P^\prime\!\left( \tfrac{1}{\varepsilon} \right) \wedge P^\prime\!\left(s\right) \d s & \text{if} \ \rho > \frac{1}{\varepsilon} \, .
\end{cases}
\end{equation}
Inequality \eqref{est-peps-1} is obvious (being an identity) in the interval $\left[ 0 , \frac{1}{\varepsilon} \right]$, while for $\rho>\frac{1}{\varepsilon}$ it is a consequence of the trivial inequality $ P^\prime\!\left( \frac{1}{\varepsilon} \right) \wedge P^\prime\!\left(s\right) \le P^\prime\!\left(s\right)$. The bounds \eqref{est-peps-3} and \eqref{est-peps-2} are a direct consequence of the definition of $\left( P_\varepsilon \right)^\prime$ together with the properties $ P^\prime \ge 0$ and 
$$
\left[ P^\prime\!\left( \tfrac{1}{\varepsilon} \right) \wedge P^\prime\!\left(\rho\right) \right] + \varepsilon \le P^\prime\!\left( \tfrac{1}{\varepsilon} \right)+ \varepsilon\le \max_{\rho\in \left[ 0 , 1 / \varepsilon \right]} P^\prime\!\left( \rho \right) + \varepsilon\, .
$$
It remains to prove \eqref{est-peps-4} under \eqref{fund-ineq}. Its validity is clear in the interval $\left[ 0 , \frac{1}{\varepsilon} \right]$, so let us assume without loss of generality that $\rho > \frac{1}{\varepsilon}$. Using the explicit expression \eqref{proof est-peps}, it holds:  
\begin{equation}\label{proof est-peps2}
\begin{aligned}
 & \, \rho \left(P_\varepsilon \right)^\prime\!(\rho) -\left(1 - \tfrac{1}{n}\right) P_\varepsilon(\rho) \\
= & \, \rho \left[ P^\prime\!\left( \tfrac{1}{\varepsilon} \right) \wedge P^\prime\!\left(\rho\right) \right] + \varepsilon \rho - \left(1 - \tfrac{1}{n}\right) \left[ P\!\left( \tfrac{1}{\varepsilon} \right) + \varepsilon\rho + \int_{\frac{1}{\varepsilon}}^{\rho} P^\prime\!\left( \tfrac{1}{\varepsilon} \right) \wedge P^\prime\!\left(s\right) \d s \right] \\
\ge  & \, \rho \left[ P^\prime\!\left( \tfrac{1}{\varepsilon} \right) \wedge P^\prime\!\left(\rho\right) \right]  - \left(1 - \tfrac{1}{n}\right) \left[ P\!\left( \tfrac{1}{\varepsilon} \right)  + \int_{\frac{1}{\varepsilon}}^{\rho} P^\prime\!\left( \tfrac{1}{\varepsilon} \right) \wedge P^\prime\!\left(s\right) \d s \right] . 
\end{aligned}
\end{equation}
Now, if  $P^\prime\!\left( \frac{1}{\varepsilon}\right) \ge P^\prime\!\left( \rho \right)$ the result follows by noticing that $ P^\prime\!\left( \frac{1}{\varepsilon} \right) \wedge P^\prime\!\left(s\right) \le P^\prime\!\left( s \right)$, integrating in the right-most side of \eqref{proof est-peps2} and taking advantage of assumption \eqref{fund-ineq}. If instead $P^\prime\!\left( \frac{1}{\varepsilon}\right) < P^\prime\!\left( \rho \right)$, we exploit the property $P^\prime\!\left( \frac{1}{\varepsilon} \right) \wedge P^\prime\!\left(s\right) \le P^\prime\!\left( \frac{1}{\varepsilon} \right) $ and observe that in this case the right-most side of \eqref{proof est-peps2} is bounded from below by
$$
\begin{aligned}
 & \, \rho P^\prime\!\left( \tfrac{1}{\varepsilon} \right) - \left(1 - \tfrac{1}{n}\right) P\left( \tfrac{1}{\varepsilon} \right) - \left(1 - \tfrac{1}{n}\right)\left(\rho - \tfrac{1}{\varepsilon}\right) P^\prime\!\left( \tfrac{1}{\varepsilon} \right) \\
= &  \,  \tfrac{1}{\varepsilon} P^\prime\!\left( \tfrac{1}{\varepsilon} \right) -\left(1 - \tfrac{1}{n}\right)P\left( \tfrac{1}{\varepsilon} \right)+\tfrac{1}{n}\left(\rho - \tfrac{1}{\varepsilon}\right)P^\prime\!\left( \tfrac{1}{\varepsilon} \right) \ge 0 \, , 
\end{aligned}
$$
where we have used again assumption \eqref{fund-ineq} at $\rho=\frac{1}{\varepsilon}$ along with the fact that $P^\prime \ge 0$. 
\end{proof}

Note that if $ P $ complies with the left-hand bound in \eqref{below-above-prime} so does $ P^\prime_\varepsilon $ in the interval $ \left[0,{1}/{\varepsilon}\right] $, thanks to \eqref{est-peps-3}. This bound is crucial to establish the \emph{smoothing effect}, which is a key ingredient to our strategy (see Proposition \ref{smooth-approx} below). Accordingly, we thus address the following approximate version of \eqref{pme-reg}: 
\begin{equation}\label{pme-approx}
\begin{cases}
 \partial_t\rho_\varepsilon  = \Delta P_\varepsilon(\rho_\varepsilon) & \text{in } \mathbb{M}^n \times \mathbb{R}^+ \, , \\
\rho_{\varepsilon}(\cdot,0) = \rho_0 & \text{on } \mathbb{M}^n \times \{ 0 \} \, .
\end{cases}
\end{equation} 
We stress that the notation $ \rho_\varepsilon $ for the solution of \eqref{pme-approx} only makes sense for $ \varepsilon>0 $, and it should not be confused with the initial datum $ \rho_0 $. It may help to keep in mind that $ \rho_\varepsilon $  is supposed to be a ``regularization'' of $ \rho $, the latter being the solution of \eqref{pme-reg}.

Problem \eqref{pme-approx} can be interpreted both from the viewpoint of \emph{linear} and \emph{nonlinear} theory, in the sense that $ P_\varepsilon $ is a nonlinear function but it is ``uniformly elliptic'', hence one expects that the solutions of \eqref{pme-approx} enjoy, to some extent, properties similar to those satisfied by the solutions of the \emph{heat equation} (we refer to Propositions \ref{prop: sol AMS} and \ref{oleinik} below). We will mainly take advantage of the linear interpretation in Section \ref{sec:str}, in agreement with the approach of \cite{AMS}. The nonlinear interpretation is exploited in the present section. 

\begin{proposition}[Existence, uniqueness, properties of  weak energy solutions]\label{exunimain}
Let $ P $ comply with \eqref{increas}. Given a nonnegative $ \rho_0 \in L^1(\mathbb{M}^n) \cap L^\infty(\mathbb{M}^n) $, there exists a \emph{unique} weak energy solution $\rho$ of \eqref{pme-reg}, which enjoys the following additional properties:
\begin{itemize}
\item \emph{$L^1$-continuity:} $ \{ \rho(t) \}_{t \ge 0} $ is a continuous curve with values in $ L^1(\mathbb{M}^n) $;

\smallskip

\item \emph{Energy inequality:} $\rho$ satisfies
\begin{equation}\label{eest}
\int_0^T \int_{\mathbb{M}^n} \left| \nabla{P(\rho)} \right|^2 \d\mathcal{V} \d t + \int_{\mathbb{M}^n} \Psi(\rho(x,T)) \, \d\mathcal{V}(x) \le \int_{\mathbb{M}^n} \Psi(\rho_0) \, \d\mathcal{V} \qquad \forall T > 0 \, ,
\end{equation}
where $ \Psi(\rho) := \int_0^\rho P(r)\,\d r $;

\smallskip

\item \emph{Nonexpansivity of the $ L^p $ norms:} for every $ p \in [1,\infty] $ there holds
\begin{equation}\label{eq: non-exp}
\left\| \rho(t) \right\|_{L^p(\mathbb{M}^n)} \le \left\| \rho_0 \right\|_{L^p(\mathbb{M}^n)}  \qquad \forall t > 0 \, ;
\end{equation}

\smallskip

\item \emph{Mass conservation:} if in addition $ \mathbb{M}^n $ satisfies \eqref{ricci-K} then
\begin{equation}\label{cons-mass-th}
\int_{\mathbb{M}^n} \rho(x,t) \, \d\mathcal{V}(x) = \int_{\mathbb{M}^n} \rho_0 \, \d\mathcal{V} \qquad \forall t >0 \, ;
\end{equation}

\smallskip

\item \emph{Approximation:} if $ \varepsilon>0 $ and $ \rho_\varepsilon $ is the weak energy solution of \eqref{pme-approx}, where $ P_\varepsilon(\rho) $ is defined in \eqref{P-app}, then 
\begin{equation}\label{conv-epsilon}
\lim_{\varepsilon \downarrow 0} \left\| \rho_\varepsilon(t) - \rho(t) \right\|_{L^1_{\mathrm{loc}}(\mathbb{M}^n)} = 0 \qquad \forall t>0 \, ;
\end{equation}
if in addition \eqref{ricci-K} is satisfied, then
\begin{equation}\label{conv-epsilon-bis}
\lim_{\varepsilon \downarrow 0} \left\| \rho_\varepsilon(t) - \rho(t) \right\|_{L^1(\mathbb{M}^n)} = 0 \qquad \forall t>0 \,;  
\end{equation}

\smallskip

\item \emph{$L^1$-contraction:} if $ \hat{\rho} $ is the weak energy solution corresponding to another nonnegative initial datum $ \hat{\rho}_0 \in L^1(\mathbb{M}^n) \cap L^\infty(\mathbb{M}^n) $, then
\begin{equation}\label{L1-est}
\left\| \rho(t)-\hat{\rho}(t) \right\|_{L^1(\mathbb{M}^n)} \le \left\| \rho_0-\hat{\rho}_0 \right\|_{L^1(\mathbb{M}^n)} \qquad \forall t > 0 \, .
\end{equation}

\end{itemize}
\end{proposition}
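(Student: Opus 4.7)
The plan is to build solutions by a double approximation: first regularize the nonlinearity via $P_\varepsilon$ as in \eqref{P-app}, which makes \eqref{pme-approx} uniformly parabolic since $(P_\varepsilon)'\geq \varepsilon$, and then solve \eqref{pme-approx} on an exhaustion $\{\Omega_R\}$ of $\M^n$ by bounded smooth subdomains with homogeneous Dirichlet conditions. Classical quasilinear parabolic theory (Ladyzhenskaya--Solonnikov--Ural'tseva) provides smooth nonnegative solutions $\rho_{\varepsilon,R}$ obeying comparison. Letting $R\to\infty$ along monotone approximations $\rho_0\chi_{\Omega_R}$, the $L^1$-contraction on the approximants (established below) yields a limit $\rho_\varepsilon$ on $\M^n$ solving \eqref{pme-approx} in the weak energy sense.

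For the a priori estimates, uniform in $\varepsilon$ and $R$, I would: (i) test \eqref{pme-approx} with $P_\varepsilon(\rho_{\varepsilon,R})$ and integrate by parts, obtaining the analogue of \eqref{eest} with $\Psi_\varepsilon(\rho):=\int_0^\rho P_\varepsilon(r)\,\d r$ and controlling $\nabla P_\varepsilon(\rho_{\varepsilon,R})$ in $L^2((0,T);L^2(\M^n))$; (ii) test (rigorously, via a smooth truncation of $|\cdot|^{p-2}\cdot$) with $|\rho_{\varepsilon,R}|^{p-2}\rho_{\varepsilon,R}$, exploiting the favourable sign of the gradient term since $(P_\varepsilon)'\geq 0$, to deduce \eqref{eq: non-exp}, and letting $p\to\infty$ for the $L^\infty$-bound; (iii) subtract the equations for $\rho_{\varepsilon,R}$ and $\hat\rho_{\varepsilon,R}$, test with a smooth approximation $\mathrm{sgn}_\delta\!\bigl(P_\varepsilon(\rho_{\varepsilon,R})-P_\varepsilon(\hat\rho_{\varepsilon,R})\bigr)$, and let $\delta\downarrow 0$ to obtain \eqref{L1-est} at the approximate level (whence, a posteriori, uniqueness).

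To pass to the limit $\varepsilon\downarrow 0$ I would rely on an Aubin--Lions-type compactness argument: the $L^2$-bound on $\nabla P_\varepsilon(\rho_\varepsilon)$ gives spatial compactness of $P_\varepsilon(\rho_\varepsilon)$, while the equation itself controls $\partial_t\rho_\varepsilon$ in a negative-order Sobolev space; combining them yields strong convergence $\rho_\varepsilon\to\rho$ in $L^1_{\mathrm{loc}}$, which by \eqref{est-peps-1} and the uniform $L^\infty$-bound allows one to identify the limit in \eqref{sol-p2}, proving \eqref{conv-epsilon}. Lower semicontinuity preserves \eqref{eest} and \eqref{eq: non-exp}, the $L^1$-contraction \eqref{L1-est} passes to the limit directly, and $L^1$-continuity of $t\mapsto\rho(t)$ follows from the weak formulation combined with that contraction. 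For mass conservation under \eqref{ricci-K}, the strategy is to test \eqref{sol-p2} with cut-offs $\eta_R\in C^\infty_c(\M^n)$ satisfying $\eta_R\equiv 1$ on $B_R(o)$, $\mathrm{supp}\,\eta_R\subset B_{2R}(o)$, $|\nabla\eta_R|\lesssim R^{-1}$. Bishop--Gromov under the Ricci lower bound gives at most exponential volume growth of $B_R(o)$, which, paired with the $L^\infty\cap L^1$ bounds on $\rho$ and hence on $P(\rho)$ via \eqref{below-above-prime}, forces the flux term $\int_0^T\!\int_{\M^n}\langle\nabla P(\rho),\nabla\eta_R\rangle\,\d\mathcal{V}\,\d t$ to vanish as $R\to\infty$, yielding \eqref{cons-mass-th}. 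The same cut-off argument, applied to $\rho_\varepsilon$ and combined with the uniform $L^1$-contraction at level $\varepsilon$, provides tightness at infinity and upgrades \eqref{conv-epsilon} to \eqref{conv-epsilon-bis}.

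The main obstacle I expect is the passage to the limit in the nonlinearity $P_\varepsilon(\rho_\varepsilon)\to P(\rho)$ inside the weak formulation, which requires \emph{strong}, not merely weak, convergence of $\rho_\varepsilon$. The Aubin--Lions step delivers this but only if one chooses the negative-order Sobolev space on $\M^n$ correctly, so that the bound on $\partial_t\rho_\varepsilon$ pairs with the spatial compactness coming from $\nabla P_\varepsilon(\rho_\varepsilon)\in L^2$; on a noncompact manifold, this forces one to argue locally and then globalize via the tightness provided by mass conservation. A parallel technical nuisance is rigorously justifying the formal multiplications in the degenerate regime, which is handled by Steklov time-averages together with truncations, the limits in $\delta$, $p$, $R$ and $\varepsilon$ being taken in the correct order so that all integrations by parts are legal.
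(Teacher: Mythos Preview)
Your overall architecture---double approximation via $P_\varepsilon$ on an exhaustion by bounded domains, the standard battery of a priori estimates, then passage to the limit---is exactly the paper's. For compactness you propose Aubin--Lions, whereas the paper instead tracks the auxiliary function $\Upsilon_\varepsilon(\rho)=\int_0^\rho\sqrt{P_\varepsilon'(r)}\,\d r$: testing with $\zeta P_\varepsilon'(\rho)\partial_t\rho$ and with $\rho$ itself gives local $H^1$ bounds on $\Upsilon_\varepsilon(\rho_{\varepsilon,k})$ in space-time, hence pointwise convergence along subsequences, which identifies the nonlinear limit directly. Both routes work; the $\Upsilon$-trick is slightly cleaner because it avoids having to invert $P_\varepsilon$ after compactifying $P_\varepsilon(\rho_\varepsilon)$.

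There are, however, two genuine gaps. First, your mass-conservation argument does not close: under \eqref{ricci-K} with $K>0$, Bishop--Gromov only yields an \emph{exponential} upper bound on $\mathcal V(B_R)$, so pairing $|\nabla\eta_R|\lesssim R^{-1}$ with $\nabla P(\rho)\in L^2$ controls the flux by $CR^{-1}\|\nabla P(\rho)\|_{L^2}\sqrt{\mathcal V(B_{2R}\setminus B_R)}$, which need not vanish. (You also invoke \eqref{below-above-prime}, not assumed in the proposition.) The paper bypasses this by integrating by parts once more and using the Bianchi--Setti cut-offs $\phi_R$ satisfying $|\Delta\phi_R|\le C/R$: the error becomes $\int_0^T\!\int P(\rho)\,\Delta\phi_R$, bounded by $CR^{-1}\|P(\rho)\|_{L^1}$, which is finite since $\rho\in L^1\cap L^\infty$ and $P\in C^1$ with $P(0)=0$. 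This Laplacian cut-off is the essential input from \eqref{ricci-K}. Second, uniqueness ``a posteriori from the $L^1$-contraction'' only compares solutions obtained as limits of your scheme; it does not exclude another weak energy solution. The paper handles uniqueness separately by Ole\u{\i}nik's trick, testing the difference of two arbitrary weak energy solutions with $\eta(x,t)=\int_t^T[P(\rho)-P(\hat\rho)]\,\d s$, which is admissible in Definition~\ref{defsol} and forces $\rho=\hat\rho$ by strict monotonicity of $P$. Since your $L^1$-continuity argument and the upgrade \eqref{conv-epsilon}$\to$\eqref{conv-epsilon-bis} both lean on mass conservation, these gaps cascade.
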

\begin{proof}
We start by recalling that uniqueness of weak energy solutions follows from a standard trick due to Ole\u{\i}nik: given $T>0 $, one plugs the (admissible) test function 
$$
\eta(x,t) = \int_t^T \left[ P(\rho(x,s)) - P(\hat{\rho}(x,s)) \right] \d s \, , \qquad (x,t) \in \mathbb{M}^n \times [0,T] \, ,
$$
into the weak formulation satisfied by the difference between $ \rho $ and $ \hat{\rho} $ (the latter being two possibly different solutions corresponding to the same initial datum), thus obtaining
\begin{equation}\label{olei}
\begin{aligned}
& \int_0^T \int_{\mathbb{M}^n} \left( \rho-\hat{\rho} \right) \left( P(\rho) - P(\hat{\rho}) \right) \d\mathcal{V} \d t \\
= & \int_0^T \int_{\mathbb{M}^n} \left\langle \nabla \left[ P(\rho(x,t)) - P(\hat{\rho}(x,t)) \right] , \int_t^T \nabla \left[ P(\rho(x,s)) - P(\hat{\rho}(x,s)) \right] \d s \right\rangle \d\mathcal{V}(x) \d t \, .
\end{aligned}
\end{equation}
A simple time integration in \eqref{olei} yields 
$$
\int_0^T \int_{\mathbb{M}^n} \left( \rho-\hat{\rho} \right) \left( P(\rho) - P(\hat{\rho}) \right) \d\mathcal{V} \d t + \frac12 \int_{\mathbb{M}^n} \left| \int_0^T \nabla \left[ P(\hat{\rho}(x,s)) - P(\rho(x,s)) \right] \d s \right|^2 \d\mathcal{V}(x) = 0 \, ,
$$
which ensures that $ \rho = \hat{\rho} $ given the strict monotonicity of $ \rho \mapsto P(\rho) $ and the arbitrariness of $ T $. Note that here we have only used the validity of \eqref{sol-p2} for functions $\eta \in W^{1,2}((0,T);W^{1,2}(\mathbb{M}^n))$. Furthermore, the fact that $\rho_0 \in L^1(\mathbb{M}^n) \cap L^\infty(\mathbb{M}^n )$ is unimportant. These observations will be useful in the proof of Proposition \ref{prop: sol AMS} below.  

As concerns the construction of a weak energy solution, we will not provide a complete proof since the procedure is quite standard: see e.g.~\cite[Theorem 5.7 and Lemma 5.8]{V07} or \cite[Theorems 3.4 and 3.7]{GMP13} in Euclidean or weighted-Euclidean contexts. The basic idea consists first of solving problem \eqref{pme-reg} in a sequence $ D_k $ of bounded regular domains that form an exhaustion for $ \mathbb{M}^n $ (see the proof of Lemma \ref{compact-support} below for more details on such a sequence), with homogeneous Dirichlet boundary conditions on $ \partial D_k $. In order to do this, it is convenient to make a further approximation by replacing $ P $ with $ P_\varepsilon $: let us denote by $ \rho_{\varepsilon,k} $ the corresponding solutions, which are therefore regular enough (up to approximating also the initial datum $ \rho_0 $ and approximating further $ P_\varepsilon $ in case $ P' $ is merely continuous -- we skip this passages). A first key estimate is provided by the energy inequality itself, which is obtained upon multiplying the differential equation by $ P_\varepsilon(\rho_{\varepsilon,k}) $ and integrating by parts:
\begin{equation}\label{eest-2}
\int_0^T \int_{D_k} \left| \nabla{P_\varepsilon(\rho_{\varepsilon,k})} \right|^2 \d\mathcal{V} \d t + \int_{D_k} \Psi_\varepsilon(\rho_{\varepsilon,k}(x,T)) \, \d\mathcal{V}(x) = \int_{D_k} \Psi_\varepsilon(\rho_0) \, \d\mathcal{V} \qquad \forall T > 0 \, ,
\end{equation}
where $ \Psi_\varepsilon(\rho) := \int_0^\rho P_\varepsilon(r) \, \d r  $. Note that for the moment the energy inequality is in fact an identity. Another crucial estimate involves time derivatives and is obtained by multiplying the differential equation by $ \zeta \,  P^\prime_\varepsilon(\rho_{\varepsilon,k}) \, \partial_t \rho_{\varepsilon,k} $ and again integrating by parts, where $ \zeta \in C_c^\infty((0,+\infty)) $ is any cut-off function that depends only on time and satisfies $ 0 \le \zeta \le 1 $; this yields
\begin{equation}\label{eest-3}
\int_0^T \int_{D_k} \zeta \left| \partial_t \Upsilon_\varepsilon(\rho_{\varepsilon,k}) \right|^2 \d\mathcal{V} \d t = \frac{1}{2} \int_0^T \int_{D_k} \zeta^\prime  \left| \nabla{P_\varepsilon(\rho_{\varepsilon,k})} \right|^2 \d\mathcal{V} \d t \qquad \forall T > 0 \, ,
\end{equation}
where $ \Upsilon_\varepsilon(\rho) := \int_0^\rho \sqrt{P^\prime_\varepsilon(r)}\, \d r $. Finally, by using $ \rho_{\varepsilon,k} $ itself as a test function we obtain
\begin{equation}\label{eest-4}
\int_0^T \int_{D_k} \left| \nabla{\Upsilon_\varepsilon(\rho_{\varepsilon,k})} \right|^2 \d\mathcal{V} \d t + \frac{1}{2} \int_{D_k} \rho_{\varepsilon,k}(x,T)^2 \, \d\mathcal{V}(x) = \frac{1}{2} \int_{D_k} \rho_0^2 \, \d\mathcal{V} \qquad \forall T > 0 \, ;
\end{equation}
a similar computation ensures that in fact all $ L^p(D_k) $ norms do not increase: 
\begin{equation}\label{eq: non-exp-k}
\left\| \rho_{\varepsilon,k}(t) \right\|_{L^p(D_k)} \le \left\| \rho_0 \right\|_{L^p(D_k)}  \qquad \forall t > 0 \, , \quad \forall p \in [1,\infty] \, .
\end{equation}
If $ \hat{\rho}_{\varepsilon,k} $ is another (approximate) solution corresponding to a different nonnegative $ \hat{\rho}_0 \in L^1(\mathbb{M}^n) \cap L^\infty(\mathbb{M}^n) $, the $ L^1 $-contraction property simply follows upon multiplying the differential equation satisfied by $ (\rho_{\varepsilon,k} - \hat{\rho}_{\varepsilon,k}) $ formally by the test function $ \operatorname{sign}(\rho_{\varepsilon,k} - \hat{\rho}_{\varepsilon,k}) $ and integrating: this leads to
\begin{equation*}\label{L1-est-k}
\left\| \rho_{\varepsilon,k}(t)-\hat{\rho}_{\varepsilon,k}(t) \right\|_{L^1(D_k)} \le \left\| \rho_0-\hat{\rho}_0 \right\|_{L^1(D_k)} \qquad \forall t > 0 \, .
\end{equation*}
Actually, to be more rigorous, the sign function should further be approximated by regular nondecreasing functions, see \cite[Proposition 3.5]{V07}. We are now ready to pass to the limit into the weak formulation satisfied by each $ \rho_{\varepsilon,k} $, which reads  
\begin{equation}\label{sol-KE}
\int_0^T \int_{D_k} \rho_{\varepsilon,k} \,  \partial_t\eta \, \d\mathcal{V} \d t = -\int_{D_k} \rho_0(x) \, \eta(x,0) \, \d\mathcal{V}(x) + \int_0^T \int_{D_k} \left\langle \nabla P_\varepsilon(\rho_{\varepsilon,k}) \, , \nabla\eta \right\rangle \d\mathcal{V} \d t
\end{equation}
for every $ T>0 $ and every $ \eta \in W^{1,2}((0,T);L^2(D_k)) \cap L^2((0,T);W^{1,2}_0(D_k)) $ such that $ \eta(T) = 0 $. Indeed, the energy estimate \eqref{eest-2} ensures that $ \{ \nabla P_\varepsilon(\rho_{\varepsilon,k}) \}_{\varepsilon>0} $ weakly converges (up to subsequences) as $ \varepsilon \downarrow 0 $ to some vector field $ \vec{w} $ in $ L^2(D_k \times (0,T)) $, whereas \eqref{eq: non-exp-k} yields weak convergence of $ \{ \rho_{\varepsilon,k} \}_{\varepsilon>0} $ for instance in $ L^2(D_k \times (0,T)) $ to some limit function $ \rho_k $, still up to subsequences. 
On the other hand, estimates \eqref{eest-2}--\eqref{eest-4} guarantee that $ \{ \Upsilon_\varepsilon(\rho_{\varepsilon,k}) \}_{\varepsilon>0} $ is locally bounded in $ H^1(D_k \times (0,T)) $; in particular it admits a subsequence that converges pointwise almost everywhere. 
Since $ \Upsilon_\varepsilon $, $ \Upsilon_\varepsilon^{-1} $ and $ P_\varepsilon $ are continuous, monotone increasing functions converging pointwise (and therefore locally uniformly) as $ \varepsilon \downarrow 0 $ to their continuous limits $ \Upsilon(\rho) := \int_0^\rho \sqrt{P^\prime(r)}\, \d r $, $ \Upsilon^{-1} $ and $P$, respectively, we can assert that also $ \{ \rho_{\varepsilon,k} \}_{\varepsilon>0} $ and $ \{ P_\varepsilon(\rho_{\varepsilon,k}) \}_{\varepsilon>0} $ converge pointwise, up to subsequences. 
This is the key to guarantee the identification $ \vec{w} = \nabla{P(\rho_k)} $, so that by letting $ \varepsilon \downarrow 0 $ in \eqref{sol-KE} we end up with 
\begin{equation*}\label{sol-K}
\int_0^T \int_{D_k} \rho_{k} \,  \partial_t\eta \, \d\mathcal{V} \d t = -\int_{D_k} \rho_0(x) \, \eta(x,0) \, \d\mathcal{V}(x) + \int_0^T \int_{D_k} \left\langle \nabla P(\rho_{k}) \, , \nabla\eta \right\rangle \d\mathcal{V} \d t \, ,
\end{equation*}
which is valid for every $ T>0 $ and the same type of test functions $ \eta $ as in \eqref{sol-KE}. Note that all the above estimates pass to the limit as $ \varepsilon \downarrow 0 $ e.g.~by lower semicontinuity, yielding 
\begin{equation}\label{eest-2-k}
\int_0^T \int_{D_k} \left| \nabla{P(\rho_{k})} \right|^2 \d\mathcal{V} \d t + \int_{D_k} \Psi(\rho_{k}(x,T)) \, \d\mathcal{V}(x) \le \int_{D_k} \Psi(\rho_0) \, \d\mathcal{V} \qquad \forall T > 0 \, ,
\end{equation}
\begin{equation}\label{eest-3-k}
\int_0^T \int_{D_k} \zeta \left| \partial_t \Upsilon(\rho_{k}) \right|^2 \d\mathcal{V} \d t \le \frac{\max_{\mathbb{R}^+}|\zeta^\prime|}{2} \int_{D_k} \Psi(\rho_0) \, \d\mathcal{V} \qquad \forall T > 0 \, ,
\end{equation}
\begin{equation}\label{eest-4-k}
\int_0^T \int_{D_k} \left| \nabla{\Upsilon(\rho_{k})} \right|^2 \d\mathcal{V}  \d t + \frac{1}{2} \int_{D_k} \rho_{k}(x,T)^2 \, \d\mathcal{V}(x) \le \frac{1}{2} \int_{D_k} \rho_0^2 \, \d\mathcal{V} \qquad \forall T > 0 \, ,
\end{equation}
\begin{equation}\label{eq: non-exp-kk}
\left\| \rho_{k}(t) \right\|_{L^p(D_k)} \le \left\| \rho_0 \right\|_{L^p(D_k)}  \qquad \forall t > 0 \, , \quad \forall p \in [1,\infty] \, ,
\end{equation}
\begin{equation}\label{L1-est-kk}
\left\| \rho_{k}(t)-\hat{\rho}_{k}(t) \right\|_{L^1(D_k)} \le \left\| \rho_0-\hat{\rho}_0 \right\|_{L^1(D_k)} \qquad \forall t > 0 \, .
\end{equation}
At this point we are allowed to let $ k\to \infty $, so that $ D_k $ will eventually become the whole manifold $ \mathbb{M}^n $. By exploiting estimates \eqref{eest-2-k}--\eqref{L1-est-kk} and reasoning similarly to the previous step, we can easily deduce that $ \{ \rho_k \}_{k \in \mathbb{N}} $ (extended to zero in $ \mathbb{M}^n \setminus D_k $) suitably converges as $ k \to \infty $ to the energy solution $ \rho $ of \eqref{pme-reg}, which therefore satisfies \eqref{eest}, \eqref{eq: non-exp} and \eqref{L1-est} (upon repeating the same procedure starting from $ \hat{\rho}_0 $), along with
\begin{equation}\label{eest-3-k-last}
\int_0^T \int_{\mathbb{M}^n} \zeta \left| \partial_t \Upsilon(\rho) \right|^2 \d\mathcal{V} \d t \le \frac{\max_{\mathbb{R}^+}|\zeta^\prime|}{2} \int_{\mathbb{M}^n} \Psi(\rho_0) \, \d\mathcal{V} \qquad \forall T > 0 \, ,
\end{equation}
\begin{equation}\label{eest-4-k-last}
\int_0^T \int_{\mathbb{M}^n} \left| \nabla{\Upsilon(\rho)} \right|^2 \d\mathcal{V} \d t + \frac{1}{2} \int_{\mathbb{M}^n} \rho(x,T)^2 \, \d\mathcal{V}(x) \le \frac{1}{2} \int_{\mathbb{M}^n} \rho_0^2 \, \d\mathcal{V} \qquad \forall T > 0 \, .
\end{equation} 
Note that since $ P \in C^1([0,+\infty)) $ and $ \rho_0 \in L^1(\mathbb{M}^n)\cap L^{\infty}(\mathbb{M}^n) $ the r.h.s.~of \eqref{eest-3-k-last} is surely finite. We are thus left with proving $ L^1 $-continuity, mass conservation and \eqref{conv-epsilon}--\eqref{conv-epsilon-bis}. 

In order to establish the mass-conservation property, we take advantage of a recent result contained in \cite{BS}, which ensures that under \eqref{ricci-K} for every $ R \ge 1 $ there exist positive constants $ C,\gamma $ independent of $R$ and a nonnegative function $ \phi_R \in C^\infty_c(\mathbb{M}^n) $ such that $ \phi_R = 1 $ in $ B_R(o) $, $ \mathrm{supp} \, \phi_R \subset B_{\gamma R}(o) $ (let $ o \in \mathbb{M}^n $ be a fixed pole), $ \phi_R \le 1  $ and $ \left| \Delta \phi_R \right| \le {C}/{R} $. See in particular \cite[Corollary 2.3]{BS}. So let us plug into \eqref{sol-p2} the test function $ \eta(x,t) = \phi_R(x) \xi(t) $, where $ \xi \in C^\infty_c([0,T)) $ with $ \xi(0)=1 $; we obtain
\begin{equation}\label{sol-mass}
\int_0^T \int_{\mathbb{M}^n} \rho \, \phi_R \, \xi^\prime \, \d\mathcal{V} \d t = -\int_{\mathbb{M}^n} \rho_0 \, \phi_R \, \d\mathcal{V} + \int_0^T \int_{\mathbb{M}^n} \xi \left\langle \nabla P(\rho) \, , \nabla \phi_R \right\rangle \d\mathcal{V} \d t \, .
\end{equation}
If we suitably let $ \xi \to \chi_{[0,T]} $ and we integrate by parts the second term in the r.h.s.~of \eqref{sol-mass}, we end up with 
\begin{equation*}\label{sol-mass-2}
\int_{\mathbb{M}^n} \rho(x,T) \, \phi_R(x) \, \d\mathcal{V}(x) \d t = \int_{\mathbb{M}^n} \rho_0 \, \phi_R \, \d\mathcal{V} + \int_0^T \int_{\mathbb{M}^n} P(\rho) \, \Delta\phi_R \, \d\mathcal{V} \d t \, .
\end{equation*}
By letting $ R \to \infty $, exploiting the integrability properties of $ \rho $ (note that $ P(\rho) \in L^1(\M^n \times(0,T)) $) along with the above estimate on $ \Delta \phi_R $ and the arbitrariness of $T$, we deduce \eqref{cons-mass-th}.

As concerns $ L^1 $-continuity, as a first step we point out that it could be proved by means of an alternative construction of weak energy solutions that takes advantage of time-discretization and the Crandall-Liggett Theorem: see e.g.~\cite[Remark 3.7]{FM}. More comments on such a construction will be made in Remark \ref{R} at the end of this section. However, in the present framework it can be obtained in a more direct fashion, at least under \eqref{ricci-K}. Indeed, if we let $ \zeta \to \chi_{[0,T]} $ in \eqref{eest-3}, upon a passage to the limit as $ \varepsilon \downarrow 0 $ and $  k\to\infty $ we infer that
\begin{equation*}\label{cont-eest}
\int_0^T \int_{\mathbb{M}^n} \left| \partial_t \Upsilon(\rho) \right|^2 \d\mathcal{V} \d t + \frac{1}{2} \int_{\mathbb{M}^n} \left| \nabla{P(\rho(x,T))} \right|^2 \d\mathcal{V}(x) \le \frac 12 \int_{\mathbb{M}^n} \left| \nabla{P(\rho_0)} \right|^2 \d\mathcal{V} \qquad \forall T > 0 \, .
\end{equation*}
This in particular ensures that, at least for initial data $ \rho_0 \in C^1_c(\mathbb{M}^n) $, the curve $ t \mapsto \Upsilon(\rho(t)) $ is in $ W^{1,2}\!\left((0,T);L^2(\M^n) \right) $, which further guarantees that $ \rho(t) \to \rho_0 $ as $ t \downarrow 0 $ in $ L^1_{\mathrm{loc}}(\mathbb{M}^n) $ (recall the uniform boundedness of $\rho$); on the other hand, the just proved mass-conservation property implies $ \| \rho(t) \|_{L^1(\mathbb{M}^n)} = \| \rho_0 \|_{L^1(\mathbb{M}^n)} $ for all $ t>0 $, so that the convergence does occur in $ L^1(\mathbb{M}^n) $. By virtue of the contraction estimate \eqref{L1-est}, the $ L^1 $-continuity of $ t \mapsto \rho(t) $ at $ t=0 $ yields the $ L^1 $-continuity at any other time, so that in fact $ \rho \in C([0,+\infty);L^1(\mathbb{M}^n)) $. This holds provided $ \rho_0 \in C^1_c(\mathbb{M}^n) $: for a general initial datum $ \rho_0 \in L^1(\mathbb{M}^n) \cap L^\infty(\mathbb{M}^n) $, if we take a sequence $ \{ \rho_{j,0} \}_{j \in \N} \subset C^1_c(\mathbb{M}^n)  $ such that $ \rho_{j,0} \to \rho_0 $ in $ L^1(\mathbb{M}^n) $, with $ \rho_{j,0} \ge 0 $, still the contraction estimate \eqref{L1-est} ensures that the corresponding sequence of energy solutions $ \{ \rho_j \}_{j \in \mathbb{N}} $ converges to $ \rho $ in $ L^\infty(\mathbb{R}^+;L^1(\mathbb{M}^n)) $, hence also $ t \mapsto \rho(t) $ belongs to $ C([0,+\infty);L^1(\mathbb{M}^n)) $ (so that a posteriori we have the right to write all the above estimates for \emph{every} rather than \emph{almost every} $t$ or $T$). 

Let us finally establish the approximation properties \eqref{conv-epsilon}--\eqref{conv-epsilon-bis}. Given $ \varepsilon>0 $, if $ \rho_\varepsilon $ is the weak energy solution of \eqref{pme-approx} then it satisfies \eqref{eest} (with $ P\equiv P_\varepsilon $ and $ \Psi \equiv\Psi_\varepsilon $), \eqref{eq: non-exp}, \eqref{cons-mass-th} and \eqref{eest-3-k-last}--\eqref{eest-4-k-last} (with $ \Upsilon \equiv \Upsilon_\varepsilon $ and $ \Psi \equiv \Psi_\varepsilon $): by proceeding as in the first part of the proof, one can easily infer that $ \{ \rho_\varepsilon \}_{\varepsilon>0} $ converges pointwise almost everywhere in $ \mathbb{M}^n \times \mathbb{R}^+ $ as $ \varepsilon \downarrow 0 $ to $ \rho $, up to subsequences. This implies convergence in $ L^1_{\mathrm{loc}}(\mathbb{M}^n) $ for a.e.~$ t \in \mathbb{R}^+ $, given the uniform boundedness of $ \{ \rho_\varepsilon \}_{\varepsilon>0} $. In order to show that such convergence occurs at \emph{every} $ t $, note that by \eqref{eest-3-k-last} the family $ \{ \Upsilon_\varepsilon(\rho_\varepsilon) \}_{\varepsilon>0} $ is equicontinuous with values in $ L^2(\mathbb{M}^n) $, at least for times bounded away from zero:
$$
\left\|\Upsilon_\varepsilon(\rho_\varepsilon(t)) - \Upsilon_\varepsilon(\rho_\varepsilon(s)) \right\|_{L^2(\mathbb{M}^n)} \le \sqrt{t-s} \left\| \partial_t \Upsilon_\varepsilon(\rho_\varepsilon) \right\|_{L^2(\mathbb{M}^n\times(s,t))} \qquad \forall t>s > 0 \, ;
$$
by the Ascoli-Arzel\`a theorem we then deduce that $ \{\Upsilon_\varepsilon(\rho_\varepsilon(t))\}_{\varepsilon>0} $ converges locally in $ L^2(\mathbb{M}^n) $ to $ \Upsilon(\rho(t)) $ for every $t>0$, whence the convergence of $ \{ \rho_\varepsilon(t) \}_{\varepsilon>0} $ in $ L^1_{\mathrm{loc}}(\mathbb{M}^n) $, thanks to the just recalled uniform boundedness of $ \{ \rho_\varepsilon \}_{\varepsilon>0} $. Finally, the global convergence under \eqref{ricci-K} is again a consequence of mass conservation. 
\end{proof}

As mentioned above, a fundamental ingredient to the strategy of proof of Theorem \ref{main-result} (see  Section \ref{sec:str}) is the smoothing effect, namely a quantitative $ L^1(\mathbb{M}^n) $--$ L^\infty(\mathbb{M}^n) $ regularization property of the nonlinear evolution that depends only on the $ L^1 $ norm of the initial datum. To this end we need to ask some crucial extra assumptions: the validity of the Sobolev-type inequality \eqref{Sob} and a bound from below on the degeneracy of $ P $ given by the left-hand side of \eqref{below-above-prime}. The proof is largely inspired from \cite[Section 4]{FM}, where a Moser-type iteration is exploited (see also references quoted therein); nevertheless, here we are also interested in keeping track of the dependence of the multiplying constants on $ m $ as $ m \downarrow 1 $. 

\begin{proposition}[Smoothing effect]\label{smooth-approx}
Let $ \mathbb{M}^n $ ($ n \ge 3 $) comply with \eqref{Sob}. Let $ P $ comply with \eqref{increas} and the left-hand inequality in \eqref{below-above-prime}. Let $ \varepsilon>0 $ and 
 $ \rho_0 \in {L}^1(\mathbb{M}^n) \cap {L}^\infty(\mathbb{M}^n) $ be nonnegative. Then the weak energy solution $ \rho_\varepsilon $ of \eqref{pme-approx}, where $P_{\varepsilon}$ is defined by \eqref{P-app}, satisfies the \emph{smoothing estimate} 
\begin{equation}\label{smoothing-one}
\left\| \rho_\varepsilon(t) \right\|_{{L}^\infty\left(\mathbb{M}^n\right)} \le C \left( t^{-\frac{n}{2+n(m-1)}} \left\| \rho_0 \right\|_{{L}^1(\mathbb{M}^n)}^{\frac{2}{2+n(m-1)}} + \left\| \rho_0 \right\|_{{L}^1(\mathbb{M}^n)} \right) \qquad \forall t>0 
\end{equation}
provided
\begin{equation}\label{cond-eps-one}
\left\| \rho_0 \right\|_{{L}^\infty(\mathbb{M}^n)} \le \frac{1}{\varepsilon} \, ,
\end{equation}
where $ C\geq 1 $ is a constant depending only on $c_0$, $C_S$, $ n $ and independent of $ m $ ranging in a bounded subset of $ (1,+\infty) $. As a consequence, if $ \rho $ is the weak energy solution of \eqref{pme-reg} starting from the same initial datum, there holds
\begin{equation}\label{smoothing-limit}
\left\| \rho(t) \right\|_{{L}^\infty\left(\mathbb{M}^n\right)} \le C \left( t^{-\frac{n}{2+n(m-1)}} \left\| \rho_0 \right\|_{{L}^1(\mathbb{M}^n)}^{\frac{2}{2+n(m-1)}} + \left\| \rho_0 \right\|_{{L}^1(\mathbb{M}^n)} \right) \qquad \forall t>0 \, .
\end{equation}
\end{proposition}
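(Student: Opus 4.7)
The plan is to carry out a Moser-type iteration on the nondegenerate solution $\rho_\varepsilon$, exploiting the Sobolev inequality \eqref{Sob}, and then to pass to the limit $\varepsilon \downarrow 0$ via the approximation property \eqref{conv-epsilon}. First I would note that, thanks to the nonexpansivity \eqref{eq: non-exp} and the assumption \eqref{cond-eps-one}, the solution $\rho_\varepsilon(t)$ stays in $[0,1/\varepsilon]$ along the flow, so that the lower bound $P_\varepsilon^\prime(\rho_\varepsilon) \ge c_0 \, m \, \rho_\varepsilon^{m-1}$ holds pointwise by virtue of \eqref{est-peps-3}. Multiplying \eqref{pme-approx} by $\rho_\varepsilon^{p-1}$ (with a mild regularization/truncation to make the computation rigorous, which is permissible since $\rho_\varepsilon \in L^\infty$ and $\nabla P_\varepsilon(\rho_\varepsilon) \in L^2$) and integrating by parts yields, for every $p>1$,
\[
\frac{\d}{\d t} \left\| \rho_\varepsilon(t) \right\|_{L^p(\M^n)}^p \le - \frac{4\, c_0 \, m \, p(p-1)}{(p+m-1)^2} \int_{\M^n} \left| \nabla \rho_\varepsilon^{(p+m-1)/2} \right|^2 \d \mathcal{V} \, .
\]
Applying \eqref{Sob} to $f = \rho_\varepsilon^{(p+m-1)/2}$ one obtains a closed differential inequality of the form
\[
\frac{\d}{\d t} \left\| \rho_\varepsilon \right\|_{L^p}^p \le - \alpha_p \left\| \rho_\varepsilon \right\|_{L^{\sigma(p+m-1)}}^{p+m-1} + \beta_p \left\| \rho_\varepsilon \right\|_{L^{p+m-1}}^{p+m-1} ,
\]
with $\sigma = n/(n-2)$ and constants $\alpha_p,\beta_p$ that can be tracked explicitly and behave like $c_0/p$ and $c_0$, respectively, for $p$ large.

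The heart of the argument is a Moser iteration along a geometric sequence of exponents $p_k$ with $p_{k+1} = \sigma(p_k + m-1)$, performed on a sequence of time steps $t_{k+1} - t_k = c \, 2^{-k} t$. On each such interval, the lower-order term $\left\| \rho_\varepsilon \right\|_{L^{p+m-1}}$ is controlled by interpolation between $L^1$ (bounded by $\left\|\rho_0\right\|_{L^1}$ thanks to \eqref{eq: non-exp}) and the current iterate $L^{p_k}$, and is absorbed into the leading term; this yields a recursive smoothing estimate relating $\left\| \rho_\varepsilon(t_{k+1}) \right\|_{L^{p_{k+1}}}$ to $\left\| \rho_\varepsilon(t_k) \right\|_{L^{p_k}}$ and to $\left\|\rho_0\right\|_{L^1}$. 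Taking logarithms, summing the resulting telescopic series (whose convergence rests on $p_k \sim \sigma^k$ and the geometric decay of the time steps), and letting $k\to\infty$ produces \eqref{smoothing-one}. The additive $\left\|\rho_0\right\|_{L^1}$ term is precisely the trace of the lower-order $\left\|f\right\|_{L^2}$ contribution in \eqref{Sob} and would disappear under the scale-invariant inequality \eqref{Sob-euc}.

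The main obstacle is the delicate bookkeeping needed to keep the overall constant $C$ bounded as $m \downarrow 1$: one has to show that the series $\sum_k p_k^{-1} \log(\alpha_{p_k}^{-1} \vee \beta_{p_k})$ and the analogous time-weighted sum converge \emph{uniformly} in $m$ in a neighbourhood of $1$. This is feasible since $\alpha_{p_k}$ and $\beta_{p_k}$ depend on $m$ only through factors of the form $(p_k + m-1)$, which remain comparable to $p_k$ for $m$ bounded, so the $m=1$ analysis provides a uniform template. Finally, once \eqref{smoothing-one} is established for $\rho_\varepsilon$, the bound \eqref{smoothing-limit} for $\rho$ follows by letting $\varepsilon \downarrow 0$ via \eqref{conv-epsilon}: since the right-hand side of \eqref{smoothing-one} is independent of $\varepsilon$, and the $L^\infty$ norm is lower semicontinuous under a.e.~subsequential convergence (guaranteed by $L^1_{\mathrm{loc}}$ convergence), the uniform bound transfers directly to the limit.
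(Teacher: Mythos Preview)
Your approach is the same Moser-type iteration as the paper's, and the overall architecture is correct. Two points are worth flagging.

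First, a concrete slip: you write that the lower-order term $\|\rho_\varepsilon\|_{L^{p_k+m-1}}$ is ``controlled by interpolation between $L^1$ \dots\ and the current iterate $L^{p_k}$'', but since $p_k + m - 1 > p_k$ for $m>1$ this interpolation is impossible. The paper circumvents this differently: it uses the Gagliardo--Nirenberg form \eqref{NGN-bis} of \eqref{Sob} with the choices $s = 2p_j/(m+p_j-1)$ and $r = 2 + 2s/n$, which produces the recursion $p_{j+1} = \tfrac{n+2}{n}\,p_j + m-1$ (not your $p_{k+1} = \tfrac{n}{n-2}(p_k+m-1)$) and leaves a residual dependence on $\|\rho_0\|_{L^\infty}$ in the iterate. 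That $L^\infty$ dependence is then removed by an iterated time-shift argument (the inequality \eqref{est-iter-2} is applied with origin shifted to $t/2^{j+1}$ and summed geometrically). The same shift device is used a second time to descend from the $L^{p_0}\!\to\!L^\infty$ estimate ($p_0\ge 2$) to the desired $L^{1}\!\to\!L^\infty$ estimate. This time-shift mechanism is the main technical ingredient your sketch is missing, and it is also what makes the uniformity of $C$ as $m\downarrow 1$ transparent.

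Second, the paper carries out all the computations on the doubly approximated solutions $\rho_{\varepsilon,k}$ posed on bounded domains $D_k$, where enough regularity is available to justify testing with powers and integrating by parts; your ``mild regularization/truncation'' is really a placeholder for this step, and only at the end are the limits $k\to\infty$ and $\varepsilon\downarrow 0$ taken.
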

\begin{proof}
Given $t>0$, we consider the sequence of time steps $t_j:=(1-2^{-j})t$, for all $ j \in \mathbb{N} $, so that $t_0=0$ and $t_{\infty}=t$. Associated with $ \{ t_j \}_{j \in \mathbb{N}}$, we take an increasing sequence of exponents $ \{ p_j \}_{j \in \mathbb{N}} $ to be defined later, such that $p_0 \ge 2$ and $p_{\infty}=\infty$. Throughout, we will work with the approximate solutions $ \{ \rho_{\varepsilon,k} \}_{\varepsilon>0,k \in \mathbb{N} } $ defined in the proof of Proposition \ref{exunimain}, so that the computations we will perform below are justified. The key starting point consists of multiplying the differential equation in \eqref{pme-approx} by the $ (p_j-1) $-th power of $\rho_{\varepsilon,k}$, integrating by parts in $ D_k\times[t_j,t_{j+1}]$, using \eqref{power-assumption} (only the bound from below) and \eqref{est-peps-3} along with \eqref{eq: non-exp-k} and \eqref{cond-eps-one}, so as to obtain
\begin{equation}\label{la3.2}
\begin{aligned}
\frac{4 \, c_0 \, m \, p_j \,(p_j-1)}{(m+p_j-1)^2} \, \int_{t_j}^{t_{j+1}} \int_{D_k} \left| \nabla \! \left(\rho_{\varepsilon,k}^{{(m+p_j-1)}/{2}}\right) \right|^2 \d\mathcal{V} \d t & \le \\
 p_j \, (p_j-1) \, \int_{t_j}^{t_{j+1}} \int_{D_k}\rho_{\varepsilon,k}^{p_j-2} \, P^\prime_\varepsilon(\rho_{\varepsilon,k}) \left| \nabla \rho_{\varepsilon,k} \right|^2 \d\mathcal{V} \d t & = \left\| \rho_{\varepsilon,k}(t_j) \right\|_{p_j}^{p_j} - \left\| \rho_{\varepsilon,k}(t_{j+1}) \right\|_{p_j}^{p_j} \le  \Vert \rho_{\varepsilon,k}(t_{j})\Vert_{p_j}^{p_j} \, .
\end{aligned}
\end{equation}
For readability's sake, we set $ \| \cdot \|_{L^p(D_k)} = \| \cdot \|_{p}  $. Before proceeding further, it is convenient to recall (see \cite[Theorem 3.1]{BCLS}) that the Sobolev-type inequality \eqref{Sob} can equivalently be rewritten in a ``Gagliardo-Nirenberg'' form as
\begin{equation}\label{NGN}
\begin{gathered}
\left\| f \right\|_{L^r(\mathbb{M}^n)} \le \widetilde{C}_S \left( \left\| \nabla{f} \right\|_{L^2(\mathbb{M}^n)} + \left\| f \right\|_{L^2(\mathbb{M}^n)}  \right)^{\vartheta(s,r,n)} \left\| f \right\|_{L^s(\mathbb{M}^n)} ^{1-\vartheta(s,r,n)} \qquad \forall f \in W^{1,2}(\mathbb{M}^n) \cap L^s(\mathbb{M}^n) \\[0.15cm]
\text{for every} \ 0 < s < r \le 2^\star \, , \qquad \text{where } \vartheta =  \vartheta(s,r,N) := \frac{2n\,(r-s)}{r\,[2n-s(n-2)]} \in (0,1) 
\end{gathered}
\end{equation}
and $ \widetilde{C}_S $ is another positive constant that can be taken independent of $ r,s $. Taking advantage of Young's inequality, it is not difficult to show that \eqref{NGN} implies
\begin{equation}\label{NGN-bis}
\begin{gathered}
\left\| f \right\|_{L^r(\mathbb{M}^n)} \le \widetilde{C}_S \left( \left\| \nabla{f} \right\|_{L^2(\mathbb{M}^n)} + \left\| f \right\|_{L^s(\mathbb{M}^n)}  \right)^{\vartheta(s,r,n)} \left\| f \right\|_{L^s(\mathbb{M}^n)} ^{1-\vartheta(s,r,n)} \qquad \forall f \in W^{1,2}(\mathbb{M}^n) \cap L^s(\mathbb{M}^n)
\\[0.15cm]
\text{for every} \ 0 < s < r \le 2^\star \ \text{with} \ s \le 2 \, ,
\end{gathered}
\end{equation}
for a possibly different positive constant $ \widetilde{C}_S $ as above that we do not relabel. We are now in position to handle the l.h.s.~of \eqref{la3.2} by applying \eqref{NGN-bis} to the function 
$$ f = \rho_{\varepsilon,k}^{{(m+p_j-1)}/{2}}(t) \, , $$
which yields (we can suppose that the solution is not identically zero)
\begin{equation}\label{la3.2-bis}
\begin{aligned}
& \, \frac{2 \, c_0 \, m \, p_j \,(p_j-1)}{\widetilde{C}_S^{\frac{2}{\vartheta}} \, (m+p_j-1)^2} \bigintsss_{t_j}^{t_{j+1}} \frac{\left\| \rho_{\varepsilon,k}(t) \right\|_{{r(m+p_j-1)}/{2}}^{{(m+p_j-1)}/{\vartheta}}}{\left\| \rho_{\varepsilon,k}(t) \right\|^{{(1-\vartheta)(m+p_j-1)}/{\vartheta}}_{{s(m+p_j-1)}/{2}} } \, \d t \\
\leq & \, \left\| \rho_{\varepsilon,k}(t_j) \right\|_{p_j}^{p_j} + \frac{4 \, c_0 \, m \, p_j \,(p_j-1)}{(m+p_j-1)^2} \, \int_{t_j}^{t_{j+1}} \left\| \rho_{\varepsilon,k}(t) \right\|^{m+p_j-1}_{{s(m+p_j-1)}/{2}} \, \d t \, .
\end{aligned}
\end{equation}
Upon making the (feasible) choices
$$ s=\frac{2p_j}{m+p_j-1} \, , \qquad r=2+\frac{2s}{n}=2\,\frac{(n+2)p_j+n(m-1)}{n(m+p_j-1)} \, , $$ 
recalling the recursive definition of $ \{ t_j \}_{j\in\mathbb{N}} $ and using \eqref{eq: non-exp-k}, from \eqref{la3.2-bis} we can infer that
\begin{equation}\label{la3.2-ter}
\frac{c_0 \, m \, p_j \,(p_j-1)\, t}{\widetilde{C}_S^{\frac{2}{\vartheta}} \,2^j (m+p_j-1)^2} \, \frac{\left\| \rho_{\varepsilon,k}(t_{j+1}) \right\|_{p_{j+1}}^{p_{j+1}}}{\left\| \rho_{\varepsilon,k}(t_j) \right\|^{{2p_j}/{n}}_{p_j} } \le \left\| \rho_{\varepsilon,k}(t_j)\right\|_{p_j}^{p_j} + \frac{2 \, c_0 \, m \, p_j \,(p_j-1)\,t}{2^j (m+p_j-1)^2} \, \left\| \rho_{\varepsilon,k}(t_j) \right\|^{m+p_j-1}_{p_j} ,
\end{equation}
where $ p_{j} $ is also defined recursively by
\begin{equation}\label{eq:pj}
p_{j+1}=\frac{n+2}{n} \, p_j+m-1 \qquad  \Longrightarrow
\qquad p_{j}= \left[ p_0 + \frac{n(m-1)}{2} \right] \left( \frac{n+2}{n} \right)^j - \frac{n(m-1)}{2} \quad \forall j \in \mathbb{N} \, .
\end{equation}	
From here on, we will denote by $H$ a generic positive constant that depends only on $ c_0,\widetilde{C}_S,n,p_0 $ and is independent of $ m $ ranging in a bounded subset of $ (1,+\infty) $, which may vary from line to line. Hence estimate \eqref{la3.2-ter} can be rewritten as 
\begin{equation}\label{la3.2-quater}
\left\| \rho_{\varepsilon,k}(t_{j+1}) \right\|_{p_{j+1}}^{p_{j+1}} \le H \left( \frac{2^j }{t} \left\| \rho_{\varepsilon,k}(t_j) \right\|_{p_j}^{\frac{n+2}{n}\,p_j} + \left\| \rho_{\varepsilon,k}(t_j)\right\|_{p_j}^{\frac{n+2}{n}\,p_j + m-1} \right) .
\end{equation} 
By combining \eqref{eq: non-exp-k}, the monotonicity of $ \{ p_j \}_{j \in \mathbb{N}} $, interpolation and Young's inequalities, we easily obtain:
$$ \left\| \rho_{\varepsilon,k}(t_j) \right\|_{p_j} \le \left\| \rho_0 \right\|_\infty + \left\| \rho_0 \right\|_{p_0} , $$
whence from \eqref{la3.2-quater} there follows
\begin{equation}\label{la3.2-young}
\left\| \rho_{\varepsilon,k}(t_{j+1}) \right\|_{p_{j+1}} \le H^{\frac{j+1}{p_{j+1}}} \left[ t^{-1} + \left( \left\| \rho_0 \right\|_\infty + \left\| \rho_0 \right\|_{p_0} \right)^{m-1} \right]^{\frac{1}{p_{j+1}}} \left\| \rho_{\varepsilon,k}(t_j)\right\|_{p_j}^{\frac{n+2}{n} \frac{p_j}{p_{j+1}}} .
\end{equation} 
Iterating \eqref{la3.2-young} and exploiting again \eqref{eq: non-exp-k} (in the l.h.s.~of \eqref{la3.2-young}) yields
\begin{equation*}
\left\| \rho_{\varepsilon,k}(t) \right\|_{p_{j+1}} 
\le  H^{\frac{\sum_{h=1}^{j+1} h \left( \frac{n+2}{n} \right)^{j+1-h} }{p_{j+1}}} \left[ t^{-1} + \left( \left\| \rho_0 \right\|_\infty + \left\| \rho_0 \right\|_{p_0} \right)^{m-1} \right]^{\frac{\sum_{h=0}^{j} \left( \frac{n+2}{n} \right)^h }{p_{j+1}}} \left\| \rho_0 \right\|_{p_0}^{\left(\frac{n+2}{n}\right)^{j+1} \frac{p_0}{p_{j+1}}} ;
\end{equation*} 
by letting $ j \to \infty $, recalling \eqref{eq:pj}, we thus end up with
\begin{equation*}
\left\| \rho_{\varepsilon,k}(t) \right\|_{\infty} \le H \left[ t^{-1} + \left( \left\| \rho_0 \right\|_\infty + \left\| \rho_0 \right\|_{p_0} \right)^{m-1} \right]^{\frac{n}{2p_0+n(m-1)}} \left\| \rho_0 \right\|_{p_0}^{\frac{2p_0}{2p_0+n(m-1)}} ,
\end{equation*}
whence
\begin{equation}\label{la3.2-young-limit-bis}
\left\| \rho_{\varepsilon,k}(t) \right\|_{\infty} \le H \left[ t^{-\frac{n}{2p_0+n(m-1)}} + \left( \left\| \rho_0 \right\|_\infty + \left\| \rho_0 \right\|_{p_0} \right)^{\frac{n(m-1)}{2p_0+n(m-1)}} \right] \left\| \rho_0 \right\|_{p_0}^{\frac{2p_0}{2p_0+n(m-1)}} .
\end{equation}
At this point we need to take advantage of the following version of Young's inequality:
\begin{equation*}\label{young-first}
A^{\theta} \, B^{1-\theta} \le \epsilon \, \theta \, A + \epsilon^{-\frac{\theta}{1-\theta}} \, (1-\theta) \, B  \qquad \forall A,B,\epsilon > 0 \, , \quad \forall \theta \in (0,1) \, .
\end{equation*} 
Upon choosing
$$
A= \left\| \rho_0 \right\|_\infty + \left\| \rho_0 \right\|_{p_0} , \quad B= \left\| \rho_0 \right\|_{p_0} , \quad \theta = \frac{n(m-1)}{2p_0+n(m-1)} \, , \quad \epsilon = \left( H \, \theta \, 2^{1+\frac{\theta}{m-1}} \right)^{-1} ,
$$
from \eqref{la3.2-young-limit-bis} we infer that
\begin{equation}\label{est-iter-1}
\left\| \rho_{\varepsilon,k}(t) \right\|_{\infty} \le \frac{\left\| \rho_0 \right\|_\infty}{2^{1+\frac{\theta}{m-1}}} + H \, t^{-\frac{\theta}{m-1}} \left\| \rho_0 \right\|_{p_0}^{1-\theta} + \left[ 2^{-1-\frac{\theta}{m-1}} + H \left(H \, \theta \, 2^{1+\frac{\theta}{m-1}} \right)^{\frac{\theta}{1-\theta}} \right] \left\| \rho_0 \right\|_{p_0} ;
\end{equation}
since $ \theta $ stays bounded away from $ 1 $ and $ {\theta}/{(m-1)} $ stays bounded as $ m $ ranges in a bounded subset of $ (1,+\infty) $, we can equivalently rewrite \eqref{est-iter-1} as  
\begin{equation}\label{est-iter-2}
\left\| \rho_{\varepsilon,k}(t) \right\|_{\infty} \le \frac{\left\| \rho_0 \right\|_\infty}{2^{1+\frac{\theta}{m-1}}} + H \, t^{-\frac{\theta}{m-1}} \left\| \rho_0 \right\|_{p_0}^{1-\theta} + H \left\| \rho_0 \right\|_{p_0} \qquad \forall t>0 \, .
\end{equation}
In order to remove the dependence of the r.h.s.~of \eqref{est-iter-2} on $ \left\| \rho_0 \right\|_\infty $, we can use a time-shift argument, namely for each $ j \in \mathbb{N} $ we consider \eqref{est-iter-2} evaluated at $ t \equiv t/2^j $ with time origin shifted from $ 0 $ to $ t/2^{j+1} $ (we implicitly rely on the uniqueness of energy solutions). This, along with \eqref{eq: non-exp-k}, ensures that
\begin{equation}\label{est-iter-3}
\left\| \rho_{\varepsilon,k} \big(t/2^j\big) \right\|_{\infty} \le \frac{\left\| \rho_{\varepsilon,k} \big (t/2^{j+1} \big) \right\|_{\infty}}{2^{1+\frac{\theta}{m-1}}} + 2^{\frac{\theta(j+1)}{m-1}} H \, t^{-\frac{\theta}{m-1}} \left\| \rho_0 \right\|_{p_0}^{1-\theta} + H \left\| \rho_0 \right\|_{p_0} \qquad \forall j \in \mathbb{N} \, .
\end{equation}
By iterating \eqref{est-iter-3} from $ j=0 $ to $ j=J \in \mathbb{N} $, we obtain:
\begin{equation*}
\left\| \rho_{\varepsilon,k}(t) \right\|_{\infty} \le \frac{\left\| \rho_0 \right\|_{\infty}}{2^{\left( 1+\frac{\theta}{m-1} \right)(J+1)}} + 2^{\frac{\theta}{m-1}} H \, t^{-\frac{\theta}{m-1}}  \left\| \rho_0 \right\|_{p_0}^{1-\theta} \, \sum_{j=0}^{J} 2^{-j} + H \left\| \rho_0 \right\|_{p_0} \, \sum_{j=0}^J 2^{-\left( 1+\frac{\theta}{m-1} \right)j} \, ,
\end{equation*}  
so that taking limits as $ J \to \infty $ yields 
\begin{equation}\label{est-iter-limit}
\left\| \rho_{\varepsilon,k}(t) \right\|_{\infty} \le H \left( t^{-\frac{\theta}{m-1}} \left\| \rho_0 \right\|_{p_0}^{1-\theta} + \left\| \rho_0 \right\|_{p_0} \right) \qquad \forall t>0 \, .
\end{equation} 
We finally need to extend the just proved estimate to the case $ p_0=1 $, the one we are primarily interested in. Given any $ p_0 \ge 2 $ as above (fixed), let us plug the interpolation inequality 
$$ \left\| \rho_0 \right\|_{p_0} \le \left\| \rho_0 \right\|_{\infty^{\phantom{a}}}^{1-\frac{1}{p_0}} \left\| \rho_0 \right\|_1^{\frac{1}{p_0}} $$
into \eqref{est-iter-limit}:
\begin{equation}\label{case-1-smooth-m2-ext-proof-1}
\begin{gathered}
\left\| \rho_{\varepsilon,k}(t) \right\|_{\infty} \leq C \left\| \rho_0 \right\|_{\infty^{\phantom{a}}}^{\frac{2(p_0-1)}{2p_0+n(m-1)}} \left( t^{-\frac{n}{2p_0+n(m-1)}} \left\| \rho_0 \right\|_{1}^{\frac{2}{2p_0+n(m-1)}} + \left\| \rho_0 \right\|_{\infty^{\phantom{a}}}^{\frac{n(m-1)(p_0-1)}{p_0[2p_0+n(m-1)]}} \left\| \rho_0 \right\|_{1}^{\frac{1}{p_0}} \right) \quad \forall t>0 \, , \\
\end{gathered}
\end{equation}
where $ C $ stands for a generic positive constant as in the statement. By exploiting again a time-shift argument, it is readily seen that \eqref{case-1-smooth-m2-ext-proof-1} entails, for all $ j \in \mathbb{N} $,
\begin{equation}\label{case-1-smooth-m2-ext-proof-2}
\begin{aligned}
 \left\| \rho_{\varepsilon,k}\big(t/2^j\big) \right\|_{\infty} \leq & \, 2^{\frac{n(j+1)}{2p_0+n(m-1)}} \, C \left\| \rho_{\varepsilon,k}\big(t/2^{j+1}\big) \right\|_{\infty}^{\frac{2(p_0-1)}{2p_0+n(m-1)}}  \\
& \times \left( t^{-\frac{n}{2p_0+n(m-1)}} \left\| \rho_0 \right\|_{1}^{\frac{2}{2p_0+n(m-1)}} + \left\| \rho_0 \right\|_{\infty^{\phantom{a}}}^{\frac{n(m-1)(p_0-1)}{p_0[2p_0+n(m-1)]}} \left\| \rho_0 \right\|_{1}^{\frac{1}{p_0}} \right) .
\end{aligned}
\end{equation}
Since
\[
 \frac{2(p_0-1)}{2p_0+n(m-1)} \le 1-\frac{1}{p_0} \, ,
\]
a straightforward iteration of \eqref{case-1-smooth-m2-ext-proof-2} ensures that
\begin{equation}\label{case-1-smooth-m2-ext-proof-4}
\begin{aligned}
\left\| \rho_{\varepsilon,k}(t) \right\|_{\infty} \le & \, C \left( t^{-\frac{n}{2p_0+n(m-1)}} \left\| \rho_0 \right\|_{1}^{\frac{2}{2p_0+n(m-1)}} + \left\| \rho_0 \right\|_{\infty^{\phantom{a}}}^{\frac{n(m-1)(p_0-1)}{p_0[2p_0+n(m-1)]}} \left\| \rho_0 \right\|_{1}^{\frac{1}{p_0}} \right)^{\frac{2p_0+n(m-1)}{2+n(m-1)}} , \\
\le  & \, C \left( t^{-\frac{n}{2+n(m-1)}} \left\| \rho_0 \right\|_{1}^{\frac{2}{2+n(m-1)}} + \left\| \rho_0 \right\|_{\infty^{\phantom{a}}}^{\frac{n(m-1)(p_0-1)}{p_0[2+n(m-1)]}} \left\| \rho_0 \right\|_{1}^{\frac{2p_0+n(m-1)}{p_0[2+n(m-1)]}} \right) . 
\end{aligned}
\end{equation}
By applying a Young-type inequality similar to the one that led us to \eqref{est-iter-1}, from \eqref{case-1-smooth-m2-ext-proof-4} we easily deduce that
\begin{equation}\label{case-1-smooth-m2-ext-proof-5}
\begin{aligned}
\left\| \rho_{\varepsilon,k}(t) \right\|_{\infty} \le & \, \frac{\left\| \rho_0 \right\|_\infty}{2^{1+\frac{n}{2+n(m-1)}}} + C \, t^{-\frac{n}{2+n(m-1)}} \left\| \rho_0 \right\|_{1}^{\frac{2}{2+n(m-1)}}  \\
& \, + C \left(C \, \frac{n(m-1)(p_0-1)}{p_0[2+n(m-1)]} \, 2^{1+\frac{n}{2+n(m-1)}} \right)^{\frac{n(m-1)(p_0-1)}{2p_0+n(m-1)}} \left\| \rho_0 \right\|_1  \\
\le & \, \frac{\left\| \rho_0 \right\|_\infty}{2^{1+\frac{n}{2+n(m-1)}}} + C \, t^{-\frac{n}{2+n(m-1)}} \left\| \rho_0 \right\|_{1}^{\frac{2}{2+n(m-1)}} + C \left\| \rho_0 \right\|_1 \qquad \forall t>0  \, .
\end{aligned}
\end{equation}
Estimate \eqref{case-1-smooth-m2-ext-proof-5} is completely analogous to \eqref{est-iter-2}, so that by reasoning in the same fashion we end up with 
\begin{equation}\label{est-iter-limit-L1}
\left\| \rho_{\varepsilon,k}(t) \right\|_{L^\infty(D_k)} \le C \left( t^{-\frac{n}{2+n(m-1)}} \left\| \rho_0 \right\|_{L^1(D_k)}^{\frac{2}{2+n(m-1)}} + \left\| \rho_0 \right\|_{L^1(D_k)} \right) \qquad \forall t>0 \, .
\end{equation}
Recalling the convergence results encompassed by Proposition \ref{exunimain}, the smoothing effect \eqref{smoothing-one} follows by letting $ k \to \infty $ in \eqref{est-iter-limit-L1}, whereas \eqref{smoothing-limit} follows by letting $ \varepsilon\downarrow 0 $ in \eqref{smoothing-one}.
\end{proof}

The next proposition establishes that solutions starting from bounded and compactly-supported data stay with compact support, at least {for short times}. It is a consequence of the power degeneracy of $ P $ induced by assumption \eqref{below-above-prime} (here we need both sides), hence it is a purely \emph{nonlinear} effect. We stress that this property will be crucial in order to show two essential facts: solutions starting from data in $\mathscr{M}_2^M(\mathbb{M}^n)$ belong to $\mathscr{M}_2^M(\mathbb{M}^n)$ for all times and they form a \emph{continuous} curve with values in $ (\mathscr{M}_2^M(\mathbb{M}^n),\mathcal{W}_2) $. 

\begin{proposition}[Compactness of the support]\label{compact-support}
Let $ P$ comply with \eqref{increas} and \eqref{below-above-prime}. Let $ \rho_0 \in L^1(\mathbb{M}^n) \cap L^\infty(\mathbb{M}^n) $ be nonnegative with \emph{compact support}. Then there exist $ t_1>0 $ and a compact set $ B \subset \mathbb{M}^n $, depending on $ \rho_0 , m , c_0 , c_1 $ and $ \mathbb{M}^n $, such that the weak energy solution $ \rho $ to \eqref{pme-reg} satisfies
\begin{equation}\label{eq:comp-supp}
\mathrm{supp} \, \rho(t) \subset  B \qquad \forall t \in \left[0,t_1\right] . 
\end{equation} 
\end{proposition}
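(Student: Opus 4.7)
The plan is to dominate $\rho$ by an explicit compactly supported supersolution of porous-medium type and then invoke a comparison principle. Fix a pole $o \in \M^n$ and $R_0 > 0$ with $\mathrm{supp}\,\rho_0 \subseteq B_{R_0}(o)$; set $M_0 := \|\rho_0\|_{L^\infty(\M^n)}$ and choose some $\delta > 0$. I would consider the radial barrier
\begin{equation*}
\overline{\rho}(x,t) := A\left[\left(R_0 + \delta + Lt - \mathsf{d}(x,o)\right)_+\right]^{1/(m-1)},
\end{equation*}
with $A \geq M_0\,\delta^{-1/(m-1)}$, which forces $\overline{\rho}(\cdot,0) \geq \rho_0$ pointwise, and with $L > 0$ to be tuned so that $\overline{\rho}$ is a distributional supersolution of $\partial_t \rho = \Delta P(\rho)$ on $\M^n \times (0,t_1)$ for some $t_1 > 0$.

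To check the supersolution inequality I would compute $\partial_t \overline{\rho}$ and $\Delta P(\overline{\rho})$ in the open set $\{\overline{\rho} > 0\}$, setting $\phi := R_0 + \delta + Lt - \mathsf{d}(x,o)$. From $P'(\rho) \leq c_1 m \rho^{m-1}$ one gets $P(\overline{\rho}) \leq c_1 A^m \phi^{m/(m-1)}$, and the Laplacian comparison $\Delta \mathsf{d}(\cdot,o) \leq C_1$ on the compact annular region of interest (with $C_1$ controlled by any local lower Ricci bound, available by smoothness of $\M^n$) reduces the inequality, after division by $\phi^{(2-m)/(m-1)}$, to an algebraic condition of the form
\begin{equation*}
AL \geq c_1 A^m\, m \left[\tfrac{1}{m-1} + C_1 \left(R_0 + \delta + Lt_1\right)\right],
\end{equation*}
which is satisfied by choosing $L$ sufficiently large in terms of $A, m, c_1, C_1, R_0, \delta, t_1$. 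Since $m/(m-1) > 1$, the pressure $P(\overline{\rho}) \sim \phi^{m/(m-1)}$ is of class $C^1$ across the free boundary $\{\phi = 0\}$, so no spurious distributional contribution arises there.

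Next I would invoke a comparison principle to deduce $\rho \leq \overline{\rho}$ on $\M^n \times [0,t_1]$, which immediately yields $\mathrm{supp}\,\rho(t) \subseteq \overline{B_{R_0+\delta+Lt_1}(o)} =: B$, a compact set by Hopf--Rinow. The rigorous implementation goes through the nondegenerate approximations $\rho_\varepsilon$ of Proposition~\ref{exunimain}: for each $\varepsilon > 0$ the equation is uniformly parabolic, so a classical weak maximum principle on a large bounded domain $D \supset B$ applied to $\rho_\varepsilon - (\overline{\rho} + \varepsilon^\alpha)$ (with $\alpha > 0$ chosen so that $\overline{\rho} + \varepsilon^\alpha$ remains a supersolution of the regularized equation) yields $\rho_\varepsilon \leq \overline{\rho} + \varepsilon^\alpha$. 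Since $\overline{\rho}$ vanishes outside $B$, passing to the limit $\varepsilon \downarrow 0$ via \eqref{conv-epsilon} gives $\rho(\cdot,t) = 0$ almost everywhere outside $B$ for every $t \in [0,t_1]$, and hence \eqref{eq:comp-supp}.

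The main obstacle I anticipate is the non-smoothness of $\mathsf{d}(\cdot,o)$ on the cut locus, where the pointwise Laplacian comparison fails. This is handled by the standard Calabi trick: at any point $x_0$ of the cut locus one constructs a smooth local upper barrier $\widetilde{\mathsf{d}}$ for $\mathsf{d}(\cdot,o)$ satisfying the same Laplacian bound pointwise, which translates into a smooth local lower barrier for $\overline{\rho}$ and yields the required supersolution inequality at $x_0$ in the distributional sense. A subsidiary technical point, the passage from the pointwise supersolution inequality for $\overline{\rho}$ to a comparison statement for weak solutions of the nondegenerate equation, is handled via duality in Ole\u{\i}nik's spirit, akin to the uniqueness argument in Proposition~\ref{exunimain}.
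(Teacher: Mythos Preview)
Your overall strategy---a traveling-wave barrier and comparison via the nondegenerate approximations---is exactly the paper's. The gap lies in the specific barrier. With $\overline{\rho}=A\phi_+^{1/(m-1)}$, $\phi=R_0+\delta+Lt-\mathsf{d}(x,o)$, the supersolution inequality reads
\[
\partial_t\overline{\rho}-\Delta P(\overline{\rho})
=\frac{AL}{m-1}\,\phi^{\frac{2-m}{m-1}}-W''(\phi)+W'(\phi)\,\Delta\mathsf{d}(x,o)\ge 0,
\]
where $W(\phi):=P(A\phi^{1/(m-1)})$ so that $W'>0$. Hence what you actually need is a \emph{lower} bound on $\Delta\mathsf{d}(\cdot,o)$, not the upper bound you invoke; your displayed algebraic condition is in fact equivalent to $\Delta\mathsf{d}\ge -C_1$. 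On the cut locus of $o$ the distributional Laplacian of $\mathsf{d}(\cdot,o)$ carries a nonpositive singular part, so $-W'(\phi)\Delta\mathsf{d}$ acquires a \emph{positive} singular measure and $\overline{\rho}$ fails to be a distributional supersolution there. The Calabi trick does not rescue this: it produces a smooth $\tilde{\mathsf d}\ge\mathsf d$ with an \emph{upper} bound on $\Delta\tilde{\mathsf d}$, hence a smooth $\tilde\rho\le\overline{\rho}$ touching from below, but the pointwise inequality $\partial_t\tilde\rho-\Delta P(\tilde\rho)\ge 0$ still hinges on a lower bound for $\Delta\tilde{\mathsf d}$, which Calabi does not supply (and which genuinely fails when $x_0$ is a conjugate point). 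There is also a secondary issue: since $P$ is only assumed $C^1$, $W''$ involves $P''$ and is not available in general.

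The paper's remedy is to replace the distance from a point by the distance to a smooth compact hypersurface. One takes a regular exhaustion $\{D_k\}$, fixes $k$ with $\mathrm{supp}\,\rho_0\subset D_{k-1}$, and works in a thin tubular neighborhood of $\partial D_k$ with normal coordinate $\delta=\mathsf{d}(\cdot,\partial D_k)$, which is smooth there; then $\Delta$ of a function of $\delta$ equals $\phi''(\delta)+\mathsf m(\pi,\delta)\phi'(\delta)$ with $\mathsf m$ smooth and hence bounded both above and below on the compact strip. The barrier is taken in the form $\overline u=P^{-1}\big([C_1(C_2t+\delta-\tfrac{\epsilon}{2})_+]^{m/(m-1)}\big)$, so that $P(\overline u)$ is explicit and $C^2$ in $\delta$ regardless of the regularity of $P$; this yields a clean pointwise supersolution inequality on the strip, with $\overline u\ge\|\rho_0\|_\infty$ on the inner boundary and support staying inside $D_k$ for $t\in[0,t_1]$. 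The comparison is then carried out against the strong solutions on the $D_j$'s, exactly along the approximation lines you sketch.
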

\begin{proof}
Since $ \mathbb{M}^n $ is a smooth, complete, connected and noncompact Riemannian manifold, it is well known that it admits a regular exhaustion, namely a sequence of open sets $ D_k \subset \mathbb{M}^n $ such that $ \overline{D}_k $ is a smooth, compact manifold with boundary (for all $ k \in \mathbb{N} $) and there hold
$$
\overline{D}_{k} \Subset D_{k+1} \qquad \text{and} \qquad \bigcup_{k=1}^{\infty} D_k = \mathbb{M}^n \, .
$$
In particular, $ \partial D_k $ is a smooth $(n-1)$-dimensional, compact, orientable submanifold of $ \mathbb{M}^n $, with a natural orientation given by the outward-pointing normal field w.r.t.~$D_k$. For such a construction we refer e.g.~to \cite[Proposition 2.28, Theorem 6.10, Propositions 15.24 and 15.33]{Lee}. Given $ \epsilon>0 $, let us define the set of all points inside $ D_k $ whose distance from $ \partial D_k $ is smaller than $ \epsilon $, that is
$$
D_k^\epsilon := \left\{ x \in D_k: \ \mathsf{d}(x,\partial D_k) < \epsilon \right\} .
$$
Since $ \partial D_k $ enjoys the above recalled regularity properties, if $ \epsilon $ is sufficiently small then each $ x \in D_k^\epsilon $ admits a unique projection $ \pi(x) $ onto $ \partial D_k $. Hence every such point is uniquely identified by the pair $ \Pi(x) := (\pi(x),\delta(x)) $, where $ \delta(x) $ is the geodesic distance from $ x $ to $ \pi(x) $ (or equivalently to $ \partial D_k $). Moreover, the map $ \Pi $ is a diffeomorphism between $ D_k^\epsilon $ and $  \partial D_k \times (0,\epsilon) $, so that one can use $ \delta = \delta(x) $ and $ \pi = \pi(x) $ as coordinates that span the whole $ D_k^\epsilon $ (see e.g.~\cite{Foote}). It is not difficult to check that $ \delta $ being a geodesic coordinate, the Laplacian of a regular function $ \phi $ (defined on $ D_k^\epsilon $) that depends only on $ \delta $ reads 
\begin{equation}\label{Lap-Bel}
\Delta \phi (\pi,\delta) = \phi^{\prime\prime}(\delta) + \mathsf{m}(\pi,\delta) \, \phi^\prime(\delta) \qquad \forall (\pi,\delta) \in \partial D_k \times (0,\epsilon) \, ,
\end{equation}
where $ \mathsf{m}(\pi,\delta) $ is also regular (in fact it is the Laplacian of the distance function itself). 

Taking advantage of such framework, first of all we pick $k$ so large that $\mathrm{supp} \, \rho_0 \subset D_{k-1} $ and $ \epsilon>0 $ so small that, alongside with the unique-projection property, there holds $ D_{k-1} \cap D_k^\epsilon =  \emptyset $. Then we define 
$$
\Sigma_\epsilon := \Pi^{-1}\!\left( \partial D_k \times \{ \epsilon \} \right) ,
$$
namely the set of points inside $ D_k $ whose distance to $ \partial D_k $ is equal to $ \epsilon $, which describes a smooth submanifold having analogous properties to $ \partial D_k $ (note that, since one has the right to choose $\epsilon$ arbitrarily small, $ \Pi $ can smoothly be extended up to $ \partial D_k \times \{ \epsilon \}  $). We also define $ \Omega_\epsilon $ to be the regular domain enclosed by $ \Sigma_\epsilon $. Now let us consider the Cauchy-Dirichlet problem
\begin{equation}\label{eq-sup-pb}
\begin{cases}
\partial_t u = \Delta P(u)  & \text{in } \mathbb{M}^n \setminus \Omega_\epsilon \times (0,t_1) \, , \\ 
u = \| \rho_0 \|_\infty & \text{on } \Sigma_\epsilon \times (0,t_1) \, , \\
u = 0 & \text{on } \mathbb{M}^n \setminus \Omega_\epsilon \times \{ 0 \} \, ,
\end{cases}
\end{equation}
where $ t_1>0 $ is a small enough time to be chosen later. Since $ \rho \le \| \rho_0 \|_\infty $ in $ \mathbb{M}^n \times \mathbb{R}^+ $ and $ \mathrm{supp} \, \rho_0 \subset \Omega_\epsilon $, it is apparent that $ \rho $ is a \emph{subsolution} of \eqref{eq-sup-pb}. Our aim is to construct a \emph{supersolution} which depends spatially only on $\delta$ and has compact support for all $ t \in [0,t_1] $. The candidate profile is modeled after Euclidean planar \emph{traveling waves} for the porous medium equation, see \cite[Section 4.3]{V07}. That is, we consider the following function:
\begin{equation}\label{cs1}
\overline{u}(\delta,t) := P^{-1} \! \left( \left[ C_1 \left( C_2 t + \delta - \frac{\epsilon}{2} \right)_+ \right]^{\frac{m}{m-1}} \right) \qquad \forall (\delta,t) \in (0,\epsilon] \times [0,t_1] \, ,
\end{equation}
where $ C_1 $ and $ C_2 $ are positive constants to be selected. In view of the assumptions on $ P $, it is not difficult to deduce the following inequalities:
\begin{equation}\label{p-inv}
\left( \frac{v}{c_1} \right)^{\frac{1}{m}} \le P^{-1}(v) \le \left( \frac{v}{c_0} \right)^{\frac{1}{m}}  \qquad \forall v \ge 0 \, ,
\end{equation}
\begin{equation}\label{p-inv-prime}
\left[ P^{-1} \right]^\prime\!(v) \ge \frac{c_0^{1-\frac{1}{m}}}{c_1 \, m}  \, v^{\frac{1}{m}-1} \qquad \forall v > 0 \, .
\end{equation}
Clearly $ \overline{u}(\delta,0) \ge 0 $ and, thanks to \eqref{p-inv}, 
$$
\overline{u}(\epsilon,t) \ge c_1^{-\frac{1}{m}} \left[ C_1 \, \frac{\epsilon}{2} \right]^{\frac{1}{m-1}} \qquad \forall t \ge 0 \, ;
$$
hence a first requirement to make sure that $ \overline{u} $ complies with the boundary condition in \eqref{eq-sup-pb} is
\begin{equation}\label{r1}
C_1 \ge \frac{2}{\epsilon} \, c_1^{\frac{m-1}{m}} \, \| \rho_0 \|_\infty^{m-1} \, .
\end{equation}
Let us now compute the derivatives of $ \overline{u} $ and $ P(\overline{u})$ needed to construct a supersolution:
\begin{equation}\label{deriv-1}
\begin{aligned}
\partial_t \overline{u}(\delta,t)  = &  \, C_2 \, C_1^{\frac{m}{m-1}} \frac{m}{m-1} \left( C_2 t + \delta - \frac{\epsilon}{2} \right)_+^{\frac{1}{m-1}} \left[ P^{-1} \right]^\prime\!\left( \left[ C_1 \left( C_2 t + \delta - \frac{\epsilon}{2} \right)_+ \right]^{\frac{m}{m-1}}  \right) \\ 
\stackrel{\eqref{p-inv-prime}}{\ge} & \, C_2 \, C_1^{\frac{1}{m-1}} \frac{c_0^{\frac{m-1}{m}}}{(m-1)c_1} \left( C_2 t + \delta - \frac{\epsilon}{2} \right)_+^{\frac{2-m}{m-1}} \, ,
\end{aligned}
\end{equation}
\begin{equation}\label{deriv-2}
\partial_\delta \! \left( P(\overline{u}) \right) \! (\delta,t) =  C_1^{\frac{m}{m-1}} \, \frac{m}{m-1} \left( C_2 t + \delta - \frac{\epsilon}{2} \right)_+^{\frac{1}{m-1}} ,
\end{equation}
\begin{equation}\label{deriv-3}
\partial_{\delta \delta}\! \left( P(\overline{u}) \right) \! (\delta,t) =  C_1^{\frac{m}{m-1}} \, \frac{m}{(m-1)^2} \left( C_2 t + \delta - \frac{\epsilon}{2} \right)_+^{\frac{2-m}{m-1}} .
\end{equation}
We pick $t_1$ in such a way that the distance of the support of $ \overline{u} $ from $ \partial D_k $ is not smaller than $ \epsilon/4 $ for all $ t \in [0,t_1] $, namely
\begin{equation}\label{r2}
t_1 = \frac{\epsilon}{4C_2} \, .
\end{equation}
Let $ \sigma $ denote the maximum of $ \mathsf{m}(\pi,\delta) $ in the region $ E_\epsilon := \partial D_k \times [\epsilon/4,\epsilon] $. Because $ \overline{u} $ is nondecreasing in $ \delta $ and \eqref{r2} ensures that the support of $ \overline{u} $ lies in $  E_\epsilon $, in order to guarantee that the latter is a (weak) supersolution of the differential equation in \eqref{eq-sup-pb} it suffices to ask that (recalling \eqref{Lap-Bel}) 
\begin{equation}\label{cs2}
\partial_t \overline{u}(\delta,t) \ge \partial_{\delta \delta} P(\overline{u}) (\delta,t) + \sigma \, \partial_\delta P(\overline{u}) (\delta,t) \qquad \forall (\delta,t) \in [\epsilon/4,\epsilon] \times [0,t_1] \, .
\end{equation}
Thanks to \eqref{deriv-1}--\eqref{deriv-3}, after some simplifications we find that \eqref{cs2} holds if
\begin{equation*}\label{cs3}
C_2 \, \frac{c_0^{\frac{m-1}{m}}}{(m-1)c_1} \ge C_1 \, \frac{m}{(m-1)^2} \left[ 1 + (m-1) \sigma \left( C_2 t + \delta - \frac{\epsilon}{2} \right)_+  \right] \qquad \forall (\delta,t) \in [\epsilon/4,\epsilon] \times [0,t_1] \, ,
\end{equation*}
the latter inequality being in turn implied by 
\begin{equation}\label{cs4}
C_2 \ge C_1 \, \frac{c_1 \, m}{(m-1)c_0^{\frac{m-1}{m}}} \left[ 1 + \frac{3(m-1)\sigma \epsilon}{4} \right] .
\end{equation}
Hence by choosing $C_1$ as in \eqref{r1}, $C_2$ as in \eqref{cs4} and finally $t_1$ as in \eqref{r2}, we infer that \eqref{cs1} is indeed a supersolution of \eqref{eq-sup-pb} (obviously extended in $ \mathbb{M}^n \setminus D_k $). By comparison we can therefore assert that $ \rho \le \overline{u} $ in $ \mathbb{M}^n \setminus \Omega_\epsilon \times [0,t_1] $, which yields \eqref{eq:comp-supp} with $ B=\overline{D}_{k} $. 

As concerns the comparison principle we have just applied, let us point out that in order to justify it rigorously one should know a priori that $ \rho $ is also a strong solution, namely that it has an $ L^1(\mathbb{M}^n) $ time derivative: see \cite[Section 8.2]{V07}, we refer in particular to the analogue of \cite[Lemma 8.11]{V07} in our framework. On the other hand $ \overline{u} $ is a strong supersolution by construction. To circumvent this issue, it is enough (for instance) to exploit the fact that $ \rho $ can always be seen as the limit of solutions $ \rho_j $ to homogeneous Dirichlet problems set up on each $ D_{j} $ (recall the proof of Proposition \ref{exunimain}). Since every $ \rho_j $ is a strong solution in $ D_j $ (see e.g.~\cite[Corollary 8.3]{V07} in the Euclidean setting) and $ \overline{u} $ clearly satisfies homogeneous Dirichlet boundary conditions on $ \partial D_j $ for $ j $ large enough, one obtains $ \rho_j \le \overline{u} $ in $ D_j \setminus \Omega_\epsilon \times [0,t_1] $ for every $ j \in \mathbb{N} $ by proceeding as above, and then lets $ j \to \infty $. 
\end{proof}

\subsection{Variational solutions, {linearized and adjoint equation}}\label{sec:Variational solutions}

For the purposes of proving Theorem \ref{main-result}, we first introduce a suitable (variational) notion of solution of the approximate problem \eqref{pme-approx} and we show its equivalence with the notion of weak energy solution discussed in the previous subsection.
Hereafter we identify $\H$ with its dual $\H'$ and consider the following Hilbert triple:
$$
\V \hookrightarrow \H \equiv \H' \hookrightarrow \V'.  
$$
Problem \eqref{pme-approx} reads
\begin{equation}\label{pme-evol-approx}
\frac{\d}{\d t}\rho = \Delta(P_\varepsilon(\rho)) \, , \qquad \rho(0) = \rho_0 \, ,
\end{equation}
where $ \rho $ is seen as a curve with values in $ \H $ and, accordingly, $\Delta$ is the realization of the (self-adjoint) Laplace-Beltrami operator in $\H$. In agreement with the notations of Subsection \ref{sect:not}, for every $\varepsilon > 0$ and $ T>0 $ we recall the definition of the set $\mathcal{ND}(0,T)$ associated with $P_\varepsilon$: 
\begin{equation*}
\mathcal{ND}_{P_\varepsilon}(0,T):= \left\lbrace u \in W^{1,2}((0,T);\H) \cap C^1([0,T];\V'): \ u \ge 0 \, , \ P_\varepsilon(u) \in L^2((0,T);\D)  \right\rbrace.
\end{equation*}
Note that the nonlinearity $P_{\varepsilon}$ falls within the class of functions considered in \cite[Subsection 3.3]{AMS}, in the more general framework of Dirichlet forms.

\begin{definition}[Strong variational solutions]\label{def.strong-var-sol}
Let $P$ comply with \eqref{increas} and $P_\varepsilon$ ($ \varepsilon>0 $) be defined by \eqref{P-app}. Let $ \rho_0 \in \H $, with $ \rho_0 \ge 0 $, and $T>0$. We say that a curve $\rho \in W^{1,2}((0,T); \V,\V')$, with $ \rho \ge 0 $, is a \emph{strong variational solution} of \eqref{pme-evol-approx} in the time interval $ (0,T) $ if there holds
\begin{equation}\label{eee}
-\leftidx{_{\mathbb{V}^\prime}}{\!\left\langle \tfrac{\d}{\d t} \rho(t) ,  \eta \right\rangle}_{\mathbb{V}} = \int_{\mathbb{M}^n} \left\langle \nabla P_\varepsilon(\rho(t)) \, , \nabla\eta \right\rangle \d\mathcal{V}  \qquad \text{for a.e. } t \in (0,T) \, , \quad \forall \eta \in \mathbb{V} \, ,
\end{equation}
and $ \lim_{t \downarrow 0} \rho(t)=\rho_0 $ in $\mathbb{H}$.
\end{definition} 
We point out that Definition \ref{def.strong-var-sol} does make sense since $ \rho \in W^{1,2}((0,T); \V,\V') $ implies $ \rho \in C([0,T];\H) $, see \cite[formula (3.28)]{AMS} (this is indeed a rather general fact).

\smallskip 

The following well-posedness result is established by \cite[Theorem 3.4]{AMS}.

\begin{proposition}[Existence of strong variational solutions]\label{prop: sol AMS}
Let $P$ comply with \eqref{increas} and $P_\varepsilon $ ($ \varepsilon>0 $) be defined by \eqref{P-app}. Let $ \rho_0 \in \H $, with $ \rho_0 \ge 0 $, and $T>0$. Then there exists a \emph{unique} strong variational solution of \eqref{pme-evol-approx}, in the sense of Definition \ref{def.strong-var-sol}. If in addition $ \rho_0 \in \mathbb{V}$ then $\rho\in \mathcal{ND}_{P_\varepsilon}(0,T)$.
\end{proposition}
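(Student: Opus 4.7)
By construction \eqref{P-app}, $P_\varepsilon$ is a $C^1$ strictly increasing function with $P_\varepsilon(0)=0$ satisfying uniform bi-Lipschitz bounds $\lambda_\varepsilon \le (P_\varepsilon)'(\rho) \le \lambda_\varepsilon^{-1}$ for a suitable $\lambda_\varepsilon \in (0,1)$ depending only on $\varepsilon$ and on the growth of $P$. This places the problem precisely within the abstract framework of \cite[Section 3.3]{AMS}. Moreover, the Dirichlet form \eqref{standard-form} with proper domain $\V$ is regular and strongly local on $\M^n$, and its infinitesimal generator is the unique self-adjoint extension of $(-\Delta)$, as recalled in Subsection \ref{B-E}. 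The plan is therefore to invoke \cite[Theorem 3.4]{AMS} directly: this yields both existence and uniqueness of a strong variational solution $\rho \in W^{1,2}((0,T);\V,\V')$ with $\lim_{t \downarrow 0}\rho(t) = \rho_0$ in $\H$, together with the enhancement $\rho \in \mathcal{ND}_{P_\varepsilon}(0,T)$ whenever $\rho_0 \in \V$.

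For the sake of self-containedness I would also sketch the underlying ingredients. \emph{Existence} rests on a Faedo-Galerkin scheme applied on an exhausting sequence $D_k \subset \M^n$ of smooth relatively compact domains with homogeneous Dirichlet boundary conditions, in the same spirit as Proposition \ref{exunimain}. The crucial a priori estimate is the energy identity obtained by testing with $P_\varepsilon(\rho)$,
\[
\frac{\d}{\d t}\! \int_{\M^n} \Psi_\varepsilon(\rho)\, \d\mathcal{V} + \int_{\M^n} |\nabla P_\varepsilon(\rho)|^2\, \d\mathcal{V} = 0, \qquad \Psi_\varepsilon(r):=\int_0^r P_\varepsilon(s)\,\d s;
\]
combined with the lower bound $\varepsilon \le (P_\varepsilon)'$, it upgrades to $\nabla \rho \in L^2((0,T);\H)$, while the equation itself forces $\d\rho/\d t = \Delta P_\varepsilon(\rho) \in L^2((0,T);\V')$. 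Monotonicity of $P_\varepsilon$ allows passing to the limit in the nonlinear term via Minty's trick, and nonnegativity is preserved by the standard test against $\rho^-$.

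\emph{Uniqueness} can be recovered independently by reproducing the Oleinik trick already employed in Proposition \ref{exunimain}: given two solutions $\rho_1,\rho_2$ with the same initial datum, test the difference equation against $\eta(t):=\int_t^T[P_\varepsilon(\rho_1(s))-P_\varepsilon(\rho_2(s))]\,\d s$, a function that belongs to $W^{1,2}((0,T);\V)$ thanks precisely to the regularity granted by Definition \ref{def.strong-var-sol}; strict monotonicity of $P_\varepsilon$ then forces $\rho_1 \equiv \rho_2$.

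Finally, the enhanced regularity $\rho \in \mathcal{ND}_{P_\varepsilon}(0,T)$ when $\rho_0 \in \V$ encodes the standard ``hitting-the-subdifferential'' smoothing effect for gradient flows of convex and lower semicontinuous functionals in Hilbert spaces applied to $\Phi_\varepsilon(\rho):=\int_{\M^n}\Psi_\varepsilon(\rho)\,\d\mathcal{V}$ in $\V'$: starting from the energy domain one gains $\d\rho/\d t \in L^2((0,T);\H)$, which reinserted into the equation produces $\Delta P_\varepsilon(\rho) \in L^2((0,T);\H)$, i.e.\ $P_\varepsilon(\rho) \in L^2((0,T);\D)$, and the abstract theory yields the continuity $\rho \in C^1([0,T];\V')$. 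The main obstacle I anticipate is a careful verification that the abstract framework of \cite[Section 3.3]{AMS} really covers the present noncompact Riemannian situation, in particular that $\V$ is the correct energy domain (which is ensured by the identification $\V = W^{1,2}_0(\M^n)$ noted after \eqref{spazi}) and that essential self-adjointness of $(-\Delta)$ suffices to close the abstract picture.
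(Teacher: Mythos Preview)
Your proposal is correct and takes essentially the same approach as the paper: the paper simply states that the result is established by \cite[Theorem 3.4]{AMS} and gives no further argument, so your plan to invoke that theorem directly is exactly what is done. Your additional sketch of the underlying ingredients (energy identity, Minty's trick, Ole\u{\i}nik uniqueness, gradient-flow regularity) goes beyond what the paper provides and is a welcome elaboration rather than a deviation.
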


Weak energy solutions and strong variational solutions in fact coincide. 

\begin{proposition}[Equivalent notions of solution]\label{oleinik}
Let $P$ comply with \eqref{increas} and $P_\varepsilon $ ($ \varepsilon>0 $) be defined by \eqref{P-app}. Let $T>0$. Then for any nonnegative $\rho_0\in L^1(\mathbb{M}^n)\cap L^{\infty}(\mathbb{M}^n)$ the weak energy solution of \eqref{pme-approx} (provided by Proposition \ref{exunimain}) and the strong variational solution of \eqref{pme-evol-approx} (provided by Proposition \ref{prop: sol AMS}) are equal, up to $t=T>0$.
\end{proposition}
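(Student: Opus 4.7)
The plan is to show that the strong variational solution $\tilde{\rho}$ provided by Proposition~\ref{prop: sol AMS} is also a weak energy solution of \eqref{pme-approx} in the sense of Definition~\ref{defsol}, and then to invoke the uniqueness of the latter. Note that $\rho_0\in L^1(\mathbb{M}^n)\cap L^{\infty}(\mathbb{M}^n)\hookrightarrow \H$, so Proposition~\ref{prop: sol AMS} does supply a nonnegative $\tilde{\rho}\in W^{1,2}((0,T);\V,\V')$ satisfying \eqref{eee} with $\tilde{\rho}(0)=\rho_0$ in $\H$; in particular, the standard embedding $W^{1,2}((0,T);\V,\V')\hookrightarrow C([0,T];\H)$ applies.

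First I would verify the integrability requirements of Definition~\ref{defsol}. From $\tilde{\rho}\in L^2((0,T);\V)$ one immediately has $\tilde{\rho}\in L^2(\mathbb{M}^n\times(0,T))$; moreover, since $P_\varepsilon$ is globally Lipschitz on $[0,+\infty)$ with $P_\varepsilon(0)=0$, the same integrability holds for $P_\varepsilon(\tilde{\rho})$ and for $\nabla P_\varepsilon(\tilde{\rho})=(P_\varepsilon)'(\tilde{\rho})\,\nabla\tilde{\rho}$. To deduce the integral identity \eqref{sol-p2}, I would take any admissible $\eta\in W^{1,2}((0,T);\H)$ with $\nabla\eta\in L^2((0,T);\H)$ and $\eta(T)=0$, observe that $\eta\in L^2((0,T);\V)$ and $\tfrac{\d\eta}{\d t}\in L^2((0,T);\H)\subset L^2((0,T);\V')$, so that $\eta\in W^{1,2}((0,T);\V,\V')$ as well; then test \eqref{eee} against $\eta(t)$ for a.e.\ $t\in(0,T)$ and integrate in time. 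The standard integration-by-parts formula for the duality pairing $\langle \tfrac{\d\tilde{\rho}}{\d t},\eta\rangle_{\V',\V}$ kills the boundary term at $T$ by virtue of $\eta(T)=0$, while the one at $0$ contributes $-\int_{\mathbb{M}^n}\rho_0\,\eta(\cdot,0)\,\d\mathcal{V}$ thanks to the initial condition and the continuity $\tilde{\rho}\in C([0,T];\H)$. One thus recovers exactly \eqref{sol-p2} with $P\equiv P_\varepsilon$.

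The identification $\tilde{\rho}=\rho$ with the weak energy solution of Proposition~\ref{exunimain} then follows from the uniqueness statement contained in that very proposition. As explicitly remarked in its proof, the Ole\u{\i}nik-type argument needs the weak formulation to be tested only against functions in $W^{1,2}((0,T);\V)$, and the particular choice $\eta(x,t):=\int_t^T [P_\varepsilon(\rho(x,s))-P_\varepsilon(\tilde{\rho}(x,s))]\,\d s$ is admissible thanks to the $L^2$-bounds on $\nabla P_\varepsilon(\rho)$ and $\nabla P_\varepsilon(\tilde{\rho})$; the strict monotonicity of $P_\varepsilon$ finally gives $\rho=\tilde{\rho}$ on $[0,T]$. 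The only point deserving some care throughout is the time integration-by-parts formula, but this is a well-established fact in the Hilbert-triple setting $\V\hookrightarrow\H\hookrightarrow\V'$.
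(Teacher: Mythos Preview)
Your proposal is correct and follows essentially the same approach as the paper: you show that the strong variational solution satisfies the weak energy formulation \eqref{sol-p2} by integrating \eqref{eee} in time with the standard duality integration-by-parts, and then conclude by the Ole\u{\i}nik-type uniqueness argument already contained in the proof of Proposition~\ref{exunimain}. The only cosmetic difference is that the paper restricts to test functions $\eta\in W^{1,2}((0,T);\V)$ (which, as it explicitly remarks, suffices for uniqueness), whereas you verify \eqref{sol-p2} against the full class of admissible $\eta$; both routes are valid.
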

\begin{proof}
Let us denote by $ \hat{\rho} $ the solution constructed in Proposition \ref{prop: sol AMS}. Thanks to the integrability properties of $ \hat{\rho} $ and the $C^1$ regularity of the map $ \rho \mapsto P_\varepsilon(\rho) $, we know that $ \hat{\rho} \in L^2(\mathbb{M}^n \times (0,T)) $, which is equivalent to $ P_\varepsilon(\hat{\rho}) \in L^2(\mathbb{M}^n \times (0,T)) $, and $ \nabla \hat{\rho} \in L^2(\mathbb{M}^n \times (0,T)) $, which is equivalent to $ \nabla P_\varepsilon(\hat{\rho}) \in L^2(\mathbb{M}^n \times (0,T)) $. By \eqref{eee}, for any curve $ \eta \in W^{1,2}((0,T);W^{1,2}(\mathbb{M}^n)) $ with $ \eta(T)=0 $ there holds 
\begin{equation}\label{eee-3}
-\leftidx{_{\mathbb{V}^\prime}}{\!\left\langle \tfrac{\d}{\d t} \hat{\rho}(t) ,  \eta(t) \right\rangle}_{\mathbb{V}} = \int_{\mathbb{M}^n} \left\langle \nabla P_\varepsilon(\hat{\rho}(t)) \, , \nabla\eta(t) \right\rangle \d\mathcal{V}  \qquad \text{for a.e. } t \in (0,T) \, ;
\end{equation}
since both $ \hat{\rho} $ and $ \eta $ are continuous curves with values in $ L^2(\mathbb{M}^n) $, integrating \eqref{eee-3} between $ t=0 $ and $ t=T $ yields
\begin{equation*}\label{eee2}
\int_0^T \int_{\mathbb{M}^n} \hat{\rho} \,  \partial_t\eta \, \d\mathcal{V} \d t + \int_{\mathbb{M}^n} \rho_0(x) \, \eta(x,0) \,  \d\mathcal{V}(x)  = \int_0^T \int_{\mathbb{M}^n} \left\langle \nabla P_\varepsilon(\hat{\rho}) \, , \nabla\eta \right\rangle \d\mathcal{V} \d t \, ,
\end{equation*}
which shows that $ \hat{\rho} $ is also a weak energy solution of \eqref{pme-approx} starting from $ \rho_0 $ and therefore it coincides with the one provided by Proposition \ref{exunimain}, up to the observations made in the first part of the corresponding proof.
\end{proof}

\begin{remark}[On possibly different constructions of weak energy solutions]\label{R} \rm
In Subsection \ref{weak-sol} we used a well-established approach to prove existence of weak energy solutions of \eqref{pme-reg}, which consists in the first place of solving \emph{evolution} problems associated with nondegenerate nonlinearities on regular domains. As shown above, this technique is suitable to prove several key estimates, especially the smoothing effect of Proposition \ref{smooth-approx}. Nevertheless, we mention that there exists at least another fruitful method, which relies first on solving a \emph{discretized} version (in time) of problem \eqref{pme-reg} by means of the Crandall-Liggett Theorem (see \cite[Chapter 10]{V07} in the Euclidean context). This is precisely the technique employed in \cite[Section 3.3]{AMS} to construct solutions of \eqref{pme-reg} in the general setting considered therein; the advantage of such an approach is that it also works in nonsmooth frameworks (like metric-measure spaces). However, in that case the proof of the smoothing effect is less trivial and should be investigated further (one can no longer differentiate $ L^p $ norms along the flow), for instance by taking advantage of the abstract tools developed in \cite{CH}, which a priori work upon assuming the validity of the stronger \emph{Euclidean} Sobolev inequality \eqref{Sob-euc}.
\end{remark}

To implement the Hamiltonian approach described in the Introduction, it is necessary to study the linearization of \eqref{pme-evol-approx} along with its formal adjoint. More precisely, in the variational setting $\V \hookrightarrow \H \hookrightarrow \V'$ described above, we can consider the \emph{forward linearized} equation
\begin{equation}\label{linear}
\frac{\d}{\d t} w = \Delta\!\left[   P_{\varepsilon}^\prime \!\left(\rho \right) w \right], \qquad w(0) = w_0 \, ,
\end{equation} 
and the \emph{backward adjoint} equation
\begin{equation}\label{adjoint}
\frac{\d}{\d t} \phi =  - P_{\varepsilon}^\prime \!\left(\rho \right) \Delta \phi \, , \qquad \phi(T) = \phi_T \, .
\end{equation}

Following \cite[Theorem 4.5]{AMS}, we begin with rephrasing in our setting a well-posedness result for \eqref{linear}.
Hereafter we denote by $\D'$ the dual of $\D$, recalling that $\H \hookrightarrow \V' \hookrightarrow \D'$ with continuous and dense inclusions.

\begin{theorem}[Forward linearized equation]\label{pb:dual}
Let $P$ comply with \eqref{increas} and $P_\varepsilon $ ($ \varepsilon>0 $) be defined by \eqref{P-app}. Let $T>0$. For every nonnegative $\rho\in L^2((0,T);\mathbb{H})$ and for every $ w_0 \in \mathbb{V}'$, there exists a unique weak solution $w\in W^{1,2}((0,T);\mathbb{H}, \mathbb{D}')$ of \eqref{linear}, in the sense that it satisfies
\begin{equation}\label{pme-approx-s-weak-formulation}
\leftidx{_{\mathbb{V}^\prime}}{\!\left\langle w(r),\theta(r) \right\rangle}_{\mathbb{V}}-\int_{0}^r\int_{\mathbb{M}^n} \left[ \partial_t \theta(t)+P'_{\varepsilon}(\rho(t)) \, \Delta\theta(t) \right] w(t) \, \d\mathcal{V}\d t = \leftidx{_{\mathbb{V}^\prime}}{\!\left\langle w_0,\theta(0) \right\rangle}_{\mathbb{V}} \qquad \forall r \in [0,T]
\end{equation}
for every $\theta\in W^{1,2}((0,T);\mathbb{D},\mathbb{H})$. 
\end{theorem}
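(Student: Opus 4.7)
My strategy exploits the fact that the truncated coefficient $a := P_\varepsilon^\prime(\rho)$ is uniformly bounded from below by $\varepsilon$ and from above by a constant $C_\varepsilon$, by the very construction \eqref{P-app}, so that \eqref{linear} behaves like a uniformly parabolic linear equation (albeit in non-divergence form). Uniqueness will follow from a duality argument against the backward adjoint equation, while existence will come from a Galerkin approximation combined with a suitable energy estimate in the weak norm $\mathbb{V}^\prime$.

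The preliminary step is to establish well-posedness of the backward adjoint problem
\begin{equation*}
\tfrac{\d}{\d t}\theta + a \, \Delta\theta = f \, , \qquad \theta(T) = 0 \, ,
\end{equation*}
for arbitrary $f \in L^2((0,T);\mathbb{H})$. After the time reversal $s = T-t$ this becomes a standard forward uniformly parabolic linear equation, solvable by a Galerkin scheme built on a countable basis of $C_c^\infty(\mathbb{M}^n) \subset \mathbb{D}$ (density holding thanks to the essential self-adjointness of $-\Delta$, see \cite[Theorem 2.4]{Str}). The key a priori bound is obtained by testing the adjoint equation against $-\Delta\theta$, which together with $a \ge \varepsilon$ yields
\begin{equation*}
\| \nabla \theta(t) \|_{\mathbb{H}}^2 + \varepsilon \int_t^T \| \Delta \theta(\tau) \|_{\mathbb{H}}^2 \, \d\tau \leq \tfrac{1}{\varepsilon} \, \| f \|_{L^2((0,T);\mathbb{H})}^2 \qquad \forall t \in [0,T] \, ;
\end{equation*}
in particular $\theta(0) \in \mathbb{V}$ with a controlled norm. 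Uniqueness of $w$ is then immediate: if $\tilde w$ is a weak solution associated to $w_0 = 0$, plugging the adjoint solution $\theta$ (corresponding to an arbitrary $f$) into \eqref{pme-approx-s-weak-formulation} at $r=T$ makes every boundary term vanish, leaving $\int_0^T \int_{\mathbb{M}^n} f \, \tilde w \, \d\mathcal{V}\,\d t = 0$ for every $f \in L^2((0,T);\mathbb{H})$, and hence $\tilde w \equiv 0$.

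For existence, the plan is to approximate $w_0 \in \mathbb{V}^\prime$ by a sequence $\{w_0^n\} \subset \mathbb{V}$ and solve the corresponding Galerkin-projected problems, producing smooth $w^n$. The uniform bound is obtained by testing against $\vartheta^n := (I - \Delta)^{-1} w^n \in \mathbb{D}$, where $(I-\Delta)^{-1}$ is the Riesz isomorphism $\mathbb{V}^\prime \to \mathbb{V}$ associated with the form $\int_{\mathbb{M}^n}(\nabla u \cdot \nabla v + uv) \, \d\mathcal{V}$. Using $\Delta\vartheta^n = \vartheta^n - w^n$ and $a \ge \varepsilon$, one reaches
\begin{equation*}
\tfrac{1}{2} \, \tfrac{\d}{\d t} \|w^n\|_{\mathbb{V}^\prime}^2 + \varepsilon \|w^n\|_{\mathbb{H}}^2 \leq C_\varepsilon \, \|w^n\|_{\mathbb{V}^\prime}^2 \, ,
\end{equation*}
and Gronwall yields uniform bounds of $w^n$ in $L^\infty((0,T);\mathbb{V}^\prime) \cap L^2((0,T);\mathbb{H})$, hence of $\partial_t w^n = \Delta(a w^n)$ in $L^2((0,T);\mathbb{D}^\prime)$. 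Weak-$\ast$ convergence, together with the linearity of the weak formulation \eqref{pme-approx-s-weak-formulation}, identifies the limit as the desired $w \in W^{1,2}((0,T);\mathbb{H},\mathbb{D}^\prime)$.

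The main technical hurdle is that on a noncompact $\mathbb{M}^n$ no spectral decomposition of $-\Delta$ is available, forcing the Galerkin truncation to rely on a generic countable basis drawn from $C_c^\infty(\mathbb{M}^n)$; the resolvent $(I-\Delta)^{-1}$ used in the key energy estimate must likewise be interpreted via the (unique) self-adjoint extension of $-\Delta$ on the complete manifold $\mathbb{M}^n$, again guaranteed by \cite[Theorem 2.4]{Str}. Checking that the limiting $w$ enjoys the prescribed initial value $w_0$ (rather than a distinct one in $\mathbb{V}^\prime$) requires exploiting that $W^{1,2}((0,T);\mathbb{H},\mathbb{D}^\prime) \hookrightarrow C([0,T];\mathbb{V}^\prime)$ by standard interpolation, so that the pointwise evaluation $w(0)$ makes sense and is identified by taking $\theta$ supported near $t=0$ in \eqref{pme-approx-s-weak-formulation}.
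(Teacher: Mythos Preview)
The paper does not prove this theorem; it simply cites \cite[Theorem 4.5]{AMS}. Your direct approach is therefore a genuine addition, and the uniqueness argument via duality with the backward adjoint is correct and standard.

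The existence argument, however, has a real gap. You build $w^n$ in a finite-dimensional Galerkin space $V_n = \mathrm{span}\{e_1,\dots,e_n\} \subset C_c^\infty(\mathbb{M}^n)$ and then test against $\vartheta^n = (I-\Delta)^{-1}w^n$; but $\vartheta^n \notin V_n$ for a generic basis (precisely the situation you yourself flag on a noncompact manifold), so it is \emph{not} an admissible test function in the Galerkin system, and the identity $\tfrac12\tfrac{\d}{\d t}\|w^n\|_{\mathbb{V}'}^2 = \langle \partial_t w^n,\vartheta^n\rangle$ cannot be extracted from the approximate equations. The same defect undermines your preliminary step for the backward problem, where testing with $-\Delta\theta^n$ requires $\Delta\theta^n \in V_n$, which again fails. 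Two clean repairs: (i) replace the finite-dimensional projection by the spectral cut-off $E_n := \chi_{[0,n]}(-\Delta)$ furnished by the spectral theorem for the self-adjoint $-\Delta$; these projections commute with both $\Delta$ and $(I-\Delta)^{-1}$, so your estimates survive intact; or (ii) bypass Galerkin entirely and construct $w$ by transposition from the backward adjoint (well posed by Theorem~\ref{th: dual-back}), setting $\leftidx{_{\mathbb{V}'}}{\langle w(r),\varphi\rangle}_{\mathbb{V}} := \leftidx{_{\mathbb{V}'}}{\langle w_0,\theta(0)\rangle}_{\mathbb{V}}$ where $\theta$ solves the backward problem on $(0,r)$ with $\theta(r)=\varphi$. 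The latter is essentially the route in \cite{AMS}.
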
 

As concerns \eqref{adjoint} we have the following result, whose proof can be found in \cite[Theorem 4.1]{AMS}.

\begin{theorem}[Backward adjoint equation]\label{th: dual-back}
Let $P$ comply with \eqref{increas} and $P_\varepsilon $ ($ \varepsilon>0 $) be defined by \eqref{P-app}. Let $T>0$. For every nonnegative $\rho\in L^2((0,T);\mathbb{H})$ and for every $ \phi_T \in\mathbb{V}$, there exists a unique strong solution $\phi\in W^{1,2}((0,T);\mathbb{D},\mathbb{H})$ of \eqref{adjoint}.
Moreover, if ${\phi_T}\in L^{\infty}(\mathbb{M}^n) \cap \V $ then $\left\|\phi(t)\right\|_{L^\infty(\M^n)} \leq \left\| \phi_T \right\|_{L^{\infty}(\mathbb{M}^n)}$ for every $t\in[0,T]$.
\end{theorem}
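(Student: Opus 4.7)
The plan is to reduce \eqref{adjoint} to a forward uniformly-parabolic equation by a time-reversal and then invoke standard Hilbert-space parabolic theory in the scale $\mathbb{D}\hookrightarrow\mathbb{V}\hookrightarrow\mathbb{H}$. Setting $\psi(t):=\phi(T-t)$ and $b(t,x):=P'_{\varepsilon}(\rho(T-t,x))$, equation \eqref{adjoint} becomes the forward Cauchy problem
\begin{equation*}
\partial_t\psi=b\,\Delta\psi \qquad \text{in } \mathbb{M}^n \times (0,T) \, , \qquad \psi(0)=\phi_T \, .
\end{equation*}
By construction of $P_\varepsilon$ in \eqref{P-app}, the coefficient $b$ satisfies the uniform two-sided bound $\varepsilon\le b(t,x)\le P'(1/\varepsilon)+\varepsilon=:\Lambda_\varepsilon$ for a.e.~$(t,x)$, so in essence we face a linear parabolic equation with merely bounded measurable coefficient.

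For existence I would mollify $\rho$ in space-time to a sequence of smooth nonnegative functions $\rho_j$, set $b_j:=P'_\varepsilon(\rho_j)\in[\varepsilon,\Lambda_\varepsilon]$, and solve the regularized Cauchy problem $\partial_t\psi_j=b_j\Delta\psi_j$ with $\psi_j(0)=\phi_T$ by standard uniformly-parabolic theory (e.g.~Faedo--Galerkin along a spectral basis of $-\Delta$ combined with an exhaustion of $\mathbb{M}^n$ by bounded smooth domains with Dirichlet boundary conditions). The crucial a~priori estimate comes from multiplying by $-\Delta\psi_j$ and integrating by parts:
\begin{equation*}
\frac{1}{2}\frac{\d}{\d t}\left\|\nabla\psi_j(t)\right\|_\mathbb{H}^2=-\int_{\mathbb{M}^n}b_j\left(\Delta\psi_j\right)^2\d\mathcal{V}\le -\varepsilon\left\|\Delta\psi_j(t)\right\|_\mathbb{H}^2 ,
\end{equation*}
which yields $\|\nabla\psi_j(t)\|_\mathbb{H}\le\|\nabla\phi_T\|_\mathbb{H}$ for every $t\in[0,T]$, a uniform $L^2((0,T);\mathbb{D})$ bound on $\psi_j$, and, via the equation together with the upper bound on $b_j$, a uniform $L^2((0,T);\mathbb{H})$ bound on $\partial_t\psi_j$. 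An Aubin--Lions argument then produces a subsequence converging strongly in $L^2((0,T);\mathbb{V})$; coupling the pointwise a.e.~convergence $b_j\to b$ (uniformly bounded) with the weak convergence $\Delta\psi_j\rightharpoonup\Delta\psi$ in $L^2((0,T);\mathbb{H})$ is enough to identify the limit as a solution in $W^{1,2}((0,T);\mathbb{D},\mathbb{H})$, and reverting time delivers $\phi$.

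For uniqueness, if $\phi_1,\phi_2$ both solve \eqref{adjoint} with the same terminal datum, then $u:=\phi_1-\phi_2\in W^{1,2}((0,T);\mathbb{D},\mathbb{H})$ satisfies $u(T)=0$ and $\partial_tu=-b\,\Delta u$. The chain rule for curves in $L^2((0,T);\mathbb{D})$ with derivative in $L^2((0,T);\mathbb{H})$, combined with the self-adjointness of $-\Delta$, gives
\begin{equation*}
\frac{1}{2}\frac{\d}{\d t}\left\|\nabla u(t)\right\|_\mathbb{H}^2=-\int_{\mathbb{M}^n}\Delta u\cdot\partial_tu\,\d\mathcal{V}=\int_{\mathbb{M}^n}b\,(\Delta u)^2\,\d\mathcal{V}\ge 0 ,
\end{equation*}
so $t\mapsto\|\nabla u(t)\|_\mathbb{H}$ is non-decreasing on $[0,T]$. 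Since it vanishes at $t=T$ it is identically zero, which forces $u(t)$ to be constant in space for every $t$; being in $L^2(\mathbb{M}^n)$ with $\mathcal{V}(\mathbb{M}^n)=+\infty$, this constant must be zero, so $u\equiv 0$.

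Finally, the $L^\infty$ bound for $\phi_T\in L^\infty(\mathbb{M}^n)\cap\mathbb{V}$ comes from a Stampacchia-type truncation at the regularized level: testing $\partial_t\psi_j-b_j\Delta\psi_j=0$ against $(\psi_j-\|\phi_T\|_{L^\infty})_+/b_j$, exploiting the smoothness of $b_j$ to handle the extra term produced by $\partial_t(1/b_j)$, and applying Grönwall produces $\|\psi_j(t)\|_{L^\infty(\mathbb{M}^n)}\le\|\phi_T\|_{L^\infty(\mathbb{M}^n)}$ (with Grönwall constant depending on $j$ but the resulting $L^\infty$ bound independent of $j$); arguing symmetrically from below and passing to the limit by weak-$\ast$ lower semicontinuity in $L^\infty$ gives the claim for $\phi$. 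The main technical obstacle I anticipate is the rigorous justification of the chain-rule identity used both in the a~priori estimate and in the uniqueness argument at the regularity $W^{1,2}((0,T);\mathbb{D},\mathbb{H})$; this should be dispatched by a standard time-mollification procedure exploiting the self-adjointness of $-\Delta$ and the fact that the interpolation space between $\mathbb{D}$ and $\mathbb{H}$ is precisely $\mathbb{V}$.
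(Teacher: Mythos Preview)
The paper does not supply its own proof of this theorem: it simply refers to \cite[Theorem~4.1]{AMS}, where the result is established in the abstract Dirichlet-form setting via Lions-type variational methods and an abstract chain rule in the scale $\mathbb{D}\hookrightarrow\mathbb{V}\hookrightarrow\mathbb{H}$. Your proposal instead gives a self-contained PDE argument (time reversal, regularization of the coefficient, energy estimate from testing against $-\Delta\psi$, Aubin--Lions compactness, Stampacchia truncation for the $L^\infty$ bound). This is a genuinely different route: the approach of \cite{AMS} avoids regularizing $b$ altogether and works directly at the level of the Hilbert triple, whereas yours is concrete and closer in spirit to the constructions carried out in Section~\ref{basics-pme} of the paper. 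Both are valid.

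Two small points on your sketch. First, your Stampacchia step needs $\partial_t(1/b_j)$ bounded, hence $b_j$ differentiable in time; since $P$ is only assumed $C^1$, the function $P'_\varepsilon$ need not be Lipschitz, so mollifying $\rho$ alone may not make $b_j=P'_\varepsilon(\rho_j)$ smooth---you should also regularize $P'_\varepsilon$ (harmless, and the paper itself invokes exactly this extra layer of approximation in the proof of Proposition~\ref{exunimain}). Second, the passage to the limit in the product $b_j\Delta\psi_j$ deserves one more word: weak $L^2$ convergence of $\Delta\psi_j$ does not pair directly with mere pointwise convergence of $b_j$. The clean fix is to observe that $b_j\to b$ strongly in $L^2_{\mathrm{loc}}$ by dominated convergence, so for any $\eta\in L^2((0,T);\mathbb{H})$ with compact spatial support one has $b_j\eta\to b\eta$ strongly in $L^2$, which then pairs with $\Delta\psi_j\rightharpoonup\Delta\psi$; a density argument finishes the identification.
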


\section{Proof of the main results} \label{sec:str}

This section is entirely devoted to proving Theorems \ref{main-result} and \ref{optimal}. After a brief introduction to the strategy of proof of Theorem \ref{main-result} (Subsection \ref{subsec: out}), we will first treat the noncompact case (Subsection \ref{noncomp}) and then shortly address the compact case (Subsection \ref{compact}). Finally, in Subsection \ref{opt-small} we will show that our estimate is optimal for small times, namely Theorem \ref{optimal}.

\subsection{Outline of the strategy}\label{subsec: out}

The idea is to prove the stability estimate \eqref{wass-contr} for a suitable approximation of problem \eqref{pme}, passing to the limit in the approximation scheme only at the very end. Let us briefly sketch the main steps of the proof.
\begin{itemize}
\item[1.]  We firstly consider the ``elliptic'' nonlinearity $P_\varepsilon$ as in \eqref{P-app} and introduce a regular initial density $\rho_0$ belonging to $ L^\infty_c(\M^n) \cap \V $. We denote by $\rho$, $\phi$ and $w$ the solutions of the approximated problems \eqref{pme-approx}, \eqref{linear} and \eqref{adjoint}, respectively (for the moment for simplicity we omit the subscript $\varepsilon$).
\item[2.] We estimate the derivative $\frac{\d}{\d t} \mathcal{E}_{\rho(t)}[\phi(t)]$ of the Hamiltonian functional defined in \eqref{eq:weighted Dirichlet}. Here it is essential to exploit the lower bound on the Ricci curvature in the Bakry-\'Emery form \eqref{eq:BE-c}, which allows us to deduce that (Lemma \ref{deriv-ham-2})
 \begin{equation*}
\frac12 \frac{\d}{\d t} \, \mathcal{E}_{\rho(t)} \! \left[ \phi(t) \right] \ge -K \, \int_{\mathbb{M}^n} \Gamma(\phi(t)) \, P_\varepsilon\!\left( \rho(t) \right) \d\mathcal{V} \, .
\end{equation*}
We then use the smoothing effect provided by Proposition \ref{smooth-approx} to integrate the above  differential inequality; this yields the estimate
\begin{equation*}
\mathcal{E}_{\rho(t)}[\phi(t)] \geq \exp(-K\,C(t,m,n)) \, \mathcal{E}_{\rho_0}[\phi(0)] \, ,
\end{equation*}
where an explicit computation of $C(t,m,n)>0$ is given in Lemma \ref{deriv-ham-chiusa}.

\item[3.] We take a pair of initial data $\rho^0_0, \rho_0^1 \in L^\infty_c(\M^n) \cap \V $ and connect them by a \emph{regular} curve $ \{ \rho^s_0 \}_{s \in [0,1]} $ (in the sense of Definition \ref{regcurve}). For any $ \rho^s_0 $, hereafter $ t \mapsto \rho^s(t) $ will stand for the corresponding solution of \eqref{pme-approx} and $ \phi^s $ for a solution of \eqref{adjoint} with $ \rho \equiv \rho^s $. We then denote by $ (s,x) \mapsto Q_s \varphi(x) $ the (Lipschitz) solution of the Hopf-Lax problem \eqref{eq:hopf-lax} starting from an arbitrary $\varphi \in \mathrm{Lip}_c(\M^n)$ and by $w^s(t) \equiv t \mapsto \tfrac{\d }{\d s} \rho^s(t)$ the solution of the linearized equation \eqref{linear}. For every $t > 0$ we compute the Wasserstein distance $\cW(\rho^0(t),\rho^1(t))$ in the (Kantorovich) formulation recalled by Proposition  \ref{prop: hopf-lax duality} in terms of the Hamiltonian. The ``duality'' relation between $\phi^s$ and $w^s$ (Lemma \ref{formula-dualita}) guarantees that 
\begin{equation*}
\int_{\mathbb{M}^n} Q_1 \varphi \, \rho^1(t) \, \d\mathcal{V} - \int_{\mathbb{M}^n} \varphi \, \rho^0(t) \, \d\mathcal{V} = \int_0^1 \left(  -\frac12 \, \mathcal{E}_{\rho^s(t)} \! \left[ \phi^s(t) \right] + \leftidx{_{\mathbb{V}^\prime}}{\!\left\langle w^s(0),\phi^s(0) \right\rangle}_{\mathbb{V}}  \right) \d s \, ,
\end{equation*}
where the final datum of $ \phi^s $ is given at time $ T \equiv t $ by $\phi^s(t) = Q_s\varphi$. 
\item[4.] By exploiting the regularity of the curve $s \mapsto \rho_0^s \mathcal{V} =: \mu^s $, we can take advantage of the key identity
\begin{equation*}
\int_0^1 \left| \dot{\mu}^s \right|^2 \d s =\int_0^1 \mathcal{E}_{\rho_0^s}^\ast \! \left[ \tfrac{\d}{\d s}\rho_0^s \right]  \d s \, .
\end{equation*}
By combining the latter with the estimate obtained in Step 2 and recalling the definition \eqref{eq:dual weighted Dirichlet} of the (Fenchel) \emph{dual} Hamiltonian $ \mathcal{E}^\ast_\rho $, we can deduce that  
\begin{equation*}
\int_{\mathbb{M}^n} Q_1 \varphi \, \rho^1(t) \, \d\mathcal{V} - \int_{\mathbb{M}^n} \varphi \, \rho^0(t) \, \d\mathcal{V} \leq \frac12  \exp\{K\,C(t,m,n)\}\, \int_0^1 \left| \dot{\mu}^s \right|^2 \d s \, ;
\end{equation*}
this is the content of Lemma \ref{l1}.
\item[5.] We use Lemma \ref{lemma12-2ams}, which ensures that the right-hand side can be made arbitrarily close to the squared Wasserstein distance between $ \rho^0_0 $ and $ \rho^1_0 $ (this in fact implies a further approximation of the initial data). As a consequence, we end up with 
\begin{equation*}
\int_{\mathbb{M}^n} Q_1 \varphi \, \rho^1_\varepsilon(t) \, \d\mathcal{V} - \int_{\mathbb{M}^n} \varphi \, \rho^0_\varepsilon(t) \, \d\mathcal{V} \leq \frac12 \exp\{K\,C(t,m,n)\} \, \cW^2\!\left(\rho_0^0, \rho_0^1\right) ,
\end{equation*} 
where we have reintroduced the dependence on $ \varepsilon $ in view of the last passage to the limit.

\item[6.] By virtue of \eqref{conv-epsilon-bis}, we are allowed to first pass to the limit as $\varepsilon \downarrow 0$ and then take the supremum over all $\varphi \in \mathrm{Lip}_c(\M^n)$, which yields  
\begin{equation*}
\mathcal{W}_2 \! \left( \rho^0(t) , \rho^1(t) \right) \le \exp\{K\, C(t,m,n)\} \, \cW\!\left(\rho_0^0, \rho_0^1\right).
\end{equation*}

\item[7.] We exploit Proposition \ref{compact-support} in order to show that such solutions do belong to $\Mdue$ for all times; here we apply inductively the stability estimate itself in the form $\cW(\rho(t),\rho(t+ \tau))$, for small $ \tau>0 $, along with \eqref{elem}.
Then, upon approximating the initial data, we show that the stability estimate extends to the whole class $\Mdue.$
\item[8.] As a final step, we prove that the solutions constructed above are indeed weak Wasserstein solutions, in the sense of Definition \ref{defsol-w}. This basically follows from the smoothing effect \eqref{smoothing} and the energy inequality \eqref{eest}. Uniqueness of Wasserstein solutions is also a direct consequence of the uniqueness result for weak energy solutions, together with their regularity properties.
\end{itemize}

\subsection{The noncompact case}\label{noncomp}

Throughout this whole subsection we will assume again that $ \mathbb{M}^n $ is in addition noncompact and with infinite volume, hence we will carry out the proof of Theorem \ref{main-result} in this case only. We will then discuss in Subsection \ref{compact} the (simple) modifications required to deal with compact manifolds.

Let $ \rho$ be a weak energy solution of \eqref{pme-approx} and let $\phi$ be a strong variational solution of the associated \emph{backward adjoint} problem, according to Theorem \ref{th: dual-back}.
Upon recalling \eqref{eq:weighted Dirichlet}, we define the \emph{Hamiltonian} functional as
\begin{equation*}\label{def:ham}
\mathcal{E}_{\rho(t)} \! \left[ \phi(t) \right] := \int_{\mathbb{M}^n} \Gamma(\phi(t)) \, \rho(t) \, \d\mathcal{V} \,.
\end{equation*} 
Following \cite{AMS}, we firstly connect the time derivative of the Hamiltonian with the \emph{carr\'e du champ} operators defined in \eqref{gamma} and \eqref{gamma2} (see \cite[Theorem 11.1 and Lemma 11.2]{AMS} for a detailed proof). 

\begin{lemma}\label{deriv-ham}
Let $ P$ comply with \eqref{increas}, $P_\varepsilon $ ($ \varepsilon>0 $) be defined by \eqref{P-app} and $T>0$. Let $ \rho\in \mathcal{ND}_{P_\varepsilon}(0,T)$ be a bounded solution of \eqref{pme-approx}, provided by Proposition \ref{prop: sol AMS}. Let $ \phi\in W^{1,2}((0,T);\mathbb{D},\mathbb{H})$ be a bounded strong solution of \eqref{adjoint}, provided by Theorem \ref{th: dual-back}. Then the map $ t \mapsto \mathcal{E}_{\rho(t)} \! \left[ \phi(t) \right] $ is absolutely continuous in $ [0,T] $ and satisfies the identity
\begin{equation*}\label{deriv-ham-formula}
\frac12 \frac{\d}{\d t} \, \mathcal{E}_{\rho(t)} \! \left[ \phi(t) \right] = \boldsymbol{\Gamma}_2[\phi(t);P_\varepsilon(\rho(t))] + \int_{\mathbb{M}^n} R(\rho(t)) \left( \Delta \phi(t) \right)^2 \d\mathcal{V} \qquad \text{a.e\ in} \ (0,T) \, ,
\end{equation*}
where 
\begin{equation*}
R(\rho):= \rho \left(P_\varepsilon \right)^\prime\!(\rho) - P_\varepsilon(\rho) \qquad \forall \rho \ge 0 \, .
\end{equation*}
\end{lemma}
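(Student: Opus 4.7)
The plan is to perform a formal chain rule for $t\mapsto \mathcal{E}_{\rho(t)}[\phi(t)]=\int_{\mathbb{M}^n}\Gamma(\phi(t))\,\rho(t)\,\d\mathcal{V}$, substituting the forward equation $\tfrac{\d}{\d t}\rho=\Delta P_\varepsilon(\rho)$ and the backward equation $\tfrac{\d}{\d t}\phi=-P'_\varepsilon(\rho)\Delta\phi$, and then algebraically rearrange the pieces that appear so as to recognise $\boldsymbol{\Gamma}_2[\phi;P_\varepsilon(\rho)]$ together with a residual involving $R(\rho)$. Absolute continuity of the map will then follow once one checks that the resulting derivative lies in $L^1((0,T))$.

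Concretely, differentiating and using the forward equation in the first integral and the backward equation in the second, one obtains
\[
\frac{\d}{\d t}\int_{\mathbb{M}^n}\rho\,\Gamma(\phi)\,\d\mathcal{V}=\int_{\mathbb{M}^n}\Gamma(\phi)\,\Delta P_\varepsilon(\rho)\,\d\mathcal{V}-2\int_{\mathbb{M}^n}\rho\left\langle\nabla\phi,\nabla\!\left(P'_\varepsilon(\rho)\Delta\phi\right)\right\rangle\d\mathcal{V}.
\]
Integrating by parts the last term, using $\mathrm{div}(\rho\nabla\phi)=\rho\,\Delta\phi+\langle\nabla\rho,\nabla\phi\rangle$ together with the identity $\nabla P_\varepsilon(\rho)=P'_\varepsilon(\rho)\nabla\rho$, one gets
\[
-2\int_{\mathbb{M}^n}\rho\left\langle\nabla\phi,\nabla(P'_\varepsilon(\rho)\Delta\phi)\right\rangle\d\mathcal{V}=2\int_{\mathbb{M}^n}\rho\,P'_\varepsilon(\rho)\,(\Delta\phi)^2\,\d\mathcal{V}+2\int_{\mathbb{M}^n}\Delta\phi\,\Gamma(\phi,P_\varepsilon(\rho))\,\d\mathcal{V}.
\]
Comparing with the definition \eqref{gamma2}, the identity $\int\Gamma(\phi)\Delta P_\varepsilon(\rho)+2\int\Delta\phi\,\Gamma(\phi,P_\varepsilon(\rho))=2\boldsymbol{\Gamma}_2[\phi;P_\varepsilon(\rho)]-2\int P_\varepsilon(\rho)(\Delta\phi)^2$ holds; adding the $2\int\rho P'_\varepsilon(\rho)(\Delta\phi)^2$ term and dividing by two produces exactly the claimed formula, since $\rho P'_\varepsilon(\rho)-P_\varepsilon(\rho)=R(\rho)$.

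The main obstacle is to make these manipulations rigorous in the prescribed regularity class $\rho\in\mathcal{ND}_{P_\varepsilon}(0,T)$, $\phi\in W^{1,2}((0,T);\mathbb{D},\mathbb{H})$, both bounded. The algebraic identity above is essentially a calculus exercise, while the real work is to verify that each of the four integrals involved defines an $L^1((0,T))$ function of time, and that the formal integrations by parts are legitimate. The boundedness of $\rho,\phi$, the uniform bound on $P'_\varepsilon$ on the range of $\rho$, the inclusions $P_\varepsilon(\rho)\in L^2((0,T);\mathbb{D})$ and $\Delta\phi\in L^2((0,T);\mathbb{H})$, combined with the fact that $\phi\in L^\infty\cap\mathbb{D}$ implies $\Gamma(\phi)\in L^2(\mathbb{M}^n)$, cover all the required integrability. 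The chain-rule identity is then obtained by a standard density procedure, mollifying $\rho$ and $\phi$ in time and passing to the limit using the above estimates. This is precisely the route followed in \cite[Theorem~11.1 and Lemma~11.2]{AMS} in their abstract Dirichlet-form framework, and it applies verbatim to the nonlinearity $P_\varepsilon$, which falls within the class treated there; accordingly, the proof consists in verifying that our setting fulfils the hypotheses of those results and then invoking them.
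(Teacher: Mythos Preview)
Your proposal is correct and takes essentially the same approach as the paper: the paper itself does not provide a detailed proof of this lemma but refers directly to \cite[Theorem~11.1 and Lemma~11.2]{AMS}, which is precisely what you invoke after your (correct) formal derivation.
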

By requiring the additional assumption \eqref{fund-ineq} on the nonlinearity, we are able to exploit the curvature bound \eqref{ricci-K} in the Bakry-\'Emery form \eqref{eq:BE-c}.
\begin{lemma}\label{deriv-ham-2}
Let the hypotheses of Lemma \ref{deriv-ham} hold. Assume in addition that $\mathbb{M}^n$ ($ n \ge 3 $) complies with \eqref{ricci-K} and $ P $ complies with \eqref{fund-ineq}. Then
\begin{equation}\label{deriv-ham-below}
\frac12 \frac{\d}{\d t} \, \mathcal{E}_{\rho(t)} \! \left[ \phi(t) \right] \ge -K \, \int_{\mathbb{M}^n} \Gamma(\phi(t)) \, P_\varepsilon\!\left( \rho(t) \right) \d\mathcal{V} \qquad \text{a.e\ in} \ (0,T) \, .
\end{equation}
\end{lemma}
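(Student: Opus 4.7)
The plan is to combine the pointwise identity of Lemma \ref{deriv-ham} with the Bakry--\'Emery inequality \eqref{eq:BE-c} in the form $\mathrm{BE}(-K,n)$, applied with the pair $(f,\rho)=(\phi(t),P_\varepsilon(\rho(t)))$, and then to recognize that the leftover second-order term has a sign thanks to the McCann condition \eqref{fund-ineq}. Concretely, starting from
\begin{equation*}
\tfrac12 \tfrac{\d}{\d t} \mathcal{E}_{\rho(t)}[\phi(t)] = \boldsymbol{\Gamma}_2[\phi(t);P_\varepsilon(\rho(t))] + \int_{\mathbb{M}^n} R(\rho(t))\, (\Delta \phi(t))^2\, \d\mathcal{V},
\end{equation*}
with $R(\rho)=\rho P_\varepsilon'(\rho)-P_\varepsilon(\rho)$, I would bound the first summand from below by $\mathrm{BE}(-K,n)$:
\begin{equation*}
\boldsymbol{\Gamma}_2[\phi(t);P_\varepsilon(\rho(t))] \ge -K \int_{\mathbb{M}^n} \Gamma(\phi(t))\, P_\varepsilon(\rho(t))\, \d\mathcal{V} + \tfrac{1}{n}\int_{\mathbb{M}^n} (\Delta \phi(t))^2\, P_\varepsilon(\rho(t))\, \d\mathcal{V}.
\end{equation*}
Summing then gives a coefficient of $(\Delta\phi(t))^2$ equal to $\tfrac{1}{n} P_\varepsilon(\rho(t)) + R(\rho(t)) = \rho(t) P_\varepsilon'(\rho(t)) - (1-\tfrac{1}{n})P_\varepsilon(\rho(t))$, which is nonnegative by \eqref{est-peps-4} (the McCann-type condition for the regularized nonlinearity, which follows from \eqref{fund-ineq}). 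Dropping this nonnegative term yields the claim.

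The main technical obstacle is checking that the integral Bakry--\'Emery inequality \eqref{eq:BE-c} is actually applicable pointwise in $t$: one needs $(\phi(t),P_\varepsilon(\rho(t))) \in \mathbb{D}_\infty$ with $P_\varepsilon(\rho(t))\ge 0$, for a.e.~$t\in (0,T)$. The nonnegativity is immediate, since $\rho(t)\ge 0$ and $P_\varepsilon$ is strictly increasing with $P_\varepsilon(0)=0$. For the regularity, from $\rho\in\mathcal{ND}_{P_\varepsilon}(0,T)$ we have $P_\varepsilon(\rho)\in L^2((0,T);\mathbb{D})$, and by assumption $\rho$ is bounded, so $P_\varepsilon(\rho(t))\in \mathbb{D}\cap L^\infty(\mathbb{M}^n)$ for a.e.~$t$. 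Similarly, from $\phi\in W^{1,2}((0,T);\mathbb{D},\mathbb{H})$ we have $\phi(t)\in \mathbb{D}$ for a.e.~$t$, and by Theorem \ref{th: dual-back} $\phi(t)\in L^\infty(\mathbb{M}^n)$ provided the final datum is (as in the situation we care about). Together these place the pair in $\mathbb{D}_\infty$, so that \eqref{eq:BE-c} is legitimate for a.e.~$t\in(0,T)$, which is exactly the regularity already guaranteed for the identity in Lemma \ref{deriv-ham}. Once the Bakry--\'Emery step is justified, the rest is just algebraic manipulation of the coefficient of $(\Delta\phi)^2$, so the argument reduces to two lines of bookkeeping.
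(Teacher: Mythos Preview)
Your argument is correct and matches the paper's proof essentially line by line: combine Lemma~\ref{deriv-ham} with the Bakry--\'Emery inequality \eqref{eq:BE-c} applied to $(\phi(t),P_\varepsilon(\rho(t)))$, collect the coefficient of $(\Delta\phi(t))^2$ into $\rho P_\varepsilon'(\rho)-(1-\tfrac{1}{n})P_\varepsilon(\rho)$, and drop it via \eqref{est-peps-4}. Your extra paragraph verifying $(\phi(t),P_\varepsilon(\rho(t)))\in\mathbb{D}_\infty$ for a.e.~$t$ is a welcome justification that the paper leaves implicit.
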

\begin{proof}
By combining Lemma \ref{deriv-ham} and the Bakry-\'Emery condition \eqref{eq:BE-c} with $ f \equiv \phi(t) $ and $ \rho \equiv P_\varepsilon\!\left( \rho(t) \right) $, we deduce that
\begin{equation*}\label{eq:dh1}
\begin{aligned}
 \frac12 \frac{\d}{\d t} \, \mathcal{E}_{\rho(t)} \! \left[ \phi(t) \right] \ge & \, -K \, \int_{\mathbb{M}^n} \Gamma(\phi(t)) \, P_\varepsilon\!\left( \rho(t) \right) \d\mathcal{V} \\
 &+ \int_{\mathbb{M}^n} \left[ \rho(t) \left(P_\varepsilon \right)^\prime\!(\rho(t)) - \left(1 - \tfrac{1}{n}\right)P_\varepsilon(\rho(t)) \right] \left( \Delta \phi(t) \right)^2 \d\mathcal{V} \, .
\end{aligned}
\end{equation*}
The conclusion follows upon taking advantage of \eqref{est-peps-4}.
\end{proof} 

If $ K > 0 $ in general it is not clear how to bound the r.h.s.~of \eqref{deriv-ham-below} in terms of the Hamiltonian itself. Nevertheless, if $ P$ complies with \eqref{below-above-prime} and the Sobolev-type inequality \eqref{Sob} holds, the smoothing effect provided by Proposition \ref{smooth-approx} allows us to do so.

\begin{lemma}\label{deriv-ham-chiusa} 
Let $\mathbb{M}^n$ ($ n \ge 3 $) comply with \eqref{ricci-K} and \eqref{Sob}. Let $ P$ comply with \eqref{increas}, \eqref{below-above-prime} and \eqref{fund-ineq}. Let $ T>0 $ and $\rho_\varepsilon \in \mathcal{ND}_{P_\varepsilon}(0,T)$ be the (weak energy) solution of \eqref{pme-approx} corresponding to some nonnegative $ \rho_0 \in L^1(\mathbb{M}^n) \cap L^\infty(\mathbb{M}^n) \cap W^{1,2}(\mathbb{M}^n) $ with $ \| \rho_0 \|_{L^1(\mathbb{M}^n) } = : M $ (recall Proposition \ref{oleinik}), where $P_\varepsilon $ ($ \varepsilon>0 $) is defined by \eqref{P-app}. Let $ \phi\in W^{1,2}((0,T);\mathbb{D},\mathbb{H})$ be a bounded solution of \eqref{adjoint} provided by Theorem \ref{th: dual-back}. Suppose that $ \varepsilon $ is so small that 
\begin{equation*}\label{cond-eps}
\left\| \rho_0 \right\|_{{L}^\infty(\mathbb{M}^n)} \le \frac{1}{\varepsilon} \, .
\end{equation*}
Then
\begin{equation}\label{int-ham-chiusa}
\mathcal{E}_{\rho_\varepsilon(t)} \! \left[ \phi(t) \right] \ge \exp\left\{ -2K \, c_1 \, \mathfrak{C}_{m} \left[ \left(tM^{m-1}\right)^{\frac{2}{2+n(m-1)}}  \vee \left(tM^{m-1}\right) + \frac{\varepsilon}{c_1 \mathfrak{C}_m} t \right]  \right\} \mathcal{E}_{\rho_0} \! \left[ \phi(0) \right] \qquad \forall t \ge 0 \, ,
\end{equation}
where $ C>0 $ is the same constant appearing in \eqref{smoothing-one} and 
\begin{equation}\label{def-cm}
\mathfrak{C}_{m} := C^{m-1} \, 2^{m-2} \left[ 2+n(m-1) \right] .
\end{equation}
\end{lemma}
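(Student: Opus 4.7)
The plan is to turn the one-sided differential inequality of Lemma~\ref{deriv-ham-2} into a Grönwall inequality by controlling the weighted Dirichlet integral $\int_{\mathbb{M}^n}\Gamma(\phi)\,P_\varepsilon(\rho_\varepsilon)\,\d\mathcal{V}$ by a time-dependent multiple of $\mathcal{E}_{\rho_\varepsilon(t)}[\phi(t)]$; the quantitative link between the two is supplied by the smoothing effect \eqref{smoothing-one}.

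First, I would estimate $P_\varepsilon(\rho_\varepsilon)$ pointwise. Integrating the right-hand bound in \eqref{below-above-prime} yields $P(\rho)\le c_1\,\rho^m$ for every $\rho\ge 0$; combining with \eqref{est-peps-1} and factoring out one power of $\rho_\varepsilon$ gives
\[
P_\varepsilon(\rho_\varepsilon(x,t)) \le \bigl(c_1\,\|\rho_\varepsilon(t)\|_{L^\infty(\mathbb{M}^n)}^{m-1}+\varepsilon\bigr)\,\rho_\varepsilon(x,t) \qquad \text{a.e.\ in } \mathbb{M}^n\times(0,T).
\]
Since $\|\rho_0\|_{L^\infty(\mathbb{M}^n)}\le 1/\varepsilon$, the smoothing estimate \eqref{smoothing-one} from Proposition~\ref{smooth-approx} is available, and setting
\[
A(t):=C^{m-1}\Bigl(t^{-\frac{n}{2+n(m-1)}}M^{\frac{2}{2+n(m-1)}}+M\Bigr)^{m-1},
\]
we obtain $\|\rho_\varepsilon(t)\|_{L^\infty(\mathbb{M}^n)}^{m-1}\le A(t)$ and therefore
\[
\int_{\mathbb{M}^n}\Gamma(\phi(t))\,P_\varepsilon(\rho_\varepsilon(t))\,\d\mathcal{V}\le \bigl(c_1 A(t)+\varepsilon\bigr)\,\mathcal{E}_{\rho_\varepsilon(t)}[\phi(t)].
\]
Plugging this into \eqref{deriv-ham-below} produces the closed scalar inequality $\tfrac{\d}{\d t}\mathcal{E}_{\rho_\varepsilon(t)}[\phi(t)]\ge -2K(c_1 A(t)+\varepsilon)\,\mathcal{E}_{\rho_\varepsilon(t)}[\phi(t)]$; since Lemma~\ref{deriv-ham} guarantees absolute continuity of $t\mapsto\mathcal{E}_{\rho_\varepsilon(t)}[\phi(t)]$, Grönwall's lemma applied to $\log\mathcal{E}$ yields
\[
\mathcal{E}_{\rho_\varepsilon(t)}[\phi(t)]\ge \exp\!\Bigl\{-2K\Bigl(c_1\!\int_0^t\!A(s)\,\d s+\varepsilon\,t\Bigr)\Bigr\}\,\mathcal{E}_{\rho_0}[\phi(0)].
\]

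It then remains to evaluate $\int_0^t A(s)\,\d s$ and match the constants. Using the elementary sub-additivity $(a+b)^{m-1}\le 2^{(m-2)_+}(a^{m-1}+b^{m-1})$ (treating the subcases $m\ge 2$ and $1<m<2$ separately, the latter via concavity), and noting that $\tfrac{n(m-1)}{2+n(m-1)}<1$ so the singular integral converges, a direct computation gives
\[
\int_0^t\!A(s)\,\d s\le 2^{m-2}C^{m-1}\!\left[\tfrac{2+n(m-1)}{2}\bigl(tM^{m-1}\bigr)^{\!\frac{2}{2+n(m-1)}}+tM^{m-1}\right]\!=\tfrac{\mathfrak{C}_m}{2}\bigl(tM^{m-1}\bigr)^{\!\frac{2}{2+n(m-1)}}+\tfrac{\mathfrak{C}_m}{2+n(m-1)}\bigl(tM^{m-1}\bigr),
\]
with $\mathfrak{C}_m$ as in \eqref{def-cm}. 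Since $\tfrac{1}{2}+\tfrac{1}{2+n(m-1)}\le 1$ (recall $n\ge 3$ and $m>1$), the right-hand side is bounded by $\mathfrak{C}_m\bigl[(tM^{m-1})^{2/(2+n(m-1))}\vee (tM^{m-1})\bigr]$, and inserting this into the Grönwall estimate gives exactly \eqref{int-ham-chiusa}.

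The main bookkeeping obstacle is precisely the last step: verifying that the two time-power contributions produced by the integration (the singular one at $s=0$ and the linear one from the additive $M$ in the smoothing effect) can be absorbed, with the same prefactor $\mathfrak{C}_m$, into the maximum $(tM^{m-1})^{2/(2+n(m-1))}\vee(tM^{m-1})$, and correctly handling the factor $(a+b)^{m-1}$ uniformly for $m$ in a bounded subset of $(1,+\infty)$. Everything else reduces to combining Lemma~\ref{deriv-ham-2}, the smoothing effect \eqref{smoothing-one}, and Grönwall's lemma.
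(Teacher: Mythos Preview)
Your approach is the same as the paper's: combine Lemma~\ref{deriv-ham-2} with the pointwise bound $P_\varepsilon(\rho_\varepsilon)\le (c_1\|\rho_\varepsilon(t)\|_\infty^{m-1}+\varepsilon)\rho_\varepsilon$, insert the smoothing estimate \eqref{smoothing-one}, and integrate the resulting Gr\"onwall inequality. The only discrepancy is in the final bookkeeping, and it is precisely the point you flag as the obstacle.

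Your sub-additivity bound $(a+b)^{m-1}\le 2^{(m-2)_+}(a^{m-1}+b^{m-1})$ is correct, but in the displayed integral estimate you silently replace $2^{(m-2)_+}$ by $2^{m-2}$. For $1<m<2$ one has $2^{(m-2)_+}=1>2^{m-2}$, so the displayed bound does not follow; and indeed the inequality you would then need, namely $\frac{2+n(m-1)}{2}\tau^{\beta}+\tau\le 2^{m-2}[2+n(m-1)](\tau^{\beta}\vee\tau)$ with $\beta=\frac{2}{2+n(m-1)}$, fails at $\tau=1$ as $m\downarrow 1$ (left side $\to 2$, right side $\to 1$). Thus your argument as written does not recover the exact constant $\mathfrak{C}_m$ in the range $1<m<2$. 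The paper avoids this by using instead the bound $(a+b)^{m-1}\le 2^{m-1}\max(a,b)^{m-1}$, which is valid for \emph{all} $m>1$: writing $g_m(s):=(s^{-n/(2+n(m-1))}+1)^{m-1}$, this gives $g_m(s)\le 2^{m-1}s^{-n(m-1)/(2+n(m-1))}$ for $s<1$ and $g_m(s)\le 2^{m-1}$ for $s\ge 1$, and integrating piecewise with a split at $\tau=tM^{m-1}=1$ yields exactly $\int_0^\tau g_m\le 2^{m-2}[2+n(m-1)](\tau^{\beta}\vee\tau)$. With that single replacement your argument is complete and matches \eqref{int-ham-chiusa}.
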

\begin{proof}
By combining inequalities \eqref{est-peps-1} and \eqref{deriv-ham-below}, we obtain:
\begin{equation}\label{deriv-ham-below-1}
\frac12 \frac{\d}{\d t} \, \mathcal{E}_{ \rho_\varepsilon(t)} \! \left[\phi(t)\right] \ge -K \, \int_{\mathbb{M}^n} \Gamma(\phi(t)) \left[ P(\rho_\varepsilon(t)) + \varepsilon \rho_\varepsilon(t) \right] \d\mathcal{V} \, .
\end{equation}
Thanks to Proposition \ref{smooth-approx}, we know that 
\begin{equation}\label{smoothing-rho}
\begin{aligned}
\left\| \rho_\varepsilon(t) \right\|_{L^\infty\left(\mathbb{M}^n\right)}^{m-1} \le & \, C^{m-1} \left( t^{-\frac{n}{2+n(m-1)}} \left\| \rho_0 \right\|_{{L}^1(\mathbb{M}^n)}^{\frac{2}{2+n(m-1)}} + \left\| \rho_0 \right\|_{{L}^1(\mathbb{M}^n)} \right)^{m-1} \\
= & \, C^{m-1} M^{m-1} \, g_m\!\left( t \, M^{m-1} \right) \qquad \forall t>0 \, ,
\end{aligned}
\end{equation}
where 
\begin{equation*}\label{fm-1}
g_m(s) :=  \left( s^{-\frac{n}{2+n(m-1)}} + 1 \right)^{m-1} \qquad \forall s>0 \, .
\end{equation*}
It is apparent that 
\begin{equation}\label{fm-2}
g_m(s) \le
\begin{cases}
2^{m-1} \, s^{-\frac{n(m-1)}{2+n(m-1)}} & \text{if } s \in (0,1) \, , \\
 2^{m-1} & \text{if } s \ge 1 \, .
\end{cases}
\end{equation}
If we plug \eqref{smoothing-rho} in \eqref{deriv-ham-below-1} and recall that $ P(\rho)/\rho \le c_1 \, \rho^{m-1} $, we find:
\begin{equation}\label{deriv-ham-below-2}
\begin{aligned}
\frac12 \frac{\d}{\d t} \, \mathcal{E}_{\rho_\varepsilon(t)} \! \left[ \phi(t) \right] \ge & - K \, \int_{\mathbb{M}^n} \Gamma(\phi(t)) \, \rho_\varepsilon(t) \left[c_1 \, \rho_\varepsilon(t)^{m-1} + \varepsilon \right] \d\mathcal{V} \\
\ge & -K \left[c_1 \, C^{m-1} M^{m-1} \, g_m\!\left( t \, M^{m-1} \right) + \varepsilon  \right] \mathcal{E}_{\rho_\varepsilon(t)} \! \left[ \phi(t) \right] ;
\end{aligned}
\end{equation}
by integrating \eqref{deriv-ham-below-2} we therefore obtain 
\begin{equation}\label{int-ham-x}
\mathcal{E}_{\rho_\varepsilon(t)} \! \left[ \phi(t) \right] \ge \exp\left\{ -2K \left( c_1 \, C^{m-1} \int_0^{t \, M^{m-1}} g_m(s) \, \d s + \varepsilon t \right)  \right\} \mathcal{E}_{\rho_0} \! \left[ \phi(0) \right] \qquad \forall t \ge 0 \, .
\end{equation}
In order to suitably simplify \eqref{int-ham-x}, by exploiting \eqref{fm-2} we easily infer that 
\begin{equation*}\label{int-ham-xx}
\int_0^\tau g_m(s) \, \d s \le
\begin{cases}
2^{m-1} \, \frac{2+n(m-1)}{2} \, \tau^{\frac{2}{2+n(m-1)}} & \text{if } \tau \in (0,1) \, , \\
2^{m-1} \left[ \tau + \frac{n(m-1)}{2} \right] & \text{if } \tau \ge 1 \, ,
\end{cases}
\end{equation*}
which implies
\begin{equation*}\label{int-ham-xxx}
\int_0^\tau g_m(s) \, \d s \le 2^{m-2} \left[ 2+n(m-1) \right] \left( \tau^{\frac{2}{2+n(m-1)}}  \vee \tau \right) \qquad \forall \tau>0 \, ,
\end{equation*}
whence \eqref{int-ham-chiusa}.
\end{proof} 
 
In the following, we will connect any two (sufficiently regular) initial data $ \rho^0_0 $ and $ \rho^1_0 $ with a \emph{regular} curve $ \{ \rho^s_0 \}_{s \in [0,1]} $ (in the sense of Definition \ref{regcurve}) and consider the corresponding solution $t \mapsto  \rho_\varepsilon^s(t) $ of \eqref{pme-approx} with initial datum $ \rho^s_0 $, that is
\begin{equation}\label{pme-approx-s}
\begin{cases}
\partial_t \rho_\varepsilon^s = \Delta P_\varepsilon(\rho_\varepsilon^s) & \text{in } \mathbb{M}^n \times \mathbb{R}^+ \, , \\
\rho_{\varepsilon}^s(0)= \rho^s_0 & \text{on } \mathbb{M}^n \times \{ 0 \} \,.
\end{cases}
\end{equation} 
Reasoning as in \cite{AMS}, we will exploit the lower bound on the Hamiltonian ensured by Lemma \ref{deriv-ham-chiusa} in order to prove the stability estimate \eqref{wass-contr}. We start by studying the quantity
$$
s \mapsto \int_{\mathbb{M}^n} Q_s \varphi \, \rho_\varepsilon^s(t) \, \d \mathcal{V}  \, ,
$$
where $ \varphi \in \mathrm{Lip}_c(\mathbb{M}^n) $ is arbitrary but fixed and $ [0,1] \times \M^n \ni (s,x) \mapsto Q_s \varphi(x) $ is the (Lipschitz and compactly-supported) solution of the Hopf-Lax problem \eqref{eq:hopf-lax}. To this aim, for (almost) every $s \in (0,1) $  we also introduce the solution $ w^s $ of the linearized equation \eqref{linear} starting from $ \tfrac{\d}{\d s} \rho_0^s$: 
\begin{equation}\label{pme-approx-s-deriv}
\begin{cases}
 \partial_t w^s = \Delta \!\left[ P_\varepsilon^\prime \!\left(\rho_\varepsilon^s \right) w^s \right] & \text{in } \mathbb{M}^n \times \mathbb{R}^+ \, , \\
w^s(0)= \tfrac{\d}{\d s} \rho^s_0 & \text{on } \mathbb{M}^n \times \{ 0 \} \, .
\end{cases}
\end{equation} 
Thanks to Theorem \ref{pb:dual} and Remark \ref{rr}, if $ \{ \rho^s_0 \}_{s \in [0,1]} $ is a regular curve we can guarantee that \eqref{pme-approx-s-deriv} admits a weak solution, at least for almost every $ s \in (0,1) $. Moreover, \cite[Theorem 4.6]{AMS} ensures that $ w^s(t) = \tfrac{\d}{\d s} \rho^s_\varepsilon(t) $.
 \eqref{pme-approx-s-weak-formulation} with initial datum $w_0=\Delta P(\rho_0)$.

\begin{lemma}\label{formula-dualita}
Let $P$ comply with \eqref{increas} and $P_\varepsilon$ ($ \varepsilon>0 $) be defined by \eqref{P-app}. Given a regular curve $ \{ \rho^s_0 \}_{s \in [0,1]} $ and $ T>0 $, let $ \rho_{\varepsilon}^s\in \mathcal{ND}_{P_\varepsilon}(0,T)$ be the corresponding (weak energy) solution of \eqref{pme-approx-s}. Then, for every $  \varphi \in \mathrm{Lip}_c(\mathbb{M}^n) $ and every $ t \in (0,T) $, the map $ s  \mapsto \int_{\mathbb{M}^n} Q_s \varphi \, \rho_\varepsilon^s(t) \, \d\mathcal{V} $ is Lipschitz continuous in $ [0,1] $ and satisfies 
\begin{equation}\label{eq:deriv-Qs}
\frac{\d}{\d s} \int_{\mathbb{M}^n} Q_s \varphi \, \rho_\varepsilon^s(t) \, \d\mathcal{V} = -\frac{1}{2} \, \int_{\mathbb{M}^n} \Gamma\!\left( Q_s \varphi \right) \rho_\varepsilon^s(t) \, \d\mathcal{V} + \leftidx{_{\mathbb{V}^\prime}}{\!\left\langle w^s(t),Q_s \varphi \right\rangle}_{\mathbb{V}} \qquad \text{for a.e. }s \in (0,1) \, ,
\end{equation}
where $ (s,x) \mapsto Q_s \varphi(x) $ is the (Lipschitz and compactly-supported) solution of the Hopf-Lax problem \eqref{eq:hopf-lax} and $ w^s(t) = \tfrac{\d}{\d s} \rho^s_\varepsilon(t) $ is the weak solution of \eqref{pme-approx-s-deriv} provided by Theorem \ref{pb:dual}. 

Moreover, if we denote by $ r : (0,t) \mapsto \phi^s(r) $ the solution of the backward adjoint problem \eqref{adjoint} corresponding to $ \rho \equiv \rho_\varepsilon^s $ with final condition $ \phi^s(t) = Q_s \varphi $, provided by Theorem \ref{th: dual-back}, the following identities hold:
\begin{equation}\label{eq:dual-id}
\leftidx{_{\mathbb{V}^\prime}}{\!\left\langle w^s(t),Q_s \varphi \right\rangle}_{\mathbb{V}} = \leftidx{_{\mathbb{V}^\prime}}{\!\left\langle w^s(t) , \phi^s(t) \right\rangle}_{\mathbb{V}} = \leftidx{_{\mathbb{V}^\prime}}{\!\left\langle w^s(0) , \phi^s(0) \right\rangle}_{\mathbb{V}} = \leftidx{_{\mathbb{V}^\prime}}{\!\left\langle \tfrac{\d}{\d s} \rho^s_0 , \phi^s(0) \right\rangle}_{\mathbb{V}} \quad \text{for a.e. } s \in (0,1) \,  .
\end{equation}
\end{lemma}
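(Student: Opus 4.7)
The plan is to differentiate the map $s \mapsto F(s) := \int_{\M^n} Q_s\varphi \, \rho_\varepsilon^s(t) \, \d\mathcal{V}$ by separating the two sources of $s$-dependence. For the Lipschitz regularity, I would write
$$
F(s)-F(s') = \int_{\M^n}\!\left(Q_s\varphi-Q_{s'}\varphi\right)\rho_\varepsilon^s(t)\,\d\mathcal V + \int_{\M^n} Q_{s'}\varphi \left(\rho_\varepsilon^s(t)-\rho_\varepsilon^{s'}(t)\right) \d\mathcal V,
$$
controlling the first term via the uniform Lipschitz bound on $(s,x)\mapsto Q_s\varphi$ (granted by \eqref{eq:hopf-lax} together with the compact support of $\varphi$) and $\|\rho_\varepsilon^s(t)\|_{L^1}\le M$, and the second term via the $L^1$-contraction \eqref{L1-est} applied to the two solutions of \eqref{pme-approx-s} starting from $\rho_0^s$ and $\rho_0^{s'}$, combined with the Lipschitz continuity of $s\mapsto \rho_0^s$ in $L^1(\M^n)$ (which follows from items (i) and (ii) of Definition \ref{regcurve} and interpolation between the $L^\infty$ bound and the $\cW_1 \le \cW_2$ inequality \eqref{order-wass}).

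Having established absolute continuity, I would compute the derivative by the chain rule. The Hopf--Lax equation \eqref{eq:hopf-lax} yields $\partial_s Q_s\varphi = -\tfrac12\,\Gamma(Q_s\varphi)$ for a.e.\ $(s,x)$, so passing the $s$-derivative inside the first integral (which is legitimate since $Q_s\varphi$ is uniformly Lipschitz and uniformly compactly supported while $\rho_\varepsilon^s(t)\in L^1(\M^n)$ uniformly in $s$) produces the first term in \eqref{eq:deriv-Qs}. For the second contribution, I would invoke \cite[Theorem~4.6]{AMS} (cited right before the lemma), which identifies $\tfrac{\d}{\d s}\rho_\varepsilon^s(t)$ with the solution $w^s(t)$ of \eqref{pme-approx-s-deriv} in the $\mathbb{V}'$-sense; since $Q_s\varphi\in\mathrm{Lip}_c(\M^n)\subset\mathbb{V}$, the pairing $\langle w^s(t), Q_s\varphi\rangle_{\mathbb V',\mathbb V}$ is well defined and yields the second term of \eqref{eq:deriv-Qs}.

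For the chain of identities \eqref{eq:dual-id}, the rightmost equality is simply the initial condition $w^s(0)=\tfrac{\d}{\d s}\rho_0^s$, and the first equality is the final condition $\phi^s(t)=Q_s\varphi$. The middle identity $\langle w^s(t),\phi^s(t)\rangle = \langle w^s(0),\phi^s(0)\rangle$ is the duality between \eqref{linear} and \eqref{adjoint}: I would apply the weak formulation \eqref{pme-approx-s-weak-formulation} of Theorem \ref{pb:dual} with the choice $\theta=\phi^s$, which is admissible since $\phi^s\in W^{1,2}((0,T);\mathbb D,\mathbb H)$ by Theorem \ref{th: dual-back}. This gives
$$
\leftidx{_{\V'}}{\langle w^s(t),\phi^s(t)\rangle}_{\V} - \int_0^t\!\!\int_{\M^n}\!\bigl[\partial_r\phi^s + P'_\varepsilon(\rho_\varepsilon^s)\Delta\phi^s\bigr] w^s \,\d\mathcal V\,\d r = \leftidx{_{\V'}}{\langle w^s(0),\phi^s(0)\rangle}_{\V},
$$
and the bracketed integrand vanishes identically because $\phi^s$ solves \eqref{adjoint} with $\rho\equiv\rho_\varepsilon^s$.

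The main obstacle I anticipate is the rigorous justification of the differentiation under the integral sign in the first contribution to $F'(s)$: $Q_s\varphi$ is Lipschitz but not $C^1$ in $s$, so $\partial_s Q_s\varphi$ is only an a.e.\ object, and one must argue via Fubini and the absolute continuity of $s\mapsto F(s)$ rather than through a pointwise chain rule. The Lipschitz bound on $s\mapsto \rho_0^s$ in $L^1(\M^n)$ needed for the second term is also slightly delicate since Definition \ref{regcurve} only provides Lipschitz regularity in $(\mathscr{M}_2^M,\cW_2)$; however, the uniform $L^\infty$ bound $\|\rho_0^s\|_\infty\le R$ lets one upgrade narrow/Wasserstein control to an $L^1$ bound after a standard cut-off and truncation argument.
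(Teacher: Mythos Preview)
Your overall structure---split $F(s)-F(s')$ into a $Q_s\varphi$-variation term and a $\rho_\varepsilon^s(t)$-variation term, then identify the derivative via \eqref{eq:hopf-lax} and $w^s(t)=\tfrac{\d}{\d s}\rho_\varepsilon^s(t)$, and finally read off \eqref{eq:dual-id} from the weak formulation \eqref{pme-approx-s-weak-formulation} with $\theta=\phi^s$---is exactly the paper's argument, and the treatment of \eqref{eq:deriv-Qs} and \eqref{eq:dual-id} is correct.

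There is, however, a genuine gap in your Lipschitz argument for the second term. You claim that $s\mapsto\rho_0^s$ is Lipschitz in $L^1(\M^n)$, arguing that the uniform $L^\infty$ bound together with $\cW_1\le\cW_2$ upgrades Wasserstein control to $L^1$ control. This is false: take for instance a family of uniformly bounded oscillatory densities like (suitably normalized) $1+\sin(nx)$ on a fixed interval; the $\cW_2$ distance to the flat density is of order $1/n$ while the $L^1$ distance stays bounded away from zero. Uniform $L^\infty$ bounds alone do not suppress this phenomenon, and no ``standard cut-off and truncation argument'' rescues the claim. At best, invoking also the uniform $W^{1,2}$ bound on $\sqrt{\rho^s}$ from item (iii) would yield some H\"older estimate via interpolation, not a Lipschitz one.

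The paper avoids this entirely by working in $\mathbb{V}'$ rather than $L^1$: Remark~\ref{rr} (via \cite[Lemma~8.1]{AMS}) gives directly that a regular curve is Lipschitz as a map $[0,1]\to\mathbb{V}'$, and the nonlinear semigroup generated by \eqref{pme-approx-s} is a contraction for $\|\cdot\|_{\mathbb{V}'}$. Since $Q_{s'}\varphi\in\mathrm{Lip}_c(\M^n)\subset\mathbb{V}$ with $\|Q_{s'}\varphi\|_{\mathbb{V}}$ bounded uniformly in $s'$, the second term in your decomposition equals the pairing $\leftidx{_{\mathbb{V}'}}{\langle \rho_\varepsilon^s(t)-\rho_\varepsilon^{s'}(t),\,Q_{s'}\varphi\rangle}_{\mathbb{V}}$ and is immediately Lipschitz in $s$. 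Replacing your $L^1$ step by this $\mathbb{V}'$ argument makes the proof complete.
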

\begin{proof}
The map $ s  \mapsto \int_{\mathbb{M}^n} Q_s \varphi \, \rho_\varepsilon^s(t) \, \d\mathcal{V} $ is Lipschitz continuous by virtue of the Lipschitz-continuity of $ (s,x) \mapsto Q_s \varphi (x) $ (plus the boundedness of its support) and the Lipschitz-continuity of the curve $ s \mapsto \rho^s_0 $ with values in $ \V' $ (recall Remark \ref{rr}) along with the fact that the semigroup generated by \eqref{pme-approx-s} turns out to be also a contraction with respect to $ \| \cdot \|_{\V'} $. For more details we refer the reader to \cite[Proof of Theorem 12.5]{AMS}. Once we have observed this, identity \eqref{eq:deriv-Qs} is a direct consequence of \eqref{eq:hopf-lax} and the equality $ w^s(t) = \tfrac{\d}{\d s} \rho^s_\varepsilon(t) $ (for a.e.~$ s \in (0,1) $ independently of $t$), which can rigorously be proved by proceeding as in \cite[Theorem 4.6]{AMS}.

As concerns \eqref{eq:dual-id}, it is enough to observe that it is nothing but formula \eqref{pme-approx-s-weak-formulation} with $ \rho \equiv \rho^s_\varepsilon $, $ w \equiv w^s $ and $ \theta \equiv \phi^s $ (actually with $ r $ and $t$ interchanged).
\end{proof}

\begin{lemma}\label{l1}
Let $\mathbb{M}^n$ ($ n \ge 3 $) comply with assumptions \eqref{ricci-K} and \eqref{Sob}. Let moreover $ P $ comply with assumptions \eqref{increas}, \eqref{below-above-prime}, \eqref{fund-ineq} and $P_\varepsilon $ ($ \varepsilon>0 $) be defined by \eqref{P-app}. Let $ \rho^{0}_{\varepsilon} $ and $ \rho_{\varepsilon}^{1} $ be any two (weak energy) solutions of \eqref{pme-approx} corresponding to the initial data $ \rho^0_0 $ and $ \rho^1_0$, respectively, both nonnegative, belonging to $ {L}^\infty_c(\mathbb{M}^n) \cap W^{1,2}(\mathbb{M}^n)  $ and having the same mass $ M>0 $. Suppose that $ \{ \rho^s_0 \}_{s \in [0,1]} $ is any regular curve (in the sense of Definition \ref{regcurve}) connecting $ \rho^0_0 $ with $ \rho^1_0$, which satisfies
\begin{equation}\label{hp-eps}
\left\| \rho^s_0 \right\|_{L^\infty(\mathbb{M}^n)} \le \frac 1 \varepsilon \qquad \forall s \in [0,1] \, .
\end{equation}
Then for every $  \varphi \in \mathrm{Lip}_c(\mathbb{M}^n) $ there holds
\begin{equation}\label{p4}
\begin{aligned}
& \, \int_{\mathbb{M}^n} Q_1 \varphi \, \rho_\varepsilon^1(t) \, \d\mathcal{V} - \int_{\mathbb{M}^n} \varphi \, \rho_\varepsilon^0(t) \, \d\mathcal{V} \\
\le & \, \frac12 \exp\left\{ 2K \, c_1 \, \mathfrak{C}_{m} \left[ \left(tM^{m-1}\right)^{\frac{2}{2+n(m-1)}}  \vee \left(tM^{m-1}\right) + \frac{\varepsilon}{c_1 \mathfrak{C}_m} t \right]  \right\} \int_0^1 \left| \dot{\mu}^s \right|^2 \d s  \, ,
\end{aligned}
\end{equation}
where $ \mu^s := \rho_0^s \, \mathcal{V}  $ and $ \{ Q_s \varphi \}_{s \in [0,1]} $ is the (Lipschitz and compactly-supported) solution of the Hopf-Lax problem \eqref{eq:hopf-lax} and the constant $ \mathfrak{C}_m $ is defined in \eqref{def-cm}.
\end{lemma}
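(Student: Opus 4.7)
The plan is to chain together Lemma~\ref{formula-dualita}, Lemma~\ref{deriv-ham-chiusa}, a scaled Fenchel--Young inequality associated with the weighted Dirichlet energy, and the key identity~\eqref{eq:key-id-E}. First, fix $\varphi \in \mathrm{Lip}_c(\mathbb{M}^n)$ and, for each $s \in [0,1]$, let $\phi^s$ be the strong solution on $(0,t)$ of the backward adjoint problem~\eqref{adjoint} with coefficient $\rho \equiv \rho_\varepsilon^s$ and terminal datum $\phi^s(t) = Q_s\varphi$; this is admissible by Theorem~\ref{th: dual-back}, since $Q_s\varphi \in \mathbb{V} \cap L^\infty(\mathbb{M}^n)$. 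Integrating the identity~\eqref{eq:deriv-Qs} of Lemma~\ref{formula-dualita} over $[0,1]$ and collapsing the pairing via the duality chain~\eqref{eq:dual-id} yields the representation
$$
\int_{\mathbb{M}^n} Q_1\varphi\, \rho_\varepsilon^1(t)\, \d\mathcal{V} - \int_{\mathbb{M}^n} \varphi\, \rho_\varepsilon^0(t)\, \d\mathcal{V} = \int_0^1 \left[ -\tfrac{1}{2}\, \mathcal{E}_{\rho_\varepsilon^s(t)}[\phi^s(t)] + \leftidx{_{\mathbb{V}'}}{\!\left\langle \tfrac{\d}{\d s}\rho_0^s,\, \phi^s(0)\right\rangle}_{\mathbb{V}}\right] \d s .
$$

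Next, I would observe that the regular-curve conditions of Definition~\ref{regcurve} force $\rho_0^s \in L^1(\mathbb{M}^n)\cap L^\infty(\mathbb{M}^n)\cap W^{1,2}(\mathbb{M}^n)$ uniformly in $s$ (the $W^{1,2}$ membership follows from $\sqrt{\rho_0^s}\in\mathbb{V}$ combined with the uniform $L^\infty$ bound), so that, together with~\eqref{hp-eps}, the hypotheses of Lemma~\ref{deriv-ham-chiusa} are satisfied. Writing $\kappa_\varepsilon(t)$ for the quantity inside the braces in~\eqref{int-ham-chiusa}, this yields the pointwise-in-$s$ lower bound $\mathcal{E}_{\rho_\varepsilon^s(t)}[\phi^s(t)] \ge e^{-\kappa_\varepsilon(t)}\, \mathcal{E}_{\rho_0^s}[\phi^s(0)]$ for every $s \in [0,1]$.

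The decisive step is a scaled Fenchel--Young inequality for the $2$-homogeneous Fenchel pair $\tfrac{1}{2}\mathcal{E}_\rho$, $\tfrac{1}{2}\mathcal{E}_\rho^\ast$: for any $\lambda>0$ and any $(f,\ell)\in\mathbb{V}\times\mathbb{V}'$,
$$
\leftidx{_{\mathbb{V}'}}{\!\left\langle \ell,\, f\right\rangle}_{\mathbb{V}} \le \frac{\lambda}{2}\,\mathcal{E}_\rho[f] + \frac{1}{2\lambda}\,\mathcal{E}_\rho^\ast[\ell] .
$$
Applied with $\rho = \rho_0^s$, $f = \phi^s(0)$, $\ell = \tfrac{\d}{\d s}\rho_0^s$ and $\lambda = e^{-\kappa_\varepsilon(t)}$, and combined with the Hamiltonian lower bound just obtained, the two contributions proportional to $\mathcal{E}_{\rho_0^s}[\phi^s(0)]$ cancel exactly, so that the integrand is pointwise dominated by $\tfrac{1}{2}\, e^{\kappa_\varepsilon(t)}\, \mathcal{E}_{\rho_0^s}^\ast[\tfrac{\d}{\d s}\rho_0^s]$. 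Integrating over $[0,1]$ and invoking~\eqref{eq:key-id-E} to identify $\int_0^1 \mathcal{E}_{\rho_0^s}^\ast[\tfrac{\d}{\d s}\rho_0^s]\,\d s = \int_0^1 |\dot\mu^s|^2\,\d s$ then delivers~\eqref{p4} with the precise exponential prefactor.

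The main technical obstacle lies in the Fenchel--Young step: one must justify the $\mathbb{V}'$--$\mathbb{V}$ duality bracket appearing in~\eqref{eq:dual-id} as the pairing implicit in the definition~\eqref{eq:dual weighted Dirichlet} of $\mathcal{E}_\rho^\ast$ (this uses $\phi^s(0)\in\mathbb{V}$, which follows from $\phi^s\in W^{1,2}((0,t);\mathbb{D},\mathbb{H})\hookrightarrow C([0,t];\mathbb{V})$), and one must verify the $s$-measurability and integrability of the integrand, in particular of $s\mapsto\phi^s(0)$ and $s \mapsto \mathcal{E}_{\rho_0^s}[\phi^s(0)]$. Measurability follows from the continuous dependence of the linear parabolic problem~\eqref{adjoint} on its coefficient $\rho_\varepsilon^s$ and its terminal datum $Q_s\varphi$, both of which vary continuously in $s$.
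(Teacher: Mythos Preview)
Your proposal is correct and follows essentially the same route as the paper: you combine Lemma~\ref{formula-dualita} with Lemma~\ref{deriv-ham-chiusa}, absorb the exponential factor via a Fenchel--Young/rescaling step, and conclude through~\eqref{eq:key-id-E}. The only cosmetic difference is that the paper performs the ``scaled Fenchel--Young'' step by explicitly introducing the rescaled function $\psi^{s,t}=e^{-\kappa_\varepsilon(t)}\phi^s(0)$ and then invoking the definition of $\mathcal{E}^\ast_{\rho_0^s}$, which is the same computation written in a different order.
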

\begin{proof}
We follow the line of proof of \cite[Theorem 12.5]{AMS}, keeping the same notations as in Lemma \ref{formula-dualita}. By combining \eqref{eq:deriv-Qs} and \eqref{eq:dual-id}, we obtain:
\begin{equation*}\label{p1}
\begin{aligned} 
\int_{\mathbb{M}^n} Q_1 \varphi \, \rho_\varepsilon^1(t) \, \d\mathcal{V} - \int_{\mathbb{M}^n} \varphi \, \rho_\varepsilon^0(t) \, \d\mathcal{V} = & \int_0^1  \left( -\frac12 \int_{\mathbb{M}^n} \Gamma\!\left( \phi^s(t) \right) \rho_\varepsilon^s(t) \, \d\mathcal{V} + \leftidx{_{\mathbb{V}^\prime}}{\!\left\langle \tfrac{\d}{\d s} \rho^s_0 , \phi^s(0) \right\rangle}_{\mathbb{V}} \right) \d s \\
= & \int_0^1 \left(  -\frac12 \, \mathcal{E}_{\rho^s_\varepsilon(t)} \! \left[ \phi^s(t) \right] + \leftidx{_{\mathbb{V}^\prime}}{\!\left\langle \tfrac{\d}{\d s} \rho^s_0 , \phi^s(0) \right\rangle}_{\mathbb{V}}\right) \d s \, .
\end{aligned}
\end{equation*}
Now we can apply, at every $s \in [0,1]$, estimate \eqref{int-ham-chiusa} from Lemma \ref{deriv-ham-chiusa} with $ \rho_\varepsilon(t) \equiv \rho_\varepsilon^s(t) $ and $ \phi(t) \equiv \phi^s(t) $, under assumption \eqref{hp-eps}. This yields, upon recalling \eqref{eq:dual weighted Dirichlet}, 
\begin{equation}\label{p2}
\begin{aligned}
& \int_{\mathbb{M}^n} Q_1 \varphi \, \rho_\varepsilon^1(t) \, \d\mathcal{V} - \int_{\mathbb{M}^n} \varphi \, \rho_\varepsilon^0(t) \, \d\mathcal{V} \\
\le & \bigintsss_0^1 \left( - \frac12 \, e^{-2K \, c_1 \, \mathfrak{C}_{m} \left[ \left(tM^{m-1}\right)^{\frac{2}{2+n(m-1)}}  \vee \left(tM^{m-1}\right) + \frac{\varepsilon}{c_1 \mathfrak{C}_m} t \right]} \mathcal{E}_{\rho^s_0} \! \left[ \phi^s(0) \right] + \leftidx{_{\mathbb{V}^\prime}}{\!\left\langle \tfrac{\d}{\d s} \rho^s_0 , \phi^s(0) \right\rangle}_{\mathbb{V}} \right) \d s \\
= & \, e^{2K \, c_1 \, \mathfrak{C}_{m} \left[ \left(tM^{m-1}\right)^{\frac{2}{2+n(m-1)}}  \vee \left(tM^{m-1}\right) + \frac{\varepsilon}{c_1 \mathfrak{C}_m} t \right]} \int_0^1 \left( -\frac12 \, \mathcal{E}_{\rho^s_0} \! \left[ \psi^{s,t} \right] +\leftidx{_{\mathbb{V}^\prime}}{\!\left\langle \tfrac{\d}{\d s} \rho^s_0 , \psi^{s,t} \right\rangle}_{\mathbb{V}} \right) \d s \\
\le & \, e^{2K \, c_1 \, \mathfrak{C}_{m} \left[ \left(tM^{m-1}\right)^{\frac{2}{2+n(m-1)}}  \vee \left(tM^{m-1}\right) + \frac{\varepsilon}{c_1 \mathfrak{C}_m} t \right]} \int_0^1 \frac12 \, \mathcal{E}_{\rho_0^s}^\ast \! \left[ \tfrac{\d}{\d s} \rho^s_0 \right] \d s \, ,
\end{aligned}
\end{equation}  
where we have set
$$
\psi^{s,t} := e^{-2K \, c_1 \, \mathfrak{C}_{m} \left[ \left(tM^{m-1}\right)^{\frac{2}{2+n(m-1)}}  \vee \left(tM^{m-1}\right) + \frac{\varepsilon}{c_1 \mathfrak{C}_m} t \right]} \, \phi^{s}(0) \, . 
$$ 
Estimate \eqref{p4} thus follows from \eqref{p2} in view of \eqref{eq:key-id-E}.
\end{proof} 

In order to prove Theorem \ref{main-result}, we need first to approximate the geodesic connecting $ \mu_0 $ and $ \hat{\mu}_0 $ in $ (\mathscr{M}_2^M(\M^n),\mathcal{W}_2) $ by regular curves, let $ \varepsilon \to 0 $ in \eqref{pme-approx} and finally pass to the limit in the approximation of the  measures $ \mu_0 $ and $ \hat{\mu}_0 $ by bounded and compactly supported densities as in Lemma \ref{l1}.

\begin{proof}[{Proof of Theorem \ref{main-result} (noncompact case)}]
To begin with, we suppose that $ \mu_0 = \rho_0 \mathcal{V} $ and $ \hat{\mu}_0 = \hat{\rho}_0 \mathcal{V} $, where $ \rho_0 $ and $ \hat{\rho}_0 $ are initial data complying with the assumptions of Lemma \ref{l1}: we will remove this hypothesis only at the very end of the proof. By virtue of Lemma \ref{lemma12-2ams}, we know that there exists a sequence of regular curves $ \{ \rho_j^s \}_{j\in\mathbb{N},s\in[0,1]} $ satisfying \eqref{eq:est-lemma-app-A}--\eqref{eq:improve2} (let $ \rho^1 = \rho_0 $ and $ \rho^0 = \hat{\rho}_0 $ according to the corresponding notations). Given $ \varepsilon>0 $, if we denote by $ t \mapsto (\rho_j^s)_\varepsilon(t) $ each weak energy solution of \eqref{pme-approx} starting from $ \rho_0 \equiv \rho_j^s $, then by Lemma \ref{l1} we know that 
 \begin{equation}\label{p4-1}
 \begin{aligned}
& \, \int_{\mathbb{M}^n} Q_1 \varphi \, (\rho^1_j)_\varepsilon(t) \, \d\mathcal{V} - \int_{\mathbb{M}^n} \varphi \, (\rho^0_j)_\varepsilon(t) \, \d\mathcal{V} \\
\le & \, \frac12 \exp\left\{ 2K \, c_1 \, \mathfrak{C}_{m} \left[ \left(tM^{m-1}\right)^{\frac{2}{2+n(m-1)}}  \vee \left(tM^{m-1}\right) + \frac{\varepsilon}{c_1 \mathfrak{C}_m} t \right]  \right\} \int_0^1 \left| \dot{\mu}^s_j \right|^2 \d s
\end{aligned}
\end{equation}
for every $ \varphi \in \mathrm{Lip}_c(\mathbb{M}^n) $, provided 
\begin{equation}\label{hp-ep-n}
\left\| \rho^s_j \right\|_{L^\infty(\mathbb{M}^n)} \le \frac 1 \varepsilon \qquad \forall s \in [0,1] \, .
\end{equation}
Let us pass to the limit in \eqref{p4-1} as $ j\to\infty $. In the sequel, we denote by $ \rho_\varepsilon $ and $ \hat{\rho}_\varepsilon $ the weak energy solutions of \eqref{pme-approx} starting from $ \rho_0 $ and $ \hat{\rho}_0 $, respectively. Thanks to \eqref{eq:est-lemma-app-B}, \eqref{eq:improve1} (with $ p=1 $) and the $ L^1 $-contraction property \eqref{L1-est} of weak energy solutions, which guarantees that $ (\rho_j^0)_\varepsilon(t) \to \rho_\varepsilon(t) $ and $ (\rho_j^1)_\varepsilon(t) \to \hat{\rho}_\varepsilon(t) $ in $ L^1(\M^n) $, we deduce that
\begin{equation}\label{p4-2}
\begin{aligned}
& \, \int_{\mathbb{M}^n} Q_1 \varphi \, \rho_\varepsilon(t) \, \d\mathcal{V} - \int_{\mathbb{M}^n} \varphi \, \hat{\rho}_\varepsilon(t) \, \d\mathcal{V} \\
\le & \, \frac12 \exp\left\{ 2K  \, c_1 \, \mathfrak{C}_{m} \left[ \left(tM^{m-1}\right)^{\frac{2}{2+n(m-1)}}  \vee \left(tM^{m-1}\right) + \frac{\varepsilon}{c_1 \mathfrak{C}_m} t \right]  \right\} \mathcal{W}_2^2 \! \left( \rho_0 , \hat{\rho}_0 \right) 
\end{aligned}
\end{equation}
upon requiring 
 \begin{equation*}\label{p4-3}
\limsup_{j \to \infty} \sup_{s \in [0,1]} \left\| \rho^s_j \right\|_{{L}^\infty(\mathbb{M}^n)} \leq \frac{1}{2\varepsilon}
\end{equation*}
in view of \eqref{hp-ep-n}, which holds for $ \varepsilon $ small enough thanks to \eqref{eq:improve2}. We are now in position to let $ \varepsilon \downarrow 0 $. The r.h.s.~of \eqref{p4-2} is clearly stable as $ \varepsilon \downarrow 0 $. In order to pass to the limit in the l.h.s.~we need to exploit Proposition \ref{exunimain}: in particular, formula \eqref{conv-epsilon-bis} ensures that $\{ \rho_\varepsilon(t) \}_{\varepsilon>0}$ and $ \{ \hat{\rho}_\varepsilon(t) \}_{\varepsilon>0} $ converge in $ L^1(\mathbb{M}^n) $ to $ \rho(t) $ and $ \hat{\rho}(t) $, respectively, so that \eqref{p4-2} yields
\begin{equation}\label{F1}
\int_{\mathbb{M}^n} Q_1 \varphi \, \rho(t) \, \d\mathcal{V} - \int_{\mathbb{M}^n} \varphi \, \hat{\rho}(t) \, \d\mathcal{V}
\le \frac12 \exp\left\{ 2K \, c_1 \, \mathfrak{C}_{m} \left[ \left(tM^{m-1}\right)^{\frac{2}{2+n(m-1)}}  \vee \left(tM^{m-1}\right) \right]  \right\} \mathcal{W}_2^2 \! \left( \rho_0 , \hat{\rho}_0 \right) .
\end{equation} 
If we take the supremum of the l.h.s.~of \eqref{F1} over all $ \varphi \in \mathrm{Lip}_c(\mathbb{M}^n) $, then by virtue of Proposition \ref{prop: hopf-lax duality} we obtain
\begin{equation}\label{p4-2-ter}
\mathcal{W}_2 \! \left( \rho(t) , \hat{\rho}(t) \right) \le \exp\left\{K \, c_1 \, \mathfrak{C}_{m} \left[ \left(tM^{m-1}\right)^{\frac{2}{2+n(m-1)}}  \vee \left(tM^{m-1}\right) \right]  \right\} \mathcal{W}_2 \! \left( \rho_0 , \hat{\rho}_0 \right) \qquad \forall t>0 \, ,
\end{equation}
namely \eqref{wass-contr} restricted to initial data $ \rho_0 , \hat{\rho}_0 \in L^\infty_c(\mathbb{M}^n) \cap W^{1,2}(\mathbb{M}^n)$. 
It is apparent that estimate \eqref{p4-2-ter} remains true in the wider class $ \rho_0,\hat{\rho}_0 \in L^1(\mathbb{M}^n) \cap L^\infty(\mathbb{M}^n) \cap \mathscr{M}_2^M $: indeed by local regularization and a standard truncation argument, one can pick sequences of nonnegative initial data of mass $ M $ belonging to $ {L}^\infty_c(\mathbb{M}^n) \cap W^{1,2}(\mathbb{M}^n)  $ which converge to $ \rho_0 $ and $\hat{\rho}_0$, respectively, both in $ L^1(\mathbb{M}^n) $ and in $ (\mathscr{M}_2^M(\M^n) ,\mathcal{W}_2) $ (recall Proposition \ref{wass-conv}). Thanks again to \eqref{L1-est}, i.e.~the stability of solutions in $ L^1(\M^n) $, this suffices to pass to the limit in \eqref{F1} and hence in \eqref{p4-2-ter}. 

We still have to prove that $ \rho(t) \in \mathscr{M}_2^M(\M^n) $ for all $ t>0 $, since the mass-conservation property \eqref{cons-mass-th} only ensures that $ \rho(t) \in \mathscr{M}^M(\M^n) $. To this aim, we take advantage of Proposition \ref{compact-support}: from the latter we know that if $ \rho_0 \in L^\infty_c(\mathbb{M}^n) $ then the weak energy solution $ \rho(t) $ of \eqref{pme-reg} stays (uniformly) bounded with (uniform) compact support in a suitable time interval $ [0,t_1] $, so that in particular $ \rho(t) \in \mathscr{M}_2^M(\M^n) $ for all $ t \in [0,t_1] $. Let $ \tau \in (0,t_1] $. Since $ \{ \rho(t+\tau) \}_{t \ge 0} $ is the weak energy solution of \eqref{pme-reg} starting from $ \rho(\tau) \in L^1(\mathbb{M}^n) \cap L^\infty(\mathbb{M}^n) \cap \mathscr{M}_2^M(\M^n) $, estimate \eqref{p4-2-ter} applied to $ \hat{\rho}(t)=\rho(t+\tau) $ guarantees that 
\begin{equation}\label{wass-finite}
\mathcal{W}_2 \! \left( \rho(t), \rho(t+\tau) \right) \le \exp\left\{ K \, c_1 \, \mathfrak{C}_{m} \left[ \left(tM^{m-1}\right)^{\frac{2}{2+n(m-1)}}  \vee \left(tM^{m-1}\right) \right]  \right\} \mathcal{W}_2 \! \left( \rho_0 , \rho(\tau) \right) < \infty  \quad \forall t >0 \, ,
\end{equation} 
whence $ \rho(t) \in \mathscr{M}_2^M(\M^n) $ also for all $ t \in (t_1,2t_1] $ upon recalling \eqref{elem}. It is then clear how one can set up an induction procedure to establish that in fact $ \rho(t) \in \mathscr{M}_2^M(\M^n) $ for \emph{all} $ t>0 $. Furthermore, $ \rho \in C([0,+\infty);\mathscr{M}_2^M(\mathbb{M}^n)) $. Indeed, the just mentioned property of compactness of the support for short times and the $ L^1$-continuity ensured by Proposition \ref{exunimain} easily imply, along with Proposition \ref{wass-conv}, that 
\begin{equation}\label{c-0}
\lim_{t \downarrow 0} \mathcal{W}_2(\rho(t),\rho_0) = 0 \, .
\end{equation} 
Hence by combining \eqref{wass-finite} (understood for all $ t,\tau>0 $) and \eqref{c-0}, we deduce that for every $ t_0 > 0 $ there holds
\begin{equation*}\label{c-t0}
\begin{aligned}
& \lim_{t \to t_0} \mathcal{W}_2(\rho(t),\rho(t_0)) \\
\le & \, \exp\left\{ K \, c_1 \, \mathfrak{C}_{m} \left[ \left(t_0M^{m-1}\right)^{\frac{2}{2+n(m-1)}}  \vee \left(t_0M^{m-1}\right) \right]  \right\} \lim_{t \to t_0} \mathcal{W}_2 \! \left(\rho(|t-t_0|) , \rho_0 \right)  = 0 \, .
\end{aligned}
\end{equation*} 
We have therefore shown the validity of Theorem \ref{main-result} under the additional assumptions $ \mu_0 = \rho_0 \mathcal{V} $ and $ \hat{\mu}_0 = \hat{\rho}_0 \mathcal{V} $ with $ \rho_0 , \hat{\rho}_0 \in L^\infty_c(\mathbb{M}^n) \cap W^{1,2}(\mathbb{M}^n) $. In order to be able to deal with general initial data as in the statement, first of all we take a sequence of nonnegative functions $ (\rho_{j,0},\hat{\rho}_{j,0}) \in [ {L}^\infty_c(\mathbb{M}^n) \cap W^{1,2}(\mathbb{M}^n) ]^2 $ of mass $ M $ such that 
\begin{equation}\label{approx-j}
\lim_{j \to \infty} \rho_{j,0} = \mu_0 \qquad \text{and} \qquad   \lim_{j \to \infty} \hat{\rho}_{j,0} = \hat{\mu}_0  \qquad \text{in } \left( \mathscr{M}_2^M(\M^n) , \mathcal{W}_2 \right) ,
\end{equation}
which exists as a consequence of Definition \ref{regcurve}, Remark \ref{rr} and Lemma \ref{lemma12-2ams} (only applied at the endpoints $ s=0 , 1$): the additional property of the compactness of the support can be obtained again by a straightforward truncation argument. Estimate \eqref{p4-2-ter} applied to the corresponding sequences of solutions, which we denote by $ \{ (\rho_j , \hat{\rho}_j) \}_{j \in \mathbb{N}} $, yields
\begin{equation}\label{wass-contr-j}
\begin{gathered}
\mathcal{W}_2 \!\left(\rho_j(t),\rho_i(t) \right) \leq \exp\!\left\{ K \, c_1 \, \mathfrak{C}_{m} \left[ \left(tM^{m-1}\right)^{\frac{2}{2+n(m-1)}}  \vee \left(tM^{m-1}\right) \right] \right\} \mathcal{W}_2\! \left( \rho_{j,0} , \rho_{i,0} \right) ,  \\
\mathcal{W}_2 \!\left(\hat{\rho}_j(t),\hat{\rho}_i(t) \right) \leq \exp\!\left\{ K \, c_1 \, \mathfrak{C}_{m} \left[ \left(tM^{m-1}\right)^{\frac{2}{2+n(m-1)}}  \vee \left(tM^{m-1}\right) \right] \right\} \mathcal{W}_2\! \left( \hat{\rho}_{j,0} , \hat{\rho}_{i,0} \right) ,
\end{gathered}
\end{equation}
for every $ t>0 $ and $ i,j \in \mathbb{N} $, whereas the smoothing effect \eqref{smoothing-limit} ensures that 
\begin{equation}\label{smoothing-j}
\left\| \rho_{j}(t) \right\|_{L^\infty\left(\mathbb{M}^n\right)} \vee \left\| \hat{\rho}_{j}(t) \right\|_{L^\infty\left(\mathbb{M}^n\right)} \le C \left( t^{-\frac{n}{2+n(m-1)}} M^{\frac{2}{2+n(m-1)}} +M \right) \qquad \forall t > 0 \, , \ \forall j \in \mathbb{N} \, .
\end{equation}
From \eqref{approx-j} and \eqref{wass-contr-j} we infer that $ \{ \rho_j \}_{j \in \mathbb{N}} $ and $ \{ \hat{\rho}_j \}_{j \in \mathbb{N}} $ are Cauchy sequences in the space $ C([0,T);(\mathscr{M}_2^M(\mathbb{M}^n),\mathcal{W}_2)) $ for every $T>0$, hence they converge to two corresponding curves $ \rho $ and $ \hat{\rho} $, respectively, both in $ C([0,T);(\mathscr{M}_2^M(\mathbb{M}^n),\mathcal{W}_2)) $ for all $ T>0 $. By construction estimates \eqref{smoothing-j} and \eqref{p4-2-ter} (applied to $ \rho \equiv \rho_j$ and $ \hat{\rho} \equiv \hat{\rho}_j $) are preserved at the limit, ensuring the validity of \eqref{smoothing}--\eqref{wass-contr}. We are thus left with proving that $ \rho$ and $\hat{\rho} $ are indeed Wasserstein solutions of \eqref{pme} in the sense of Definition \ref{defsol-w}, i.e.~they comply with \eqref{sol-w1} and \eqref{sol-w2}. Of course it is enough to show it for $ \rho $ only. Since the latter satisfies \eqref{smoothing} and $ \| \rho(t) \|_{L^1(\mathbb{M}^n)} = M $ for all $ t>0 $, the first property in \eqref{sol-w1} is trivially fulfilled. In order to establish the second one and \eqref{sol-w2}, we take advantage of the energy estimate \eqref{eest} applied to each $ \rho \equiv \rho_j $ (with time origin shifted from $ 0 $ to $ \tau \in (0,T) $) combined with \eqref{below-above-prime} and \eqref{smoothing-j}, which yield
\begin{equation}\label{eest-j}
\begin{aligned}
\int_\tau^T \int_{\mathbb{M}^n} \left| \nabla{P(\rho_j)} \right|^2 \d\mathcal{V} \d t + \int_{\mathbb{M}^n} \Psi\!\left( \rho_j(x,T) \right) \d\mathcal{V}(x) \le \, & \frac{c_1}{m+1} \int_{\mathbb{M}^n} \rho_{j}(x,\tau)^{m+1} \, \d\mathcal{V}(x) \\ \le \, & \frac{c_1 \, C^m M}{m+1} \left( \tau^{-\frac{n}{2+n(m-1)}} M^{\frac{2}{2+n(m-1)}} +M \right)^m .
\end{aligned}
\end{equation}
Starting from \eqref{eest-j}, using in a similar way the analogues of \eqref{eest-3-k-last}--\eqref{eest-4-k-last} with $ \rho \equiv \rho_j $ and the time origin shifted from $0$ to $ \tau $, one can reason as in the proof of Proposition \ref{exunimain} to deduce that $ \{ \rho_j \}_{j \in \mathbb{N}} $ converges to $ \rho $ and $ \{ \nabla P(\rho_j) \}_{j \in \mathbb{N}} $ converges to $ \nabla P(\rho) $ weakly in $ L^2(\mathbb{M}^n \times (\tau,T)) $ as $ j \to \infty $, whence the validity of \eqref{sol-w2} upon passing to the limit in the weak formulation satisfied by every $ \rho_j $.

Finally, the uniqueness of Wasserstein solutions is a simple consequence of the uniqueness of weak energy solutions (Proposition \ref{exunimain}) and the continuity in $ (\mathscr{M}_2^M(\mathbb{M}^n),\mathcal{W}_2) $ down to $ t=0 $. Indeed, if $ \rho $ and $ \hat{\rho} $ are two Wasserstein solutions starting from the same initial datum, they can be seen as weak energy solutions starting from the initial data $ \rho(\tau) $ and $ \hat{\rho}(\tau) $, respectively, for every $ \tau>0 $. In particular, there holds 
\begin{equation}\label{wass-last}
\mathcal{W}_2 \!\left(\rho(t),\hat{\rho}(t) \right) \leq \exp\!\left\{ K \, c_1 \, \mathfrak{C}_{m} \left[ \left(tM^{m-1}\right)^{\frac{2}{2+n(m-1)}}  \vee \left(tM^{m-1}\right) \right] \right\} \mathcal{W}_2\! \left( \rho(\tau) , \hat{\rho}(\tau) \right) \qquad \forall t>\tau > 0 \, ,
\end{equation}    
whence $  \mathcal{W}_2 \!\left(\rho(t),\hat{\rho}(t) \right) = 0 $ upon letting $ \tau \downarrow 0 $ in \eqref{wass-last}.
\end{proof}

\subsection{The compact case} \label{compact}

If $ \mathbb{M}^n $ is a \emph{compact} manifold, the construction of the Wasserstein solutions of \eqref{pme} performed in Subsection \ref{weak-sol}  is in fact easier with respect to the one performed in the noncompact case. Indeed, in the proofs of Propositions \ref{exunimain} and \ref{smooth-approx}, there is no need to fill $ \mathbb{M}^n $ with a regular exhaustion $ \{ D_k \}_{k \in \mathbb{N}} $: it is enough to solve the approximate problems (i.e.~the ones associated with the nonlinearity $ P_\varepsilon $) directly on the compact manifold, where integrations by parts are always justified. Moreover, mass conservation is plain because space-constant functions are admissible test functions in the weak formulation \eqref{sol-p2}. The compact-support property established in Proposition \ref{compact-support} is clearly for free.

As concerns the variational framework considered in Subsection \ref{sec:Variational solutions}, some less trivial modifications have to be implemented. That is, one defines the space
\begin{equation*}\label{def:V_E'}
\V_{\mathcal{E}}':= \left\{\ell\in \mathbb{V}': \, |_{\mathbb{V}'}\langle \ell,f \rangle_{\mathbb{V}}|\leq C\sqrt{\mathcal{E}(f)} \ \, \text{for every} \ f\in \mathbb{V} \, , \ \text{for some } C>0 \right\} 
\end{equation*}
endowed with the norm
$$
\left \| \ell \right\|_{\V_{\mathcal{E}}'} := \sup_{f \in \mathbb{V} \, : \ \mathcal{E}(f) \not \equiv 0} \frac{|_{\mathbb{V}'}\langle \ell,f \rangle_{\mathbb{V}}|}{\sqrt{\mathcal{E}(f)}} \, ,
$$
and the space
\begin{equation*}\label{def:H_E'}
\D_{\mathcal{E}}':=\left\{\ell\in \mathbb{D}': \, | _{\mathbb{D}'} \langle \ell,f \rangle_{\mathbb{D}}|\leq C\left\| \Delta f \right\|_{\H} \ \, \text{for every} \ f\in \mathbb{D} \, , \ \text{for some } C>0 \right\},
\end{equation*}
endowed with the norm
$$
\left \| \ell \right\|_{\D_{\mathcal{E}}'} := \sup_{f \in \mathbb{D} \, : \ \Delta f \not \equiv 0} \frac{|_{\mathbb{D}'}\langle \ell,f \rangle_{\mathbb{D}}|}{\left\| \Delta f \right\|_{\H}}  \, .
$$
Upon replacing $\V'$ with $\V_{\mathcal{E}}'$ and $\D'$ with $\D_{\mathcal{E}}'$, respectively, the results stated in Subsections \ref{sec:Variational solutions} and \ref{noncomp} continue to hold. Here we refer again to the machinery developed in \cite{AMS}.
 
We point out that, in view of the standard Dirichlet form we have dealt with, the only reason why $\V_{\mathcal{E}}'$ and $\D_{\mathcal{E}}'$ do not coincide with $\V'$ and $\D'$, respectively, is that in the compact case the kernel of the Dirichlet energy functional $\mathcal{E}:\H\rightarrow [0,+\infty]$ coincides with the set of constant functions, hence is nontrivial. In fact $ \V_{\mathcal{E}}' $ and $\D_{\mathcal{E}}'$ turn out to be identified as those elements of $ \V' $ and $ \D' $, respectively, that vanish on constant functions. On the contrary, in the noncompact case there holds
$$
\mathcal{E}(f) = 0 \ \ \text{and } \ f\in \H \quad \Longrightarrow \quad f = 0
$$
provided $ \mathcal{V}(\mathbb{M}^n) = \infty $, which is always true if \eqref{Sob} is satisfied.

\subsection{Optimality for small times}\label{opt-small}

In what follows, even if the discussion could in principle be made more general, we will restrict ourselves to $ \mathbb{M}^n = \mathbb{H}^n_K $, that is the $n$-dimensional hyperbolic space of constant {sectional} curvature $\mathrm{Sec} = - K$. The key starting point to show optimality is the next delicate result, inspired by \cite[Proposition 6]{Ol}. 

\begin{lemma}\label{lem:asintotico Ollivier}
Let $ K>0 $, $x\in \mathbb{H}^n_K$ and $v$ be a unit tangent vector of $ T_x \, \mathbb{H}^n_K $. Let $r,\delta>0$. Denote by $v^{\perp} \subset T_x \, \mathbb{H}^n_K $ the orthogonal subspace to $v$ and set $\mathrm{E}:=\exp_x v^{\perp}\subset \mathbb{H}^n_K$. Let $w \in v^{\perp} $ be another unit tangent vector. Consider the point $y:=\exp_x \delta v$ and set $ w' := I^x_y (w)  $, where $ I^x_y : T_x \, \mathbb{H}^n_K \to T_y \, \mathbb{H}^n_K $ stands for the parallel-transport map along the geodesic $t\mapsto \exp_x tv$. Then 
\begin{equation}\label{dist-2}
\mathsf{d}\!\left(\exp_yr w' , \mathrm{E} \right)=\delta\!\left(1 + \frac{K}{2} \, r^2 +O\!\left(r^3+\delta r^2\right)\right) \qquad \text{as } (r,\delta)\rightarrow 0 \, .
\end{equation}
More in general, if $ u' \in T_y \, \mathbb{H}^n_K $ is a unit tangent vector, then
\begin{equation}\label{dist-3}
\begin{aligned}
\mathsf{d}\!\left(\exp_yr u' , \mathrm{E} \right) = &  \, \delta\!\left(1 + \frac{K}{2} \,  r^2 \sin^2 \! \alpha(u',I^x_y (v)) +O(r^3+\delta r^2)\right) \\
& + r \cos \alpha(u',I^x_y (v)) + O(r^3) \qquad \text{as } (r,\delta)\rightarrow 0  \, ,
\end{aligned}
\end{equation}
where $ \alpha(\cdot,\cdot) \in[0,\pi] $ denotes the angle between unit vectors in $ T_y \, \mathbb{H}^n_K $. In all
the above identities, the remainder terms $ O(\cdot) $ can be considered independent of the chosen tangent vectors.
\end{lemma}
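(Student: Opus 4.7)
The plan is to realize $\mathbb{H}^n_K$ concretely as a pseudosphere in Minkowski space $\mathbb{R}^{n+1}$, where geodesics, the exponential map and the distance from a point to a totally geodesic hyperplane are all available in closed form; the statement will then reduce to a Taylor expansion in $(r,\delta)$. After rescaling distances by $\sqrt{K}$ it suffices to treat $K=1$, the factor $K/2$ in front of $r^2\sin^2\alpha$ being recovered by undoing the rescaling at the end.

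Work with $\mathbb{H}^n=\{p\in\mathbb{R}^{n+1}:\langle p,p\rangle_M=-1,\ p_0>0\}$, where $\langle p,q\rangle_M=-p_0q_0+\sum_{i\geq 1}p_iq_i$. Standard computations give $\exp_p(sv)=\cosh(s)\,p+\sinh(s)\,v$ for unit $v\in T_p\mathbb{H}^n$, together with $\cosh\mathsf{d}(p,q)=-\langle p,q\rangle_M$ and, for the totally geodesic hyperplane $\mathrm{E}=\mathbb{H}^n\cap n^\perp$ with $n$ a unit spacelike vector, the formula $\sinh\mathsf{d}(p,\mathrm{E})=|\langle p,n\rangle_M|$ (obtained by Minkowski-orthogonal projection onto the linear hyperplane through the origin). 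Using the transitive action of the isometry group on adapted frames, one reduces to $x=e_0$, $v=e_1$, $n=e_1$, so that
\[y=\cosh(\delta)e_0+\sinh(\delta)e_1,\qquad I^x_y(v)=\sinh(\delta)e_0+\cosh(\delta)e_1;\]
moreover, since the reference geodesic is confined to the $(e_0,e_1)$-plane, parallel transport along it is the identity on $v^\perp=\mathrm{span}(e_2,\dots,e_n)$, whence $w'=w$ in the first identity. An arbitrary unit $u'\in T_y\mathbb{H}^n$ then decomposes as $u'=\cos\alpha\,I^x_y(v)+\sin\alpha\,z$ with $z$ unit in $\mathrm{span}(e_2,\dots,e_n)$; reading off the first coordinate of $\exp_y(ru')=\cosh(r)\,y+\sinh(r)\,u'$ and applying the distance-to-hyperplane formula yields the key identity
\[\sinh\mathsf{d}\!\left(\exp_y(ru'),\mathrm{E}\right)=\cosh(r)\sinh(\delta)+\sinh(r)\cos\alpha\cosh(\delta),\]
valid for small enough $r,\delta$ so that the right-hand side is nonnegative. (As a sanity check, $\cos\alpha=1$ reduces this to $\sinh(r+\delta)$ via the addition formula, recovering the obvious $\mathsf{d}=r+\delta$.)

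The remaining step is a Taylor expansion in $(r,\delta)$ using $\cosh(s)=1+s^2/2+O(s^4)$, $\sinh(s)=s+s^3/6+O(s^5)$ and $\mathrm{arcsinh}(x)=x-x^3/6+3x^5/40+O(x^7)$. The resulting expansion reads
\[\mathsf{d}\!\left(\exp_y(ru'),\mathrm{E}\right)=r\cos\alpha+\delta+\tfrac{1}{2}\,\delta\,r^2\sin^2\alpha+\tfrac{1}{6}\,r^3\cos\alpha\sin^2\alpha+O(r^5+\delta r^4+r^3\delta^2+\delta^3 r^2),\]
which matches \eqref{dist-3} once one observes that the error above is controlled by $O(r^3+\delta r^3+\delta^2 r^2)$ as $(r,\delta)\to 0$; the first identity \eqref{dist-2} is the special case $\alpha=\pi/2$, in which all odd-in-$r$ terms vanish and the expansion collapses directly to $\delta(1+r^2/2+O(r^3+\delta r^2))$. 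Uniformity of the remainders in the choice of tangent vectors is automatic, since only the bounded quantities $\sin\alpha$, $\cos\alpha$ and the unit components of $z$ enter. The main obstacle is purely algebraic: organizing the cancellations that wipe out all the pure-$\delta$ contributions (forced by the fact that at $r=0$ one has $\mathsf{d}=\delta$ exactly) and the coefficient of the mixed term $r\delta^4$ (which reduces to $\tfrac{1}{24}-\tfrac{10}{24}+\tfrac{9}{24}=0$), and then checking that the surviving higher-order terms all fit inside the claimed error bound.
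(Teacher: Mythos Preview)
Your argument is correct and takes a genuinely different route from the paper. The paper does not compute anything in the hyperboloid model: it quotes \cite[Section 8]{Ol} verbatim for the perpendicular case \eqref{dist-2}, and for the general direction it decomposes $ru'$ into its $I^x_y(v)$-component and its orthogonal component, first moves along the $v$-direction (landing at signed distance $\delta+r\cos\alpha$ from $\mathrm E$), then applies \eqref{dist-2} to the orthogonal step, and finally invokes Gavrilov's double-exponential formula \cite{GA} to bound the discrepancy between $\exp_y(ru')$ and this two-step point by $O(r^3)$.

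Your approach is more elementary and fully self-contained: by passing to the hyperboloid model you obtain the exact identity $\sinh \mathsf d(\exp_y(ru'),\mathrm E)=\cosh r\,\sinh\delta+\cos\alpha\,\sinh r\,\cosh\delta$, and the lemma becomes a pure Taylor exercise; in particular you recover \eqref{dist-2} and \eqref{dist-3} simultaneously with no appeal to either Ollivier or Gavrilov. The price is that your computation is tied to constant curvature, whereas the paper's decomposition argument would transfer verbatim to any manifold once the perpendicular expansion \eqref{dist-2} is available (as Ollivier proves in general). Since the lemma is only stated and used on $\mathbb H^n_K$, this is no loss here. Your verification that the $r\delta^4$ coefficient cancels (and the automatic vanishing of pure-$\delta$ corrections from $\mathsf d|_{r=0}=\delta$) is exactly what is needed to fit all degree-$5$ terms into the claimed $O(r^3+\delta r^3+\delta^2 r^2)$ remainder.
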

\begin{proof}
The expansion of formula \eqref{dist-2} is exactly what is proved in \cite[Section 8]{Ol}. Consider now a general unit tangent vector $u' \in T_y \, \mathbb{H}^n_K $. Let us denote by $P_{v}(ru')$ and $P_{v^{\perp}}(ru')$ the projections, in the tangent space $T_y \, \mathbb{H}^n_K $, of the vector $ru'$ on the subspace generated by $ I^x_y(v) $ and on its orthogonal subspace $ I^x_y(v^{\perp})$, respectively. Clearly, we have:
\begin{equation}\label{eq:proj}
\left|P_{v}(ru')\right|=\left|r\cos \alpha(u',I^x_y (v))\right| , \qquad \left|P_{v^{\perp}}(ru')\right| = \left|r\sin \alpha(u',I^x_y (v)) \right|
\end{equation}
and 
\begin{equation}\label{eq:proj-2}
P_{v}(ru')\perp P_{v^{\perp}}(ru') \, , \qquad P_{v}(ru')+ P_{v^{\perp}}(ru')=ru'\, .
\end{equation}
In agreement with \cite{GA}, we put
$$
\exp_y(P_{v}(ru')\,,P_{v^{\perp}}(ru')):=\exp_{\exp_y \! P_{v}(ru')}\left[I^y_{\exp_y\!P_{v}(ru')} \!\left( P_{v^{\perp}}(ru') \right) \right] .
$$
Thanks to \eqref{eq:proj} and \eqref{eq:proj-2}, we can apply \eqref{dist-2} with $ y $ replaced by $ \exp_y\!P_{v}(ru') $ and $ rw' $ replaced by the vector $ I^y_{\exp_y\!P_{v}(ru')}\!\left(P_{v^{\perp}}(ru') \right) $ (hence $ \delta $ replaced by $ \delta + r\cos \alpha(u',I^x_y (v)) $ and $ r $ replaced by $ | r\sin \alpha(u',I^x_y (v)) | $), which yields 
\begin{equation}\label{eq:proj-3}
\begin{aligned}
& \mathsf{d}\!\left(\exp_y(P_{v}(ru'),P_{v^{\perp}}(ru')), \mathrm{E} \right) \\
= & \, \delta\!\left(1 + \frac{K}{2} \,  r^2 \sin^2 \! \alpha(u',I^x_y (v)) +O(r^3+\delta r^2)\right) + r \cos \alpha(u',I^x_y (v)) + O(r^3) \, .
\end{aligned}
\end{equation} 
In order to establish \eqref{dist-3}, first of all we take advantage of the triangle inequality, so as to obtain
\begin{equation}\label{eq:proj-4}
\left| \mathsf{d}(\exp_yru',\mathrm{E}) - \mathsf{d}\big(\exp_y(P_{v}(ru')\,,P_{v^{\perp}}(ru')) , \mathrm{E} \big) \right| \le \mathsf{d} \big(\exp_y ru',\exp_y(P_{v}(ru')\,,P_{v^{\perp}}(ru'))\big) \, .
\end{equation}
Still in agreement with \cite{GA}, we denote by $ h_y(P_{v}(ru')\, , P_{v^{\perp}}(ru')) $ the unique vector of $ T_y \, \mathbb{H}^n_K $ such that 
$$
\exp_y\!\left( h_y(P_{v}(ru')\,, P_{v^{\perp}}(ru')) \right) = \exp_y(P_{v}(ru')\,,P_{v^{\perp}}(ru')) \, ; 
$$
on the other hand, by virtue of \cite[formula (3)]{GA} there holds
$$
\left| h_y(P_{v}(ru')\,, P_{v^{\perp}}(ru')) - r u' \right| = O(r^3) \, ,
$$
so that 
\begin{equation}\label{eq:last-formula}
\mathsf{d} \big(\exp_yru',\exp_y(P_{v}(ru')\,,P_{v^{\perp}}(ru'))\big)=O(r^3)
\end{equation}
upon recalling the well-known fact that the Riemannian distance locally can be replaced by the Euclidean distance up an error of order $ O(r^3) $ (see e.g.~\cite[formula (14.1)]{Vil}). Estimate \eqref{dist-3} then follows from \eqref{eq:proj-3}, \eqref{eq:proj-4} and \eqref{eq:last-formula}.
\end{proof}

Taking advantage of Lemma \ref{lem:asintotico Ollivier}, we are able to prove a lower bound for the Wasserstein distance between suitable radially-symmetric probability densities in $ \mathbb{H}^n_K $. 

\begin{lemma}\label{th:optimality}
Let $ K>0 $ and $ \{ \rho^\epsilon \}_{\epsilon \in (0,1)} $ be a family of (continuous) radially-symmetric probability densities in $ \mathbb{H}^n_K $, i.e.~each $ \rho^\epsilon : [0,+\infty) \mapsto [0,+\infty) $ satisfies
\begin{equation}\label{eq-sinh}
\frac{\left| \mathbb{S}^{n-1} \right|}{{K}^{\frac{n-1}{2}}} \int_{0}^{+\infty} \rho^\epsilon(r) \sinh\!\left(\sqrt{K} r\right)^{n-1} \mathrm{d}r = 1 \qquad \forall \epsilon \in (0,1) \,.
\end{equation}
Suppose in addition that there exist some $ \theta \in (0,1) $ and constants $ C_1,C_2>0 $ (independent of $ \epsilon $) such that
\begin{equation}\label{dis: condizione su densita}
\frac{C_1}{\epsilon^n} \, \chi_{[0,\theta \epsilon]}(r) \leq \rho^\epsilon(r) \leq \frac{C_2}{\epsilon^n} \, \chi_{[0,\epsilon]}(r) \qquad \forall \epsilon \in (0,1) \, , 	\quad \forall r \ge 0 \,.
\end{equation} 
Let $ x,y \in \mathbb{H}^n_K $ with $ \mathsf{d}(x,y)=:\delta>0 $ and consider the probability measures $ \mu_x^{\epsilon} $ and $\mu_y^{\epsilon}$ obtained by centering $ \rho^\epsilon $ at $x$ and $y$, respectively. That is, put $ \mu_x^{\epsilon} :=  \rho^\epsilon( \mathsf{d}(\cdot,x) ) \mathcal{V} \in \P(\mathbb{H}^n_K)$ and $  \mu_y^{\epsilon} :=  \rho^\epsilon( \mathsf{d}(\cdot,y) ) \mathcal{V} \in \P(\mathbb{H}^n_K)$. Then there exist constants $ \overline{\delta}=\overline{\delta}( n,K,C_1,C_2,\theta)>0 $ and $ \kappa=\kappa(n,C_1,C_2,\theta)>0 $ such that, if $ \delta \in (0,\overline{\delta}) $, 
\begin{equation}\label{dist-basso-W1}
\mathcal{W}_2(\mu_x^{\epsilon},\mu_y^{\epsilon})\geq \delta\left( 1 + \kappa \, K \, \epsilon^2 \right) \qquad \forall \epsilon \in (0,\overline{\epsilon}) \, ,
\end{equation}
where $ \overline{\epsilon}=\overline{\epsilon}(\delta,n,K,C_1,C_2,\theta) \in (0,1) $.
\end{lemma}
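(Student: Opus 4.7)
The plan is to estimate $\mathcal{W}_2$ from below via $\mathcal{W}_2 \ge \mathcal{W}_1$ (see \eqref{order-wass}) and to exhibit one well-chosen $1$-Lipschitz test function in the Kantorovich--Rubinstein duality \eqref{dualita W1}. Let $v \in T_x\,\mathbb{H}^n_K$ be the unit tangent from $x$ to $y$, so that $y = \exp_x(\delta v)$, and let $\mathrm{E} = \exp_x v^\perp$ be the totally geodesic hyperplane perpendicular to $v$ at $x$, exactly as in Lemma \ref{lem:asintotico Ollivier}. I define $f:\mathbb{H}^n_K\to\mathbb{R}$ to be the signed distance to $\mathrm{E}$, with positive sign on the side containing $y$; then $f$ is $1$-Lipschitz, and the task reduces to bounding from below the quantity $\int f\,d\mu_y^\epsilon - \int f\,d\mu_x^\epsilon$.

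For the integral against $\mu_x^\epsilon$, the normal-coordinate expansion at $x$ (essentially the specialization of \eqref{dist-3} to $\delta=0$, which is also a standard consequence of $\mathrm{E}$ being totally geodesic through $x$) gives $f(\exp_x ru)=r\cos\alpha(u,v)+O(r^3)$ uniformly in the unit tangent $u\in T_x\,\mathbb{H}^n_K$. Radial symmetry of $\mu_x^\epsilon$ about $x$ kills the linear-in-$r$ term, and the $O(r^3)$ contribution is controlled via \eqref{dis: condizione su densita} together with the hyperbolic volume element $|\mathbb{S}^{n-1}|\sinh(\sqrt{K}r)^{n-1}/K^{(n-1)/2}\,dr\,d\omega$, yielding $\int f\,d\mu_x^\epsilon = O(\epsilon^3)$. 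For the integral against $\mu_y^\epsilon$ I will require $\epsilon<\delta$, which will be enforced by the final choice of $\overline{\epsilon}$ and guarantees that the whole support of $\mu_y^\epsilon$ lies on the positive side of $\mathrm{E}$, so that $f$ there agrees with $\mathsf{d}(\cdot,\mathrm{E})$. Inserting \eqref{dist-3} and using that the angular means over the unit sphere of $T_y\,\mathbb{H}^n_K$ give $0$ for $\cos\alpha(u',I^x_y v)$ and $(n-1)/n$ for $\sin^2\alpha(u',I^x_y v)$, one obtains
\[
\int f\,d\mu_y^\epsilon \;\ge\; \delta \,+\, \frac{K\delta(n-1)}{2n}\int \mathsf{d}(z,y)^2\,d\mu_y^\epsilon(z) \,-\, C\bigl(\delta^2\epsilon^2 + \delta\epsilon^3 + \epsilon^3\bigr),
\]
with $C=C(n,C_2)$. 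The lower estimate in \eqref{dis: condizione su densita}, combined with the expansion $\sinh(\sqrt{K}r)^{n-1}=K^{(n-1)/2}r^{n-1}(1+O(Kr^2))$, forces $\int \mathsf{d}(z,y)^2\,d\mu_y^\epsilon(z) \ge c\,\epsilon^2$ with $c=c(n,C_1,\theta)>0$, as soon as $\epsilon$ is small enough that the sinh correction stays bounded.

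Putting the two halves together gives
\[
\mathcal{W}_2(\mu_x^\epsilon,\mu_y^\epsilon) \;\ge\; \delta \,+\, c'\,K\,\delta\,\epsilon^2 \,-\, C\bigl(\delta^2\epsilon^2 + \delta\epsilon^3 + \epsilon^3\bigr), \qquad c' = \frac{c(n-1)}{2n}.
\]
The crux of the argument, and its main technical obstacle, is to absorb the three error terms into the positive gain $c'\,K\,\delta\,\epsilon^2$: after dividing by $\delta\epsilon^2$, one needs $\delta + \epsilon + \epsilon/\delta$ to be small relative to $K$. Picking $\overline{\delta}=\overline{\delta}(n,K,C_1,C_2,\theta)$ sufficiently small handles the contributions $\delta^2\epsilon^2$ and $\delta\epsilon^3$; then, for each admissible $\delta$, choosing $\overline{\epsilon}=\overline{\epsilon}(\delta,n,K,C_1,C_2,\theta)$ much smaller than $K\delta$ (and also smaller than $\delta$) handles the term $\epsilon^3$ while enforcing the auxiliary assumption $\epsilon<\delta$. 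This produces \eqref{dist-basso-W1} with $\kappa=c'/2$, which depends only on $n,C_1,C_2,\theta$.
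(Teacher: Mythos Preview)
Your proof is correct and follows essentially the same route as the paper: both use $\mathcal{W}_2\ge\mathcal{W}_1$, the Kantorovich--Rubinstein duality with the signed distance to the totally geodesic hyperplane $\mathrm{E}$ as test function, and the expansion of Lemma~\ref{lem:asintotico Ollivier} to extract the curvature gain $\frac{K(n-1)}{2n}\int r^2\,d\mu_y^\epsilon$. The only noteworthy differences are technical: the paper observes that $\int f\,d\mu_x^\epsilon=0$ \emph{exactly}, by the reflection isometry across $\mathrm{E}$ (which fixes $x$ and sends $f\mapsto -f$), rather than your $O(\epsilon^3)$ bound; and for the integral at $y$ the paper first pushes forward a Euclidean renormalization of $\rho^\epsilon$ via $\exp_y\circ I^x_y$ and controls the Radon--Nikodym error, whereas you integrate directly in hyperbolic polar coordinates at $y$. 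Your direct computation is arguably cleaner, while the paper's symmetry argument for the $x$-integral is sharper; both lead to the same final estimate with the same dependence of the constants.
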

\begin{proof}
For simplicity we assume $ K=1 $ and set $ \mathbb{H}^n := \mathbb{H}^n_{1} $, since the modifications in order to deal with a general $ K>0 $ are inessential. So, let $v \in T_x \, \mathbb{H}^n$ be the unit vector such that $\exp_x \delta v =y$. Let $i:\mathbb{R}^n\rightarrow T_x\,\mathbb{H}^n$ be an isometric isomorphism that preserves orientation. As in Lemma \ref{lem:asintotico Ollivier}, we denote by $I^x_y$ the parallel-transport map between $T_x\,\mathbb{H}^n$ and $T_y\,\mathbb{H}^n$ along the geodesic $ t \mapsto \exp_x t v $. We then define the maps $\varphi_x:\mathbb{R}^n\rightarrow \mathbb{H}^n$ and $\varphi_y:\mathbb{R}^n\rightarrow \mathbb{H}^n$ as follows:
$$
\varphi_x := \exp_x \circ \, i \, , \qquad \varphi_y := \exp_y \circ \, I^x_y \circ i \, .
$$
First of all, we normalize $ \rho^\epsilon $ in such a way that it is a probability measure on $ \mathbb{R}^n $, namely we set
$$
{\rho}^\epsilon_E(r) := h(\epsilon) \, \rho^\epsilon(r) \qquad \forall r \ge 0
$$
with 
\begin{equation}\label{def-h}
h(\epsilon) := \frac{1}{1- \left| \mathbb{S}^{n-1} \right| \int_0^\epsilon \rho^\epsilon(r) \left( \sinh(r)^{n-1} - r^{n-1} \right) \mathrm{d}r} = 1+O(\epsilon^2) \, ,
\end{equation} 
where we used \eqref{eq-sinh} and \eqref{dis: condizione su densita}. Hence we put $ \mu^\epsilon_E := {\rho}^\epsilon_E(|\cdot|) \mathscr{L}^n $, the symbol $\mathscr{L}^n$ standing for the Lebesgue measure on $ \mathbb{R}^n $. Now we push forward the probability measure $ \mu^\epsilon_E $ on $ \mathbb{H}^n $ by means of the maps $ \varphi_x $ and $ \varphi_y $: 
\begin{equation}\label{pushforward}
\hat{\mu}_x^\epsilon := (\varphi_x)_{\sharp} \, \mu^\epsilon_E \, , \qquad \hat{\mu}_y^\epsilon := (\varphi_y)_{\sharp} \, \mu^\epsilon_E \, .
\end{equation} 
It is possible to show that $ \hat{\mu}_x^\epsilon $ and $\hat{\mu}_y^\epsilon$ are absolutely continuous w.r.t.~to $ {\mu}_x^\epsilon $ and ${\mu}_y^\epsilon$, respectively, in a quantitative way; more precisely, there exist bounded functions $ f_x^\epsilon : \mathbb{H}^n \to \mathbb{R} $ and $ f_y^\epsilon : \mathbb{H}^n \to \mathbb{R} $ such that
\begin{equation}\label{radon-nykodim} 
\mathrm{d} {\mu}_x^\epsilon = \left( 1 + \epsilon^2 f_x^\epsilon \right) \mathrm{d} \hat{\mu}_x^\epsilon \, , \qquad \mathrm{d} {\mu}_y^\epsilon = \left( 1 + \epsilon^2 f_y^\epsilon \right) \mathrm{d} \hat{\mu}_y^\epsilon
\end{equation}
and 
\begin{equation}\label{radon-nykodim-zero} 
\int_{\mathbb{H}^n} f_x^\epsilon \,  \mathrm{d} \hat{\mu}_x^\epsilon = \int_{\mathbb{H}^n} f_y^\epsilon \,  \mathrm{d} \hat{\mu}_y^\epsilon = 0 \, .
\end{equation}
Indeed, by construction $ \varphi_x $ and $ \varphi_y $ preserve radial lengths and angles. As a consequence, both $ \hat{\mu}_x^\epsilon $ and $ \hat{\mu}_y^\epsilon $ are represented on $ \mathbb{H}^n $ by the same radial density $ \hat{\rho}^\epsilon $ via the relation 
\begin{equation*}\label{ch-var}
\hat{\rho}^\epsilon(r) \, \sinh(r)^{n-1} = \rho_E^\epsilon(r) \, r^{n-1} = h(\epsilon) \, \rho^\epsilon(r) \, r^{n-1} \qquad \forall r \in (0,\epsilon) \, ,
\end{equation*}
whence 
\begin{equation*}\label{ch-var-bis}
\begin{gathered}
\rho^\epsilon(r) =  \frac{\sinh(r)^{n-1}}{h(\epsilon)\,r^{n-1}} \, \hat{\rho}^\epsilon(r) = \left( 1 + \epsilon^2 \, \frac{\sinh(r)^{n-1} - h(\epsilon) \, r^{n-1}}{\epsilon^2 \, h(\epsilon) \, r^{n-1}} \right) \hat{\rho}^\epsilon(r) =: \left( 1 + \epsilon^2 f^\epsilon(r) \right) \hat{\rho}^\epsilon(r)  \\ \forall r \in (0,\epsilon)
\end{gathered}
\end{equation*}
and therefore \eqref{radon-nykodim} holds with $ f_x^\epsilon(\cdot) = f^\epsilon(\mathsf{d}(\cdot,x)) $ and $ f_y^\epsilon(\cdot) = f^\epsilon(\mathsf{d}(\cdot,y)) $. Note that, in view of \eqref{def-h} and a standard Taylor expansion of $ \sinh(r) $, the function $ f^\epsilon $ is uniformly bounded by a constant that depends only on $ n $ and $C_2$. On the other hand, identity \eqref{radon-nykodim-zero} just follows by the fact that $ {\mu}_x^\epsilon $, $\hat{\mu}_x^\epsilon$, $ {\mu}_y^\epsilon $, $ \hat{\mu}_y^\epsilon $ are all probability measures.
  
Let $ \mathrm{E}_0 $ and $ \mathrm{E}_1 $ be the two disjoint, open, connected components in $ \mathbb{H}^n $ separated by $ \mathrm{E} $, the latter set being defined as in Lemma \ref{lem:asintotico Ollivier}. Assume for convenience that $ \mathrm{E}_1 $ contains the point $y$. In order to prove \eqref{dist-basso-W1}, as in \cite[Section 8]{Ol} we choose the following $1$-Lipschitz function ${g}:\mathbb{H}^n\rightarrow \mathbb{R}$: 
$$
{g}(z):=
\begin{cases} & \mathsf{d}(z,\mathrm{E}) \qquad \text{if } z \in \mathrm{E}_1 \, , \\
-\hspace{-3mm}& \mathsf{d}(z,\mathrm{E}) \qquad \text{otherwise} \, .
\end{cases}
$$
Upon recalling the duality formula \eqref{dualita W1} along with \eqref{order-wass} and \eqref{radon-nykodim}, we obtain: 
\begin{equation}\label{W1a}
\begin{aligned}
\mathcal{W}_2(\mu_x^\epsilon,\mu_y^\epsilon) \geq & \, \mathcal{W}_1(\mu_x^\epsilon,\mu_y^\epsilon) \\
\geq & \int_{\mathbb{H}^n} g(z) \left( 1 + \epsilon^2 f_y^\epsilon(z) \right) \mathrm{d} \hat{\mu}_y^\epsilon(z)-\int_{\mathbb{H}^n} g(z) \left( 1 + \epsilon^2 f_x^\epsilon(z) \right) \mathrm{d} \hat{\mu}_x^\epsilon(z) \, .
\end{aligned}
\end{equation}
Since $ {\mu}_x^\epsilon $ is represented by a radially-symmetric density about $x$ and $ \mathbb{H}^n $ also has a radially-symmetric structure (about any point), by the definition of $g$ it is not difficult to check that in fact 
\begin{equation}\label{W1b}
\int_{\mathbb{H}^n} g(z) \, \mathrm{d}\mu_x^\epsilon(z) = \int_{\mathbb{H}^n} g(z) \left( 1 + \epsilon^2 f_x^\epsilon(z) \right) \mathrm{d} \hat{\mu}_x^\epsilon(z)  = 0 \, ,
\end{equation}
therefore we can focus on the first integral. By virtue of \eqref{pushforward}, we have: 
\begin{equation}\label{W1c}
\int_{\mathbb{H}^n} g(z) \left( 1 + \epsilon^2 f_y^\epsilon(z) \right) \mathrm{d} \hat{\mu}_y^\epsilon(z) = \int_{\mathbb{R}^n} {g}\!\left(\varphi_y(q)\right)  \left( 1 + \epsilon^2 f_y^\epsilon(\varphi_y(q)) \right) \mathrm{d}\mu^\epsilon_E(q) \, ;
\end{equation}
on the other hand, thanks to \eqref{dist-3} and the fact that $ \mu^\epsilon_E $ is supported in the Euclidean ball $ B_\epsilon $ centered at the origin, we can write 
\begin{equation}\label{W1d}
\begin{aligned}
& \int_{\mathbb{R}^n} {g}\!\left(\varphi_y(q)\right)  \left( 1 + \epsilon^2 f_y^\epsilon(\varphi_y(q)) \right) \mathrm{d}\mu^\epsilon_E(q) \\
= &  \int_{B_\epsilon } \left[ \delta\!\left(1+\frac{|q|^2 - \left( q \cdot p_v \right)^2 }{2}+O(|q|^3+\delta |q|^2)\right) + q \cdot p_v + O(|q|^3) \right] \left( 1 + \epsilon^2 f^\epsilon(|q|) \right) \rho^\epsilon_E(|q|) \, \mathrm{d}q \, , \\
\end{aligned}
\end{equation}
where $ p_v := i^{-1}\, (v) $. Clearly, by symmetry, the middle term involving $ q \cdot p_v $ vanishes when integrated against any radial density. Hence, thanks to \eqref{dis: condizione su densita} (still the right-hand inequality) and \eqref{radon-nykodim-zero}, from \eqref{W1d} we can infer that 
\begin{equation*}\label{W1e}
\int_{\mathbb{R}^n} {g}\!\left(\varphi_y(q)\right)  \left( 1 + \epsilon^2 f_y^\epsilon(\varphi_y(q)) \right) \mathrm{d}\mu^\epsilon_E(q) = \delta \left[ 1 + \frac{n-1}{2n} \int_{B_\epsilon} |q|^2 \rho^\epsilon_E(|q|) \, \mathrm{d}q + O\!\left(\epsilon^3+\delta \epsilon^2\right) \right] + O(\epsilon^3) \, .
\end{equation*}
In view of the left-hand inequality in \eqref{dis: condizione su densita}, there exists a constant $ \kappa > 0 $ as in the statement such that 
\begin{equation}\label{W1f}
\int_{\mathbb{R}^n} {g}\!\left(\varphi_y(q)\right)  \left( 1 + \epsilon^2 f_y^\epsilon(\varphi_y(q)) \right) \mathrm{d}\mu^\epsilon_E(q) \ge \delta \left[ 1 + 3 \, \kappa \, \epsilon^2 + O\!\left(\epsilon^3+\delta \epsilon^2\right) \right] + O(\epsilon^3) \, .
\end{equation}
Upon collecting \eqref{W1a}, \eqref{W1b}, \eqref{W1c} and \eqref{W1f}, the thesis follows by choosing $ \overline{\delta} $ so small that $ \left| O(\delta \epsilon^2) \right| \le \kappa \, \epsilon^2 $ for all $ \delta \in (0,\overline{\delta}) $ and $ \overline{\epsilon} $ so small that $ \left| \delta O(\epsilon^3) \right| + \left| O(\epsilon^3) \right| \le \kappa \, \delta \, \epsilon^2 $ for all $ \epsilon \in (0,\overline{\epsilon}) $ and all $ \delta \in (0,\overline{\delta}) $. 
\end{proof}

\begin{proof}[Proof of Theorem \ref{optimal}]
Let $ M=1 $. Thanks to \cite[Theorem 1.1]{VazHyp}, we know that $ \rho(\cdot,t) $ and $ \hat{\rho}(\cdot,t) $ are represented by the same radial density centered at $x$ and $y$, respectively. That is, $ \rho(\cdot,t) = \tilde{\rho}(\mathsf{d}(\cdot,x),t) $ and $ \hat{\rho}(\cdot,t) = \tilde{\rho}(\mathsf{d}(\cdot,y),t) $ for a suitable continuous, bounded, radially-nonincreasing  family of densities $ (r,t): \mathbb{R}^+ \times \mathbb{R}^+ \mapsto \tilde{\rho}(r,t) $. First of all we observe that, since $ \mathbb{H}^n_K $ is a Cartan-Hadamard manifold, $ \tilde{\rho}(r,t) $ lies below the \emph{Euclidean} Barenblatt solution $ \tilde{\rho}_E(r,t) $, see \cite[Remark 2.12]{GMPrm} and \cite[Introduction]{VazHyp}. This means that there exist constants $ D=D(n,m)>0 $ and $ k=k(n,m)>0 $ such that 
\begin{equation}\label{eq-barenblatt-euclidee}
\tilde{\rho}(r,t) \le t^{-\frac{n}{2+n(m-1)}} \left( D - k \, r^2 \, t^{-\frac{2}{2+n(m-1)}}  \right)_+^{m-1} =: \tilde{\rho}_E(r,t) \qquad \forall (r,t) \in \mathbb{R}^+ \times \mathbb{R}^+ \, .
\end{equation}
In particular, 
\begin{equation}\label{eq-barenblatt-euclidee-bis}
\tilde{\rho}(r,t) \le \frac{D^{m-1}}{t^{\frac{n}{2+n(m-1)}}} \, \chi_{\left[0, A(t)\right]}(r) \quad \forall (r,t) \in \mathbb{R}^+ \times \mathbb{R}^+ \, , \qquad A(t):= \sqrt{\tfrac{D}{k}} \, t^{\frac{1}{2+n(m-1)}} \, .
\end{equation} 
Now let 
$$ I(t) := \inf_{ r \in \left[0 , \frac{A(t)}{2}  \right] } \tilde{\rho}(r,t) \qquad \forall t>0 \,. $$ 
By mass conservation, \eqref{eq-barenblatt-euclidee} and the fact that $ \tilde{\rho}(\cdot,t)  $ is nonincreasing, we can deduce the following:
\begin{equation}\label{ineq-mass}
\begin{aligned}
 \frac{1}{\left| \mathbb{S}^{n-1} \right|} = & \, K^{-\frac{n-1}{2}} \int_0^{\frac{A(t)}{2}} \tilde{\rho}(r,t) \sinh\!\left( \sqrt{K} r \right)^{n-1} \mathrm{d}r + K^{-\frac{n-1}{2}} \int_{\frac{A(t)}{2}}^{A(t)} \tilde{\rho}(r,t) \sinh\!\left( \sqrt{K} r \right)^{n-1} \mathrm{d}r \\
 \le & \, K^{-\frac{n-1}{2}} \int_0^{\frac{A(t)}{2}} \tilde{\rho}_E(r,t) \sinh\!\left( \sqrt{K} r \right)^{n-1} \mathrm{d}r  + K^{-\frac{n-1}{2}} \, I(t)  \, \int_{\frac{A(t)}{2}}^{A(t)} \sinh\!\left( \sqrt{K} r \right)^{n-1} \mathrm{d}r \\
  = & \left[ \frac{\lambda}{\left| \mathbb{S}^{n-1} \right|}  + I(t) \, C \, t^{\frac{n}{2+n(m-1)}} \right]  \left[ 1+ O\!\left( t^{\frac{2}{2+n(m-1)}} \right) \right] ,
\end{aligned}
\end{equation}
where
$$
\lambda := \left| \mathbb{S}^{n-1} \right| \int_0^{\frac{1}{2}\sqrt{\frac{D}{k}}} \tilde{\rho}_E(r,1) \, r^{n-1} \, \mathrm{d}r < 1 \, , \qquad C :=  \int_{\frac{1}{2}\sqrt{\frac{D}{k}}}^{\sqrt{\frac{D}{k}}} r^{n-1} \, \mathrm{d}r > 0 \, .
$$
Note that in the last passage we have exploited the scaling properties of $ \tilde{\rho}_E $. From \eqref{ineq-mass} and the definition of $ I(t) $, it is therefore apparent that there exist constants $ D_1 = D_1(n,m)>0 $ and $ t_1 = t_1 (n,K,m)> 0 $ such that 
\begin{equation}\label{eq-barenblatt-euclidee-below}
\tilde{\rho}(r,t) \ge \frac{D_1}{t^{\frac{n}{2+n(m-1)}}} \, \chi_{\left[0, \frac{A(t)}{2} \right]}(r) \qquad \forall (r,t) \in \mathbb{R}^+ \times (0,t_1) \, .
\end{equation} 
Hence, in order to estimate $ \mathcal{W}_2 \!\left(\rho(t),\hat{\rho}(t) \right) $ from below, we are in position to apply Lemma \ref{th:optimality}. Indeed, if we set $ \epsilon \equiv A(t) $ and $ \rho^\epsilon \equiv \tilde{\rho}(\cdot,t) $, then by virtue of \eqref{eq-barenblatt-euclidee-bis} and \eqref{eq-barenblatt-euclidee-below} we can claim that \eqref{dis: condizione su densita} is satisfied with $ \theta = 1/2  $ and suitable positive constants $ C_1,C_2 $ depending only on $ n $ and $m$, provided $ \epsilon < A(t_1) $ (condition \eqref{dis: condizione su densita} is required to hold for $ \epsilon \in (0,1) $ only for convenience). Estimate \eqref{optimal-delta} for $ M=1 $ is just \eqref{dist-basso-W1}, upon exploiting the above relation between $t$ and $\epsilon$, along with the trivial identity $ \mathcal{W}_2 \!\left(\delta_x,\delta_y\right) =  \mathsf{d}(x,y) $. 

In order to deal with a general mass $ M>0 $, it is enough to notice that $ M \rho(tM^{m-1}) $ and $ M \hat{\rho}(tM^{m-1}) $ are still solutions of \eqref{pme} starting from $ M \delta_x $ and $ M \delta_y $, respectively (recall that $ \cW^2 $ is proportional to the mass).
\end{proof}

\bigskip 

\noindent\textbf{Acknowledgment.} The second author is partially supported by the GNAMPA Project 2018 (Italy) ``Problemi Analitici e Geometrici Associati a EDP Non-Lineari Ellittiche e Paraboliche''. 
The first and third author are supported by the GNAMPA Project 2019 (Italy) ``Trasporto Ottimo per Dinamiche con Interazione''. The authors are grateful to Prof.~Giuseppe Savar\'e for fruitful discussions.
They would also like to thank the anonymous referee for his/her very careful reading of the paper and for the valuable suggestions.

\medskip

\end{document}